\documentclass[11pt]{article}

\usepackage[margin=1in]{geometry}
\textheight=700pt
\usepackage{upquote}
\usepackage{amsfonts,amsmath,amssymb,amsthm}
\usepackage{longtable}
\usepackage{xcolor}
\usepackage{booktabs}
\usepackage{multirow}
\usepackage{hyperref}

\usepackage{graphicx}
\graphicspath{ {images/}}
\usepackage{epstopdf}
\usepackage{caption}
\usepackage{subcaption}
\newtheorem{theorem}{Theorem}[section]
\newtheorem{lemma}{Lemma}[section]
\newtheorem{problem}{Problem}
\theoremstyle{definition}

\numberwithin{equation}{section}

\title{\textbf{Robust numerical schemes for singularly perturbed delay parabolic convection diffusion problems with degenerate coefficient }}
\author{Pratima Rai\thanks{ Department of Mathematics, University of Delhi, Delhi-110007, India.
		({\tt prai@maths.du.ac.in}).} ~\thanks{Corresponding author.}
	,~	Swati Yadav\thanks{Department of Mathematics,
		University of Delhi, Delhi-110007, India.
		({\tt swatiyadav3317@gmail.com}).}}
\date{\today}

\begin{document}
	
	\maketitle
	
	\begin{abstract}
		This article studies a dirichlet boundary value problem for singularly perturbed time delay convection diffusion equation with degenerate coefficient. A priori explicit bounds are established on the solution and its derivatives. For asymptotic analysis of the spatial derivatives the solution is decomposed into regular and singular parts. To approximate the solution a numerical method is considered which consists of  backward Euler scheme for time discretization on uniform mesh and a  combination of midpoint upwind and central difference scheme for the spatial discretization on modified Shishkin mesh.   Stability analysis is carried out,  numerical results are presented and comparison is done with upwind scheme on uniform mesh as well as  upwind scheme on Shishkin mesh to  demonstrate the effectiveness of the proposed method.
		The convergence obtained in practical satisfies the theoretical predictions.

		\textit{Keywords} :  Singular perturbation, parabolic  delay differential equations, degenerate coefficient, hybrid scheme, Shishkin mesh, extrapolation.\\
		\textbf{MSC classification 2010:} 65N12, 65N30, 65N06, 65N15.
	\end{abstract}
	
	\section{Introduction}
	Singularly perturbed parabolic delay differential equations (SPPDDEs) plays a crucial role in mathematical modeling of various real life phenomena which takes into consideration the past history of the system along with its present state. The delay or lag represent incubation period, gestation time, transport delays etc. The solution and dynamics of singularly perturbed delay partial differential equations are completely different from those of the partial differential equations without time delay.
	
	A characteristic example of SPPDDEs is the following equation arising in numerical control modeling a furnace
	used to process metal sheets~\cite{ansari}
	\begin{align}
	(\partial_t u-\varepsilon \partial_x^2u)(x,t)=
	v(g(u(x, t-\tau)))\partial _xu(x,t)+c[f(u(x,t-\tau))-u(x,t)],\quad (x,t)\in D.
	\end{align}
	Here, $u$ is the temperature distribution of the metal sheet, which is moving with velocity $v$ and heated by a source given by function $f$; both $v$ and $f$ are adapted dynamically by a controlling device monitoring the current temperature distribution. Since speed of the controller is finite it induces a fixed delay of length $\tau$. 
	
	Last one decade has witnessed a growing interest in the numerical study of singularly perturbed  parabolic delay differential equations (SPPDDEs). However, uniformly convergent
	numerical methods  are not much developed for SPPDDEs. Numerical study of SPPDDEs for the class of reaction diffusion equations was initiated by Ansari et. al. \cite{ansari}.  The authors used classical finite difference scheme on piecewise uniform Shishkin mesh.  Gowrisankar and Natesan ~\cite{gowrishankar2} used layer adapted meshes obtained via equidistributing a monitor function for the numerical solution of singularly perturbed parabolic delay differential reaction diffusion problems.  Sunil and Mukesh~\cite{sunil1} constructed a hybrid scheme consistng of HODIE type on generalized Shishkin mesh in spatial direction and implicit Euler scheme on uniform mesh in time direction for the numerical approximation of  singularly perturbed parabolic delay differential reaction diffusion problems. Joginder et al.~\cite{Joginder} designed and analyzed a domain decomposition method for the numerical solution of SPPDDEs. 
	
	For work on convection diffusion problem for SPPDDEs one can refer to \cite{das1,das2,gowrishankar1, Kaushik1, Kaushik2, Salama1}.  Aditya and Manju~\cite{Kaushik1} analyzed the weighted difference approximations on piecewise uniform mesh for singularly perturbed delay differential convection diffusion problems and established that the proposed scheme is $L_2^h$ stable.  Gowrisankar and Natesan~\cite{gowrishankar1} used layer adapted meshes based on equi-distribution of a monitor function for the  numerical solution of SPPDDEs of convection diffusion type. Abhishek and Natesan~\cite{das1} proposed a hybrid scheme on Shishkin mesh for the numerical solution of convection diffusion problem for SPPDDEs which is almost second order accurate in space and first order in time direction. In \cite{das2} the authors applied Richardson extrapolation on simple upwind scheme to obtain almost second order of convergence in space direction and second order of convergence in time direction for SPPDDEs of convection diffusion type. The authors~\cite{Salama1}  derived a higher order uniformly convergent method which is second order accurate in time and fourth order accurate in space for the numerical solution of singularly perturbed parabolic delay convection diffusion problems.    
	
	To the best of our knowledge, all the literature on the numerical solution of SPPDDEs of convection type  is restricted to the case when the convection coefficient has same sign throughout the domain. Hence, a very first attempt has been made here to construct a parameter uniform numerical scheme for such a class of problem with degenerating convection coefficient.    
	
	We consider the following problem on a rectangular domain:
	\begin{align}
	\label{1.1}
	L_{\varepsilon}u(x,t)= \left( \varepsilon \frac{\partial^2{u}}{\partial{x^2}}+a \frac{\partial{u}}{\partial{x}}-b\frac{\partial{u}}{\partial{t}}-c u\right) (x,t)= e(x,t) u(x, t- \tau)+f(x,t),
	\end{align}
	where $0<\varepsilon \leq 1$, $\tau > 0$, $(x,t)\in Q = \Omega \times (0,T] = (0,1) \times (0,T]$, $\overline{Q}=[0,1] \times [0,T]$, $T$ is some finite time such that $T=k \tau$ for some integer $k>1$, $ \Gamma =  \Gamma_{b}\cup \Gamma_{l}\cup \Gamma_{r}$, with the interval and boundary conditions given by
	\begin{align}
	\label{1.2}
	u(x,t) &= s(x,t)\hspace{0.3cm} on \hspace{0.3cm} \Gamma_{b} = \lbrace (x,t) : 0\leq x \leq 1, ~ -\tau \leq t \leq 0 \rbrace,\nonumber \\
	u(0,t) &= q_{0}(t)\hspace{0.3cm} on \hspace{0.3cm} \Gamma_{l} = \lbrace (0,t) : 0\leq t \leq  T\rbrace,\nonumber \\
	u(1,t) &= q_{1}(t)\hspace{0.3cm} on \hspace{0.3cm} \Gamma_{r} = \lbrace (1,t) : 0\leq t\leq T\rbrace.
	\end{align}
	The coefficients $a(x,t)$, $ b(x,t)$, $ d(x,t) $ and $ f(x,t) $ are sufficiently smooth functions such that 
	\begin{align}\label{1.3}
	a(x,t) &= a_{0}(x,t)x^p ,\hspace{0.3cm} p\geq 1, \hspace{0.3cm} \forall~ (x,t)\in \overline{Q},\nonumber \\
	a_0(x,t) &\geq \alpha > 0, \hspace{0.3cm} \forall~ (x,t) \in \overline{Q},\nonumber \\
	b(x,t) &\geq \beta > 0 , \hspace{0.3cm} \forall~ (x,t) \in \overline{Q},\nonumber \\
	c(x,t) & \geq \gamma > 0 ,\hspace{0.3cm}\forall~ (x,t) \in \overline{Q}, \nonumber \\
	e(x,t) & \geq 0 ,\hspace{0.3cm}\forall~ (x,t) \in \overline{Q}.
	\end{align}
	Problem (\ref{1.1})-(\ref{1.3}) covers the multiple turning points for $p>1$. The solution of the problem $(\ref{1.1})$-$(\ref{1.3})$ exhibit a parabolic boundary layer of width $O(\sqrt{\varepsilon})$ in the neighbourhood of the left boundary $\Gamma_l$ as all the characteristic curves of the reduced problem are parallel to the boundary $\Gamma_{r}$. \newline
	The existence of the unique solution of the Problem (\ref{1.1})-(\ref{1.3}) is guaranteed under the assumption that the problem data is H$\ddot{o}$lder continuous, sufficiently smooth and satisfy appropriate compatibility conditions at the corner points (0,0), (1,0), $(0,-\tau)$ and $(1,-\tau)$.\\
	In this article our focus is to develop a higher order robust numerical scheme for the solution of SPPDDEs with multiple degeneracy. The article is designed as follows. In Section 2, analytical aspects of the continuous problem are discussed and a priori estimates are established on the exact solution and its derivatives. In an attempt to design a higher order scheme, in Section 3, the considered problem is discretized by the hybrid scheme on a piecewise-uniform modified Shishkin mesh in the space direction and the implicit Euler method on uniform mesh in the time direction. Stability and error analysis have been carried out for the proposed scheme to establish $\varepsilon$-uniform convergence of $O(N^{-2}L^{2} + \Delta t)$. In Section 4, we combine the hybrid scheme with the Richardson extrapolation to increase the order of convergence from $O(N^{-2}L^{2}+ \Delta t)$ to $O(N^{-2}L^{2}+ (\Delta t)^2)$. Numerical experiments are conducted in Section 5 to verify the theoretical results and illustrate the efficiency of the proposed schemes as compared to upwind scheme on uniform mesh as well as upwind scheme on Shishkin mesh.\\
	\textbf{Notations:} Throughout this article, we use C as a generic  positive constant independent of $\varepsilon$ and the mesh parameters. All the functions defined on a domain Q are measured in supremum norm, denoted by
	\begin{align*}
	\| f \|_{\overline{Q}} = \sup \limits_{x \in \overline{Q}} |f(x)|.
	\end{align*}      

\section{A Priori Bounds}
In this section, a priori bounds for the solution $u(x,t)$ of the problem (\ref{1.1})-(\ref{1.3}) and its derivatives are estimated on the domain $\overline{Q}$. We derive some a priori bounds using the method of steps and the minimum principle for the opertaor $L_{\varepsilon}$. The delay term $u(x,t-\tau)$ is a known function $s(x,t-\tau)$ for $(x,t-\tau) \in [0,1] \times [0,\tau]$ and hence the RHS of (\ref{1.1}) becomes $e(x,t) s(x,t-\tau) + f(x,t)$. This gives us the solution $u(x,t)$ for $(x,t) \in [0,1] \times [0,\tau]$. Using this we can compute the solution $u(x,t)$ for $(x,t)\in [0,1] \times [\tau,2 \tau]$ and so on. Hence, using the method of steps the existence and uniqueness results can be established for all $(x,t) \in \overline{Q}$. The operator $L_{\varepsilon}$ satisfies the following minimum principle.
\begin{lemma}[Minimum Principle]
	\label{lm2.1}
	Let $w \in C^{2,1}(\overline{Q})$. If $w(x,t) \geq 0$, $\forall~ (x,t) \in \Gamma$ and $L_{\varepsilon}w(x,t) \leq 0$, $\forall ~ (x,t) \in Q$ then $w(x,t) \geq 0$, $\forall (x,t) \in \overline{Q}$.
\end{lemma}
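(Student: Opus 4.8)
The plan is to argue by contradiction using the classical pointwise technique for minimum principles of parabolic operators. Suppose the conclusion fails, so that $w$ takes a strictly negative value somewhere on $\overline{Q}$. Since $\overline{Q}$ is compact and $w$ is continuous, $w$ attains its minimum at some point $(x^*,t^*)\in\overline{Q}$ with $w(x^*,t^*)<0$. The first step is to locate this minimizer. Because $w\geq 0$ holds on all of $\Gamma=\Gamma_b\cup\Gamma_l\cup\Gamma_r$, the minimizer can lie neither on the left, right, nor bottom portions of the boundary; hence necessarily $0<x^*<1$ and $0<t^*\leq T$, i.e. $(x^*,t^*)$ lies in $Q=(0,1)\times(0,T]$, either in its interior or on the top edge $t=T$.

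Next I would invoke the optimality conditions at $(x^*,t^*)$. Since $x^*$ is an interior minimizer in the spatial variable, the first- and second-order conditions give $\partial_x w(x^*,t^*)=0$ and $\partial_x^2 w(x^*,t^*)\geq 0$. In the time variable there are two cases: if $t^*<T$, then $\partial_t w(x^*,t^*)=0$; if $t^*=T$, then the one-sided difference quotient from below forces $\partial_t w(x^*,T)\leq 0$. In either case $\partial_t w(x^*,t^*)\leq 0$, and all these derivatives exist up to the boundary since $w\in C^{2,1}(\overline{Q})$.

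Finally I would substitute these sign conditions into the operator. Evaluating,
\begin{align*}
L_\varepsilon w(x^*,t^*) = \varepsilon\,\partial_x^2 w(x^*,t^*) + a(x^*,t^*)\,\partial_x w(x^*,t^*) - b(x^*,t^*)\,\partial_t w(x^*,t^*) - c(x^*,t^*)\,w(x^*,t^*),
\end{align*}
the first term is $\geq 0$; the convection term vanishes because $\partial_x w(x^*,t^*)=0$; the term $-b\,\partial_t w\geq 0$ since $b\geq\beta>0$ and $\partial_t w\leq 0$; and $-c\,w>0$ since $c\geq\gamma>0$ and $w(x^*,t^*)<0$. Hence $L_\varepsilon w(x^*,t^*)>0$, contradicting the hypothesis $L_\varepsilon w\leq 0$ on $Q$. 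This contradiction establishes $w\geq 0$ on $\overline{Q}$.

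I expect the only genuinely delicate points to be the degenerate convection coefficient and the inclusion of the top edge $t=T$ in $Q$. The degeneracy $a=a_0 x^p$ is in fact harmless: at an interior spatial minimum the convection term drops out entirely regardless of whether $x^*$ sits near the turning point $x=0$, so no special argument there is needed. The main care is to use the correct one-sided sign of $\partial_t w$ at $t=T$, and to confirm at the outset that a strictly negative minimum is forced into the open spatial interior by the boundary positivity on $\Gamma$.
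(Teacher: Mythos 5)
Your proof is correct. Note, though, that the paper does not actually prove this lemma: it simply cites the classical reference of Protter and Weinberger, so your contribution is a self-contained version of the standard argument that the citation hides. Comparing the two, your route has the advantage of making explicit exactly which structural features of the problem are used: (i) the negative minimizer is pushed into $Q=(0,1)\times(0,T]$ by the boundary positivity on $\Gamma$; (ii) the degenerate convection coefficient $a=a_0x^p$ is harmless because $\partial_x w$ vanishes at an interior spatial minimum; (iii) the top edge $t=T$ belongs to $Q$ in this paper's notation, so the hypothesis $L_\varepsilon w\leq 0$ is available there and the one-sided bound $\partial_t w(x^*,T)\leq 0$ suffices; and (iv) most importantly, the strict positivity $c\geq\gamma>0$ is what lets the contradiction close directly, since it yields the strict inequality $-c\,w(x^*,t^*)>0$ at a negative minimum. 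Had the paper only assumed $c\geq 0$, your argument would stall at $L_\varepsilon w(x^*,t^*)\geq 0$ (not $>0$), and one would need the usual perturbation device (e.g. replacing $w$ by $w+\delta e^{\lambda t}$ or $w/\phi$ for a suitable positive $\phi$, then letting $\delta\to 0$) — this is essentially what the general treatment in Protter and Weinberger does, and it is worth recording that your shortcut is purchased by the hypothesis $c\geq\gamma>0$ in (\ref{1.3}).
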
 
\begin{proof}
	The proof follows easily from \cite{MR0219861}.
\end{proof}

\begin{lemma}
	\label{lm2.2}
	Let $u(x,t)$ be the solution of the problem (\ref{1.1})-(\ref{1.3}) then for all $\varepsilon>0$ the following bound holds
	\begin{align*}
	\|u\|_{\overline{Q}} \leq \|u\|_{\Gamma} + \dfrac{T}{\beta} \|f\|_{\overline{Q}}.
	\end{align*}
\end{lemma}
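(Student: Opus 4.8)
The natural tool here is the minimum principle of Lemma~\ref{lm2.1}, applied to a suitably chosen comparison (barrier) function. The plan is to introduce
\[
\psi^{\pm}(x,t) = \|u\|_{\Gamma} + \frac{t}{\beta}\,\|f\|_{\overline{Q}} \pm u(x,t),
\]
which is deliberately taken to be independent of $x$ apart from the term $\pm u$, and then to verify the two hypotheses of the minimum principle: $\psi^{\pm}\ge 0$ on $\Gamma$ and $L_{\varepsilon}\psi^{\pm}\le 0$ on $Q$. Once these hold, the principle gives $\psi^{\pm}\ge 0$ on $\overline{Q}$, i.e. $|u(x,t)|\le \|u\|_{\Gamma}+\tfrac{t}{\beta}\|f\|_{\overline{Q}}\le \|u\|_{\Gamma}+\tfrac{T}{\beta}\|f\|_{\overline{Q}}$, which is exactly the asserted bound.

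First I would check the boundary inequality. On the initial line $t=0$ the linear-in-$t$ term vanishes and $\psi^{\pm}=\|u\|_{\Gamma}\pm u\ge 0$, since $|u|\le\|u\|_{\Gamma}$ there; on the lateral boundaries $\Gamma_{l},\Gamma_{r}$ the extra term $\tfrac{t}{\beta}\|f\|_{\overline{Q}}$ is nonnegative, so $\psi^{\pm}\ge 0$ again. Next I would compute $L_{\varepsilon}\psi^{\pm}$. Because the additive pieces $\|u\|_{\Gamma}$ and $\tfrac{t}{\beta}\|f\|_{\overline{Q}}$ do not depend on $x$, the degenerate convection term $a\,\partial_x$ and the diffusion term $\varepsilon\,\partial_x^2$ act only on $\pm u$, and a direct differentiation yields
\[
L_{\varepsilon}\psi^{\pm} = \pm L_{\varepsilon}u - \frac{b}{\beta}\|f\|_{\overline{Q}} - c\,\|u\|_{\Gamma} - \frac{c\,t}{\beta}\|f\|_{\overline{Q}}.
\]
Using $b\ge\beta$ (so $b/\beta\ge 1$), together with $c\ge\gamma>0$ and $t\ge 0$, the last three terms are bounded above by $-\|f\|_{\overline{Q}}$, whence $L_{\varepsilon}\psi^{\pm}\le |L_{\varepsilon}u|-\|f\|_{\overline{Q}}$. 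In particular, when the right-hand side is just $f$ this is $\le|f|-\|f\|_{\overline{Q}}\le 0$, and the minimum principle closes the argument. Note that the sole structural facts used are $b\ge\beta$ and $c\ge\gamma>0$; the degeneracy $a=a_0x^p$ never enters, precisely because the barrier is $x$-independent, so the turning-point behaviour plays no adverse role.

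The one point that needs care, and which I expect to be the main obstacle, is the delay term: since $L_{\varepsilon}u = e\,u(\cdot,t-\tau)+f$, the quantity $|L_{\varepsilon}u|$ is not controlled by $\|f\|_{\overline{Q}}$ alone. I would resolve this through the method of steps described just before the lemma. On the first strip $[0,1]\times[0,\tau]$ the delay argument $t-\tau$ lies in $[-\tau,0]$, where $u$ equals the prescribed data $s$; thus $u(\cdot,t-\tau)$ is a known function that is absorbed into the right-hand side, and the barrier computation above applies verbatim with $\Gamma$ taken as the parabolic boundary of that strip. Propagating the estimate strip by strip, using the already-controlled values at $t=(m-1)\tau$ as the initial data of the $m$-th strip and invoking $e\ge 0$, extends the bound to all of $\overline{Q}$ without altering the comparison inequality, so that the stated estimate holds up to the delay contribution carried along in the effective forcing.
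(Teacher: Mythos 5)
Your proposal is correct and follows essentially the same route as the paper: the paper's proof is a one-liner that invokes exactly the barrier function $\psi^{\pm}(x,t) = \|u\|_{\Gamma} + \frac{t}{\beta}\|f\|_{\overline{Q}} \pm u(x,t)$ together with the minimum principle of Lemma~\ref{lm2.1}. Your additional care with the delay term --- absorbing $e\,u(\cdot,t-\tau)$ into the effective forcing strip by strip via the method of steps --- is exactly the detail the paper leaves implicit (it sets up the method of steps in the text preceding the lemma), so your write-up is a faithful, more complete version of the same argument.
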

\begin{proof}
	Using the barrier function 
	\begin{align*}
	\psi^{\pm}(x,t) = \|u\|_{\Gamma} + \frac{t}{\beta} \|f\|_{\overline{Q}} \pm u(x,t),
	\end{align*}
	the desired estimate can be obtained using the minimum principle.
\end{proof}

The problem data are assumed to be sufficiently smooth that guarantee the required smoothness of the solution on the set $\overline{Q}$.
We assume that the data of the problem (\ref{1.1})-(\ref{1.3}) satisfy the following conditions
\begin{align}
\label{r1}
& a,~b,~c,~f \in C_{\lambda}^{l,l/2}(\overline{Q}),~ s(x,t) \in C^{l+2,l/2+1}_{\lambda}(\Gamma_{b}), \nonumber \\
&q_{0}(t) \in C_{\lambda}^{l/2+1}(\overline{\Gamma}_0),~q_{1}(t) \in C_{\lambda}^{l/2+1}(\overline{\Gamma}_{1}),~ l\geq 0,~ \lambda \in (0,1).
\end{align}
Also, the data of the problem (\ref{1.1})-(\ref{1.3}) satisfy on $\Gamma^{c}=(\overline{\Gamma}_{0} \cup \overline{\Gamma}_1) \cap \Gamma_b$ ( i.e. the corner points $(0,0)$, $(1,0)$, $(0,-\tau)$ and $(1,-\tau)$ ) the compatibility conditions for the derivatives in $t$ upto order $K_0=[l/2]+1$. In the case when the initial function $s(x,t)$ together with its derivatives vanish on the set $\Gamma^c$, the following conditions,
\begin{align}
\label{r2}
&\dfrac{\partial^{k+k_0} s(x,t)}{\partial x^k \partial t^{k_0}}=0, ~ \dfrac{\partial ^{k_0} q_0(t)}{\partial t^{k_0}}=0,~
\dfrac{\partial ^{k_0} q_1(t)}{\partial t^{k_0}}=0, ~ 0 \leq k+2k_0 \leq l+2,\\
\text{and}~ &\dfrac{\partial^{k+k_0} f(x,t)}{\partial x^k \partial t^k_0}=0,~ 0 \leq k+2k_0 \leq l, ~ (x,t)\in S^c, \nonumber
\end{align}
guarantee the compatibility conditions for the derivatives in $t$ upto order $K_0=[l/2]+1$. These compatibility conditions ensure the existence of the unique solution $u(x,t) \in C^{K,K/2}_\lambda (\overline{Q})$, where $K=l+2$, for the problem (\ref{1.1})-(\ref{1.3}) \cite{MR0241822}.\newline

\begin{lemma}
	\label{lm2.3}
	Let the solution $u(x,t)$ of the problem (\ref{1.1})-(\ref{1.3}) satisfying the assumptions (\ref{r1})-(\ref{r2}) for $K=6$, then 
	\begin{align*}
	\left\| \dfrac{\partial ^{i+j} u}{ \partial x^i \partial t^j } \right\|_{\overline{Q}} \leq C \varepsilon^{-i/2}, \quad \forall~ 0 \leq i+2j \leq 6,
	\end{align*}
	where C is independent of $\varepsilon$.
\end{lemma}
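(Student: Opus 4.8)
The plan is to establish these bounds by a combination of the method of steps, the minimum principle from Lemma \ref{lm2.1}, and a stretching-of-variables argument that reduces the singularly perturbed problem to a regularly perturbed one with $\varepsilon$-independent data. The target estimate $\|\partial^{i+j}u/\partial x^i\partial t^j\|_{\overline{Q}}\le C\varepsilon^{-i/2}$ has exactly the scaling one expects from a parabolic boundary layer of width $O(\sqrt\varepsilon)$: each spatial derivative costs a factor $\varepsilon^{-1/2}$, while the time derivatives are $\varepsilon$-uniformly bounded. The base case $i=j=0$ is already supplied by Lemma \ref{lm2.2}, which gives $\|u\|_{\overline Q}\le\|u\|_\Gamma+(T/\beta)\|f\|_{\overline Q}$, an $\varepsilon$-uniform $L^\infty$ bound.

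\textbf{First} I would work interval by interval in time. On the first strip $\overline\Omega\times[0,\tau]$ the delay term is the known datum $s(x,t-\tau)$, so \eqref{1.1} becomes a genuine (non-delay) singularly perturbed parabolic convection-diffusion equation with right-hand side $e(x,t)s(x,t-\tau)+f(x,t)$, whose supremum is controlled $\varepsilon$-uniformly. Having bounded $u$ and its derivatives on this strip, the forcing $e\,u(\cdot,\cdot-\tau)+f$ on the next strip $[\tau,2\tau]$ is again a known, $\varepsilon$-uniformly bounded function, and the same estimates propagate; after $k$ steps one covers all of $\overline Q$. Thus it suffices to prove the derivative bounds for a single strip, treating the delay contribution as part of the inhomogeneity.

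\textbf{The key technical step} is the stretched-variable argument. On a neighbourhood of the layer I would introduce $\xi=x/\sqrt\varepsilon$ (and near the right boundary or interior, where no layer is present, work directly). In the $(\xi,t)$ variables the principal part $\varepsilon\,\partial_x^2$ becomes $\partial_\xi^2$, the convection term $a\,\partial_x=a_0x^p\partial_x$ transforms with its own $\sqrt\varepsilon$ factors, and the resulting operator has coefficients bounded independently of $\varepsilon$ on any fixed $\xi$-ball. Standard interior and boundary Schauder estimates for parabolic equations (in the H\"older spaces $C^{K,K/2}_\lambda$ whose existence is guaranteed by the compatibility conditions \eqref{r1}--\eqref{r2}) then bound the $(\xi,t)$-derivatives of $u$ up to order $6$ by the $C^0$ norm of $u$ and the data, uniformly in $\varepsilon$. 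Translating $\partial_\xi=\sqrt\varepsilon\,\partial_x$ back, each of the $i$ spatial derivatives returns a factor $\varepsilon^{-1/2}$, producing exactly $\varepsilon^{-i/2}$, while time derivatives carry no $\varepsilon$ weight, giving the stated $0\le i+2j\le 6$ envelope.

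\textbf{The main obstacle} I anticipate is the degeneracy of the convection coefficient, $a=a_0x^p$ with $p\ge1$: the turning-point structure at $x=0$ means the convection term does not dominate near the layer, and one must check that after stretching the transformed convection coefficient $\sqrt\varepsilon\,a_0(\sqrt\varepsilon\,\xi)^p=\varepsilon^{(p-1)/2}a_0\,\xi^p$ stays bounded (indeed it is small) on bounded $\xi$-sets, so it does not spoil the $\varepsilon$-uniformity of the Schauder constants. One must also verify that the compatibility conditions at the corner points, together with $c\ge\gamma>0$ and $b\ge\beta>0$, are strong enough to keep the H\"older norms of the data $\varepsilon$-uniform at every step of the method-of-steps induction, so that the constant $C$ genuinely does not accumulate $\varepsilon$-dependence across the $k$ time strips.
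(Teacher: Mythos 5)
Your proposal follows essentially the same route as the paper's proof: the method of steps to reduce each strip $[k\tau,(k+1)\tau]$ to a non-delay problem with known right-hand side, the stretching $\widetilde{x}=x/\sqrt{\varepsilon}$ under which the degenerate convection coefficient becomes $\varepsilon^{(p-1)/2}a_0\widetilde{x}^{\,p}$ and the equation is treated as $\varepsilon$-independent, the classical parabolic (Ladyzhenskaya-type) local estimates bounding stretched derivatives by $1+\|\widetilde{u}\|$, and the back-transformation yielding the factor $\varepsilon^{-i/2}$ together with the sup-norm bound of Lemma \ref{lm2.2}. The only slip is the typographical one in your intermediate expression for the transformed convection term (the factor should be $\varepsilon^{-1/2}$ from $\partial_x=\varepsilon^{-1/2}\partial_\xi$, not $\sqrt{\varepsilon}$), but your final form $\varepsilon^{(p-1)/2}a_0\xi^p$ and the conclusion drawn from it are correct.
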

\begin{proof}
	Using the method of steps we derive the bounds on the derivatives of the solution $u(x,t)$. First, we consider the case for $t \leq \tau$. Since, $u(x,t-\tau)$ is a known function in $[0,1] \times [0,\tau]$, the problem (\ref{1.1})-(\ref{1.3}) becomes
	\begin{align*}
	L_{\varepsilon}u(x,t)&= \left( \varepsilon \frac{\partial^2{u}}{\partial{x^2}}+a \frac{\partial{u}}{\partial{x}}-b\frac{\partial{u}}{\partial{t}}-c u\right) (x,t)= e(x,t) s(x, t- \tau)+f(x,t),~ \forall~ (x,t) \in Q_1=(0,1) \times (0,\tau],\\
	u(x,0) &= s(x,0)\hspace{0.3cm} on \hspace{0.3cm} \Gamma_{0} = \lbrace (x,t) : 0\leq x \leq 1, ~  t = 0 \rbrace,\nonumber \\
	u(0,t) &= q_{0}(t)\hspace{0.3cm} on \hspace{0.3cm} \Gamma_{l} = \lbrace (0,t) : 0\leq t \leq  \tau \rbrace,\nonumber \\
	u(1,t) &= q_{1}(t)\hspace{0.3cm} on \hspace{0.3cm} \Gamma_{r} = \lbrace (1,t) : 0\leq t\leq \tau \rbrace.
	\end{align*}
	The bounds in the interval $Q_1$ are obtained as follows. The variable $x$ is transformed to the stretched variable $\widetilde{x}= x/ \sqrt{\varepsilon}$, we write the problem (\ref{1.1})-(\ref{1.3}) as
	\begin{align}\label{2.1}
	\left(\widetilde{u}_{\widetilde{x}\widetilde{x}}+\widetilde{a} \varepsilon^{\frac{p-1}{2}} \widetilde{u}_{\widetilde{x}}-\widetilde{b}\widetilde{u}_{t}-\widetilde{c}\widetilde{u}\right)(\widetilde{x},t) &= \widetilde{e}(\widetilde{x},t) \widetilde{s}(\widetilde{x},t-\tau) + \widetilde{f}(\widetilde{x},t)\quad \mbox{in} \quad \widetilde{Q_1}, \\
	\widetilde{u}(\widetilde{x},t) &= \widetilde{s}(\widetilde{x},t)\hspace{0.3cm} on \hspace{0.3cm} \widetilde{\Gamma}_{0} = \lbrace (\widetilde{x},t) : 0\leq \widetilde{x} \leq 1/\sqrt{\varepsilon},~ t=0 \rbrace,\nonumber \\
	\widetilde{u}(0,t) &= \widetilde{q}_{0}(t)\hspace{0.3cm} on \hspace{0.3cm} \widetilde{\Gamma}_l = \lbrace (0,t) : 0\leq t \leq  \tau \rbrace,\nonumber \\
	\widetilde{u}(1/\sqrt{\varepsilon},t) &= \widetilde{q}_{1}(t)\hspace{0.3cm} on \hspace{0.3cm} \widetilde{\Gamma}_{r} = \lbrace (1/\sqrt{\varepsilon},t) : 0\leq t\leq \tau \rbrace, \nonumber
	\end{align}
	where $ \widetilde{Q}_1 = (0, 1/\sqrt{\varepsilon})\times (0,\tau] $ and $ \widetilde{\Gamma} $ is the boundary analogous to $ \Gamma $. Since, for $p>1$ the term $ \varepsilon^{\frac{p-1}{2}} $ is very small it can be neglected and for $ p=1 $ its value is one, the differential equation (\ref{2.1}) can be made independent of $ \varepsilon $. Using \cite[estimate (10.5)]{MR0241822} we have, for all non negative integers $i, j$ such that $0 \leq i + 2j \leq 4$, and all $\widetilde{N}_{\delta}$ in $\widetilde{D}_{\varepsilon}$,
	\begin{align}
	\label{2.2}
	\left\| \dfrac{\partial^{i+j} \widetilde{u}}{\partial x^i \partial t^j} \right\|_{\widetilde{N}_{\delta}} \leq C (1 + \|\widetilde{u}\|_{\widetilde{N}_{2\delta}}).
	\end{align}	
	Here, $C$ is a constant independent of ${\widetilde{N}_{\delta}}$ and for any $\delta>0$, ${\widetilde{N}_{\delta}}$ is a neighbourhood of diameter $\delta > 0$ in $\widetilde{Q}_1$. Transforming back to the original variable $x$, we get
	\begin{align*}
	\left\|\frac{\partial^{i+j}{u}}{\partial{x^i}\partial{t^j}} \right\|_{\overline{Q}_1} \leq C \varepsilon^{-i/2}( 1 + \|u\|_{\overline{Q}_1}).
	\end{align*}
	Using the bounds of $u(x,t)$ given in Lemma \ref{lm2.2} we get the desired estimates.\\
	Next, we consider the case for $t \in [\tau, 2 \tau]$. In this case $u(x,t)$ is the solution of the following initial boundary value problem (IBVP):
	\begin{align*}
	L_{\varepsilon}u(x,t)&= \left( \varepsilon \frac{\partial^2{u}}{\partial{x^2}}+a \frac{\partial{u}}{\partial{x}}-b\frac{\partial{u}}{\partial{t}}-c u\right) (x,t)= e(x,t) u(x, t- \tau)+f(x,t),~ \forall~ (x,t) \in Q_2=(0,1) \times (\tau,2\tau],\\
	u(x,\tau) &= s(x,\tau)\hspace{0.3cm} on \hspace{0.3cm} \Gamma_{0} = \lbrace (x,t) : 0\leq x \leq 1, ~  t = \tau \rbrace,\nonumber \\
	u(0,t) &= q_{0}(t)\hspace{0.3cm} on \hspace{0.3cm} \Gamma_{l} = \lbrace (0,t) : \tau \leq t \leq  2\tau \rbrace,\nonumber \\
	u(1,t) &= q_{1}(t)\hspace{0.3cm} on \hspace{0.3cm} \Gamma_{r} = \lbrace (1,t) : \tau \leq t\leq 2\tau \rbrace.
	\end{align*}
	Again the RHS is a known function so the proof follows on the similar lines as discussed in the case for $[0,\tau]$. We proceed similarly to prove the result for $t \in [0,T]$.
\end{proof}
The bounds obtained in the Lemma \ref{2.3} are not sufficient for proving $\varepsilon$-uniform error estimates. Therefore, stronger bounds on these derivatives are obtained by decomposing the solution $u(x,t)$ into the regular part $y(x,t)$ and the singular part $z(x,t)$. We define
\begin{align*}
u(x,t)= y(x,t) + z(x,t).
\end{align*}
The regular component $y(x,t)$ is further decomposed into the sum
\begin{align*}
y=(y_0 + \sqrt{\varepsilon} y_1 + \varepsilon y_2 + \varepsilon^{3/2} y_3)(x,t),
\end{align*}
where $y_0$, $y_1$, $y_2$ and $y_3$ are defined as 
\begin{align}
\label{2.3}
a(x,t) \dfrac{\partial y_0}{\partial x} - b(x,t) \dfrac{\partial y_0}{\partial t} - c(x,t) y_0 &= e(x,t) y_0(x,t-\tau) + f(x,t), \quad \forall ~ (x,t) \in Q,\nonumber \\ 
y_0(x,t)&=u(x,t), \quad \forall ~ (x,t) \in \Gamma_b \cup \Gamma_r,
\end{align}
\begin{align}
\label{2.4}
a(x,t) \dfrac{\partial y_1}{\partial x} - b(x,t) \dfrac{\partial y_1}{\partial t} - c(x,t) y_1 &=e(x,t) y_1(x,t-\tau) -\sqrt{\varepsilon} \dfrac{\partial^2 y_0}{\partial x^2} , \quad \forall ~ (x,t) \in Q,\nonumber \\ 
y_1(x,t)&=0, \quad \forall ~ (x,t) \in \Gamma_b \cup \Gamma_r,
\end{align}
\begin{align}
\label{2.5}
a(x,t) \dfrac{\partial y_2}{\partial x} - b(x,t) \dfrac{\partial y_2}{\partial t} - c(x,t) y_2 &=e(x,t) y_2(x,t-\tau) -\sqrt{\varepsilon} \dfrac{\partial^2 y_1}{\partial x^2} , \quad \forall ~ (x,t) \in Q,\nonumber \\ 
y_2(x,t)&=0, \quad \forall ~ (x,t) \in \Gamma_b \cup \Gamma_r
\end{align}
and
\begin{align}
\label{2.6}
L_{\varepsilon} y_3 &=e(x,t) y_3(x,t-\tau) -\sqrt{\varepsilon} \dfrac{\partial^2 y_2}{\partial x^2} , \quad \forall ~ (x,t) \in Q,\nonumber \\
y_3(x,t)&=0, \quad \forall ~ (x,t) \in \Gamma.
\end{align}
Therefore, the regular component $y(x,t)$ satisfies 
\begin{align}
\label{2.7}
L_{\varepsilon}y(x,t) &= f(x,t) + e(x,t) y(x,t-\tau), \quad \forall ~ (x,t) \in Q,\nonumber \\
y(x,t)&=u(x,t), \quad \forall ~ (x,t) \in \Gamma_b \cup \Gamma_r, \nonumber \\
y(x,t)&=y_0(x,t) + \sqrt{\varepsilon}y_1(x,t) + \varepsilon y_2(x,t) + \varepsilon^{3/2} y_3(x,t), \quad \forall ~ (x,t) \in \Gamma_l.
\end{align}
The singular component $z(x,t)$ satisfies the following IBVP: 
\begin{align}
\label{2.8}
L_{\varepsilon}z(x,t) &=  e(x,t) z(x,t-\tau), \quad \forall ~ (x,t) \in Q,\nonumber \\
z(x,t)&=0, \quad \forall ~ (x,t) \in \Gamma_b \cup \Gamma_r,\nonumber \\
z(x,t)&= u(x,t) - y(x,t), \quad \forall ~ (x,t) \in \Gamma_l.
\end{align}

\begin{theorem}\label{thm1}
	For all non-negative integers $ i, j $ such that $ 0\leq i+2j\leq 6, $ the regular component $ y(x,t) $ satisfies
	\begin{align*}
	\left\| \frac{\partial^{i+j}{y}}{\partial{x^i}\partial{t^j}}\right\|_{\overline{Q}}\leq C\left(1+\varepsilon^{\frac{3-i}{2}}\right)
	\end{align*} 
	and the singular component $ z(x,t) $ satisfies
	\begin{align*}
	\left\| \frac{\partial^{i+j}{z}}{\partial{x^i}\partial{t^j}}\right\|_{\overline{Q}}\leq C\left(\varepsilon^{-i/2}\exp\left(\frac{-m x}{\sqrt{\varepsilon}}\right)\right),
	\end{align*} 
	where $m=\sqrt{\gamma}$.
\end{theorem}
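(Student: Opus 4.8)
The plan is to treat the regular component $y$ and the singular component $z$ separately, exploiting the layer-corrected asymptotic expansion already fixed in \eqref{2.3}--\eqref{2.8}. Throughout, the delay is dispatched by the method of steps: on each strip $(0,1)\times((n-1)\tau,n\tau]$ the delayed argument $(\cdot,t-\tau)$ is a datum inherited from the previous strip, so every boundary value problem below is a genuine non-delay problem with a known right-hand side, and the estimates are propagated forward over the finitely many strips that tile $[0,T]$.

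For the regular part, the key observation is that $y_0,y_1,y_2$ solve the \emph{first-order} reduced problems \eqref{2.3}--\eqref{2.5}, in which $\varepsilon$ enters only through the forcing terms $-\sqrt{\varepsilon}\,\partial_x^2 y_{k-1}$, bounded since $\sqrt{\varepsilon}\le 1$. First I would show, by integrating along the characteristics of $M:=a\partial_x-b\partial_t-c$ and invoking the assumed smoothness and compatibility of the data, that $y_0,y_1,y_2$ and their derivatives up to order six are bounded by a constant $C$ independent of $\varepsilon$; here the degeneracy $a=a_0x^p$ is harmless, because for $p\ge 1$ the characteristics $dx/dt=-a/b$ reach $x=0$ only as $t\to\infty$, so no singularity forms at the turning point and no boundary datum at $x=0$ is needed. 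The remainder $y_3$ solves the full problem \eqref{2.6} driven by $L_\varepsilon$, so Lemma \ref{lm2.3} applies directly and gives $\|\partial_x^i\partial_t^j y_3\|_{\overline{Q}}\le C\varepsilon^{-i/2}$. Combining these through $y=y_0+\sqrt{\varepsilon}\,y_1+\varepsilon\,y_2+\varepsilon^{3/2}y_3$ and the triangle inequality, the first three terms contribute $O(1)$ and the last contributes $\varepsilon^{3/2}\cdot\varepsilon^{-i/2}=\varepsilon^{(3-i)/2}$, yielding the stated bound $C(1+\varepsilon^{(3-i)/2})$.

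For the singular part I would first bound $z$ itself ($i=j=0$) by the barrier $\psi(x,t)=C\exp(-mx/\sqrt{\varepsilon})$ with $m=\sqrt{\gamma}$. A direct computation gives $L_\varepsilon\psi=\big(m^2-c-\tfrac{am}{\sqrt{\varepsilon}}\big)\psi$, and since $c\ge\gamma=m^2$ and $a=a_0x^p\ge 0$ on $\overline{Q}$, we get $L_\varepsilon\psi\le 0$; choosing $C$ to dominate $|z|=|u-y|$ on $\Gamma_l$ (finite by Lemma \ref{lm2.2} and the regular bounds) and using $z=0$ on $\Gamma_b\cup\Gamma_r$, Lemma \ref{lm2.1} gives $|z|\le\psi$. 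On the first strip the delayed term vanishes because $z\equiv 0$ on $\Gamma_b$; on later strips the source $e\,z(\cdot,t-\tau)$ is already $O(\exp(-mx/\sqrt{\varepsilon}))$ and is absorbed into the barrier (enlarging $C$, or inserting a mild time-growth factor), the finiteness of the number of strips keeping $C$ controlled. To pass to derivatives I would rescale to the stretched variable $\widetilde x=x/\sqrt{\varepsilon}$, in which the operator becomes $\varepsilon$-uniformly well posed and the bound on $z$ reads $|\widetilde z(\widetilde x,t)|\le C\exp(-m\widetilde x)$; applying an interior estimate of the type \eqref{2.2} on neighbourhoods and using that the local supremum of $\widetilde z$ over $\widetilde N_{2\delta}$ decays like $\exp(-m\widetilde x)$ yields $\|\partial_{\widetilde x}^i\partial_t^j\widetilde z\|\le C\exp(-m\widetilde x)$, and transforming back via $\partial_x^i=\varepsilon^{-i/2}\partial_{\widetilde x}^i$ produces exactly $C\varepsilon^{-i/2}\exp(-mx/\sqrt{\varepsilon})$.

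The main obstacle is the derivative estimate for $z$: the rescaling must be arranged so that the interior Schauder-type estimates apply uniformly in $\varepsilon$ while faithfully transporting the exponential decay through the enlarged neighbourhoods $\widetilde N_{2\delta}$, and the delay must be threaded through this argument strip by strip so that the decay rate $m=\sqrt{\gamma}$ is not degraded. A secondary delicate point is the regularity of the reduced solutions $y_0,y_1,y_2$ up to the degenerate edge $x=0$, which is precisely what keeps the $O(1)$ part of the regular bound independent of $\varepsilon$.
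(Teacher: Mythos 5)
Your proposal is correct and follows essentially the same route as the paper: the method of steps for the delay, boundedness of $y_0,y_1,y_2$ as solutions of the first-order reduced problems together with the $\varepsilon^{-i/2}$ bound for $y_3$ combined through the expansion $y=y_0+\sqrt{\varepsilon}y_1+\varepsilon y_2+\varepsilon^{3/2}y_3$, and, for $z$, a barrier of the form $C\exp(-mx/\sqrt{\varepsilon})$ (the paper inserts the factor $\exp(t)$, which is precisely your ``mild time-growth factor'') followed by the stretched-variable interior estimates of Ladyzenskaja et al.\ transformed back to $x$. The only cosmetic difference is that the paper splits the stretched domain into $(0,2]$ and $(2,1/\sqrt{\varepsilon})$ to absorb the constant term near the boundary, which is the same device you describe for transporting the exponential decay through the neighbourhoods.
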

\begin{proof}
	We first consider the interval $[0,1] \times [0,\tau]$. The data of the problems (\ref{2.3})-(\ref{2.7}) are assumed to be sufficiently smooth and satisfy the appropriate compatibility conditions to ensure the existence of the unique solution $y_0$, $y_1$, $y_2$, $y$ $\in C_{\lambda}^{K,K/2}$, for $K=6$. Since $ y_0$, $y_1$ and $y_2 $ are solutions of first order hyperbolic equations (\ref{2.3}), (\ref{2.4}) and (\ref{2.5}) over $Q_1$ as well we have the following estimates
	\begin{align}
	\label{2.9}
	\left\|\frac{\partial^{i+j}{y_0}}{\partial{x^i}\partial{t^j}}\right\|_{\overline{Q}_1} \leq C,
	\end{align}
	\begin{align}
	\label{2.10}
	\left\|\frac{\partial^{i+j}{y_1}}{\partial{x^i}\partial{t^j}}\right\|_{\overline{Q}_1} \leq C
	\end{align}
	and
	\begin{align}
	\label{2.11}
	\left\|\frac{\partial^{i+j}{y_2}}{\partial{x^i}\partial{t^j}}\right\|_{\overline{Q}_1} \leq C.
	\end{align}
	As $y_3$ is the solution of a problem similar to the initial boundary value problem (\ref{1.1})-(\ref{1.3}) therefore, for all non-negative integer $ i, j $ such that $ 0\leq i+2j \leq 4 $, we have 
	\begin{align}
	\label{2.12}
	\left\|\frac{\partial^{i+j}{y_3}}{\partial{x^i}\partial{t^j}}\right\|_{\overline{Q}} \leq C\varepsilon^{-i/2}.
	\end{align}
	Using inequalities (\ref{2.9})-(\ref{2.12}) we obtain the required estimates for the regular component $y(x,t)$ for $(x,t) \in \overline{Q}_1$.\newline
	To obtain the bound on the singular component, we define two barrier functions 
	\begin{align*}
	\psi^\pm(x,t) = C\exp\left(\frac{-m x}{\sqrt{\varepsilon}}\right)\exp(t)\pm z(x,t), \quad \forall \thinspace (x,t)\in \overline{Q}_1,
	\end{align*}
	where $ C $ is chosen sufficiently large such that we have 
	\begin{align*}
	\psi^\pm(x,t)\geq 0, \quad \forall \thinspace (x,t)\in \Gamma.
	\end{align*}
	Now,
	\begin{align*}
	L_\varepsilon \psi^\pm(x,t)&= C\exp\left(\frac{-m x}{\sqrt{\varepsilon}}\right)\exp(t)\left(m^2-\frac{a(x,t)m}{\sqrt{\varepsilon}}-b(x,t)-c(x,t)\right) \leq 0, \quad \forall ~(x,t)\in Q_1. \nonumber \\ 
	\end{align*}
	By Minimum principle, we have 
	\begin{align*}
	\left|z(x,t)\right| &\leq C \exp\left(\frac{-m x}{\sqrt{\varepsilon}}\right)\exp(t) \leq C \exp\left(\frac{-m x}{\sqrt{\varepsilon}}\right), \quad \forall \thinspace (x,t)\in \overline{Q}_1.
	\end{align*}
	To obtain the bound on the derivatives of $z$ we transform the variable $x$ to the stretched variable $ \widetilde{x} = \dfrac{x}{\sqrt{\varepsilon}}$. The transformed differential equation becomes independent of $\varepsilon$. For each neighbourhood $\widetilde{N}_{\delta}$ in $(2,1/\sqrt{\varepsilon}) \times (0,\tau)$ using \cite[\S 4.10]{MR0241822}, we have
	\begin{align}
	\label{2.13}
	\left\|\frac{\partial^{i+j}{\widetilde{z}}}{\partial{\widetilde{x}^i}\partial{t^j}} \right\|_{\widetilde{N}_\delta} \leq C \|\widetilde{z}\|_{\widetilde{N}_{2\delta}}.
	\end{align}
	The required bounds can be obtained by transforming the inequality (\ref{2.13}) in terms of the original variable $x$ and using the bound just obtained on $z(x,t)$. Similarly, for each neighbourhood $\widetilde{N}_{\delta}$ in $(0,2] \times (0,\tau)$ using \cite[\S 4.10]{MR0241822}, we have 
	\begin{align}
	\label{2.13a}
	\left\|\frac{\partial^{i+j}{\widetilde{z}}}{\partial{\widetilde{x}^i}\partial{t^j}} \right\|_{\widetilde{N}_\delta} \leq C ( 1 + \|\widetilde{z}\|_{\widetilde{N}_{2\delta}}).
	\end{align}
	Again transforming the inequality (\ref{2.13a}) in terms of the original variable $x$, using the bound on $z(x,t)$ and noting that $e^{-x/\sqrt{\varepsilon}}>C$ for $\widetilde{x}>2$, we have the required bounds. Next, consider the second interval $[\tau,2\tau]$. In this case we have $y(x,t)$ satisfy the problem (\ref{2.7}) for $(x,t) \in Q_2$. The argument for rest of the proof is same as in the first case. The proof for $t \geq 2\tau $ also follows on the same lines.  
\end{proof}
\section{Discrete Problem} 
In this section, we discretize  the problem (\ref{1.1})-(\ref{1.3}) in both space and time direction. Firstly, a modified Shishkin mesh $S(L)$ is constructed to discretize the spatial domain.\\
Let $\overline{\Omega}^N := \{x_i\}_{i=0}^N$ be the partition of the spatial domain $\overline{\Omega}$. We define the transition parameter $\sigma$ by
\begin{align*}
\sigma = \min \{1/2, \sigma_0 \sqrt{\varepsilon}L \},
\end{align*} 
where $L$ satisfies $\ln(\ln N) < L \leq \ln(N) $ and $e^{-L} \leq L/N$. The fitted piecewise uniform mesh $S(L)$ is constructed by dividing the domain $\overline{\Omega}$ into two subdomains $\overline{\Omega} = \overline{\Omega}_1 \cup \overline{\Omega}_2$, where ${\Omega}_1 = (0, \sigma]$ and $\Omega_2 = (\sigma,1)$. A piecewise uniform mesh $\Omega_{\sigma}^N$ on $\Omega$ with $N$ mesh points is obtained by placing a uniform mesh with $N/2$ mesh points in each subintervals.
The spatial step size $ h_i = x_i-x_{i-1}$, for $i= 1,2,\ldots,N $ is defined as
\begin{align*}
h_i = \begin{cases}
h = \dfrac{2\tau}{N}, \hspace{3cm} 1 \leq i \leq \dfrac{N}{2},\\\\
H = \dfrac{2(1-\tau)}{N}, \hspace{2cm}  \dfrac{N}{2}+1 \leq i \leq N,
\end{cases}
\end{align*}
where $ h $ and $ H $ are the spatial step size in $ [0,\tau] $ and $ (\tau,1], $ respectively.\newline
For temporal discretization a uniform mesh $\Omega^M$ and $\Omega^{m_\tau}$ with $M$ and $m_\tau$ mesh points is considered by placing a uniform mesh with $M$ and $m_\tau$ mesh points in $[0,T]$ and $[-\tau,0)$, respectively. The uniform step size $\Delta t$ in time direction satisfies $\tau=m_{\tau} \Delta t$, where $m_\tau$ is a positive integer, $t_n=n \Delta t$, $n \geq -m_\tau$. \\
Piecewise uniform tensor product meshes $Q_{\sigma}^{N,M}$ on $Q$ and $\Gamma_{b,\sigma}^{N,M}$ on $\Gamma_b$ are defined as
\begin{align*}
Q_{\sigma}^{N,M}= \Omega_{\sigma}^{N} \times \Omega^{M}, \quad \Gamma_{b,\sigma}^{N,M} = \Omega_{\sigma}^{N} \times \Omega^{m_{\tau}}
\end{align*}
and the boundary points $\Gamma_{\sigma}^{N,M}$ of $Q_{\sigma}^{N,M}$ are defined as $\Gamma_{\sigma}^{N,M}= \overline{Q}^{N,M} \cap \Gamma$. We put $\Gamma_{l,\sigma}^{N,M}= \overline{Q}^{N,M} \cap \Gamma_{l}$ and $\Gamma_{r,\sigma}^{N,M}= \overline{Q}^{N,M} \cap \Gamma_{r}$. For $\sigma= 1/2$, the mesh is uniform and for $\sigma= \sigma_0 \sqrt{\varepsilon}L$ the mesh points get condensed at the left side of the domain.

\subsection{The finite difference scheme}
For any mesh function $v_i^n= v(x_i,t_n)$, the forward, backward and central difference operators $D_x^{+}$, $D_x^{-}$, $D_x^{0}$ in space and $D_t^-$ in time are defined as
\begin{align*}
D_x^{+}v_{i}^n &= \dfrac{v_{i+1}^n-v_{i}^n}{h_{i+1}}, \quad D_x^{-}v_{i}^n= \dfrac{v_i^n-v_{i-1}^{n}}{h_i},\\
D_x^{0} v_i^n &= \dfrac{v_{i+1}^{n}-v_{i-1}^n}{\widehat{h}_i}, \quad D_{t}^{-}= \dfrac{v_i^n - v_i^{n-1}}{\Delta t},
\end{align*}
where $\widehat{h}_i=h_i +h _{i+1}$ for $i=1, \ldots, N-1$.
We also define the second-order finite difference operator $\delta_x^{2} v_i^n$ in space by
\begin{align*}
\delta_x^2 v_{i}^{n} = \dfrac{2(D_x^{+}v_i^n - D_x^{-}v_i^n)}{\widehat{h}_i}
\end{align*}
and $v_{i \pm 1/2}^{n} = \dfrac{v_{i \pm 1}^n + v_i^n}{2}$.
Applying the central difference scheme in the interval $I=\{i \in \{1,2,...,N-1\}, ~a_i^n h_i < 2 \varepsilon \}$ and the mid-point upwind scheme in the remaining region, we get the following discrete problem
\begin{align}
\label{3.1}
\begin{cases}
U_i^0=s(x_i,t_n), \quad for ~ (x_i,t_n) \in \Gamma_{b,\sigma}^{N,M},\\
L_{\varepsilon}^{N,M} \equiv
\begin{cases}
L_{\varepsilon,cen}^{N,M} U_i^n = f_i^n + e_i^n U(x_i,t_{n-m_{\tau}}), \quad for ~ i \in I,~ n \Delta t \leq T, \\
L_{\varepsilon,mu}^{N,M} U_i^n = f_{i+1/2}^n + e_{i+1/2}^n U(x_{i+1/2}, t_{n-m_{\tau}}), \quad for ~ i \notin I,~ n \Delta t \leq T, 
\end{cases}\\
U_{0}^{n}=q_0(t_n), \quad U_{N}^{n}=q_{1}(t_n), \quad for ~ n \geq 0,
\end{cases}
\end{align}
where
\begin{align*}
L_{\varepsilon,cen}^{N,M} U_i^n&= \varepsilon \delta^2 U_i^n + a_i^n D_x^{0} U_i^n - b_i^n D_t^{-}U_i^n - c_i^n U_i^n,\\
L_{\varepsilon,mu}^{N,M} U_i^n&= \varepsilon\delta_x^2 U_i^n + a_{i+1/2}^n D_x^{+} U_i^n - b_{i+1/2}^n D_t^{-} U_{i+1/2}^{n} - c_{i+1/2}^n U_{i+1/2}^n.
\end{align*}
On simplifying the terms in the system of Eqns. (\ref{3.1}), we obtain the following system of equations on the mesh $\overline{Q}_{\sigma}^{N,M}$ 
\begin{align}
\label{3.2}
\begin{cases}
U_i^0=s(x_i,t_n), ~ for ~ (x_i,t_n) \in \Gamma_{b,\sigma}^{N},\\
\begin{cases}
L_{\varepsilon}^{N,M} U_i^n &= \widetilde{f}_i^{n},~ for ~ 1 \leq i \leq N-1,~ 1\leq n \leq M\\
U_{0}^n&= q_0(t_n),~ U_{N}^{n}=q_1(t_n),~for ~ 1\leq n \leq M. 
\end{cases}
\end{cases}
\end{align}
where
\begin{align}
\label{3.3}
L_{\varepsilon}^{N,M} U_i^n := \begin{cases}
r_{cen,i}^{-} U_{i-1}^n + r_{cen,i}^{0} U_i^n + r_{cen,i}^{+}U_{i+1}^n, \quad for ~ i \in I,\\
r_{mu,i}^{-} U_{i-1}^n + r_{mu,i}^{0} U_{i}^{n} + r_{mu,i}^{+}U_{i+1}^{n}, \quad for ~i \notin I,
\end{cases}
\end{align}

\begin{align}
\widetilde{f}_i^n = \begin{cases}
\label{3.4}
[m_{cen,i}^- f_{i-1}^n + m_{cen,i}^0 f_i^n + m_{cen,i}^+ f_{i+1}^n]+[p_{cen,i}^{-}U_{i-1}^n + p_{cen,i}^0 U_{i}^n + p_{cen,i}^+ U_{i+1}^{n}] \\ + [q_{cen,i}^{-}U_{i-1}^{n-m_\tau} + q_{cen,i}^0 U_{i}^{n-m_\tau} + q_{cen,i}^+ U_{i+1}^{n-m_\tau}],\quad for ~ i \in I, \\\\
[m_{mu,i}^- f_{i-1}^n + m_{mu,i}^0 f_i^n + m_{mu,i}^+ f_{i+1}^n]+[p_{mu,i}^{-}U_{i-1}^n + p_{mu,i}^0 U_{i}^n + p_{mu,i}^+ U_{i+1}^{n}] \\ + [q_{mu,i}^{-}U_{i-1}^{n-m_\tau} + q_{mu,i}^0 U_{i}^{n-m_\tau} + q_{mu,i}^+ U_{i+1}^{n-m_\tau}],\quad for ~ i \notin I 
\end{cases}
\end{align} 
and various coefficients are given by
 \begin{align}
 \label{3.5}
\begin{cases}
r^{-}_{cen,i} &= \frac{2 \varepsilon \Delta t}{\widehat{h_i}h_{i}} - \frac{\Delta t a^{n}_{i}}{\widehat{h_i}},\\
r^{0}_{cen,i} &= \frac{-2 \varepsilon \Delta t}{\widehat{h_i}}\left( \frac{1}{h_{i}} + \frac{1}{h_{i+1}} \right) - b^{n}_{i} - \Delta t c^{n}_{i},\\
r^{+}_{cen,i} &= \frac{2 \varepsilon \Delta t}{\widehat{h_i}h_{i+1}} + \frac{\Delta t a^{n}_{i}}{\widehat{h_i}},\\
p_{cen,i}^{-}&=0, \quad p_{cen,i}^{0}=b_i^n, \quad p_{cen,i}^{+}=0,\\
m_{cen,i}^{-}&=0, \quad m_{cen,i}^{0}=\Delta t, \quad m_{cen,i}^{+}=0,\\
q_{cen,i}^{-}&=0, \quad q_{cen,i}^{0}= \Delta t e_i^n, \quad q_{cen,i}^{+}=0.
\end{cases}
 \end{align}
 and
 \begin{align}
 \label{3.6}
 \begin{cases}
r^{-}_{mu,i} &= \frac{2 \varepsilon \Delta t}{\widehat{h_i}h_{i}} ,\\
r^{0}_{mu,i} &= \frac{-2 \varepsilon \Delta t}{\widehat{h_i}}\left( \frac{1}{h_{i}} + \frac{1}{h_{i+1}} \right)  - \frac{ a^{n}_{i+1/2} \Delta t }{h_{i+1}} - \frac{b^{n}_{i+1/2}}{2 } - \frac{ c^{n}_{i+1/2} \Delta t}{2},\\
r^{+}_{mu,i} &= \frac{2\varepsilon \Delta t }{\widehat{h_i}h_{i+1}} + \frac{a^{n}_{i+1/2} \Delta t}{h_{i+1}} - \frac{b^{n}_{i+1/2}}{2 } - \frac{ c^{n}_{i+1/2} \Delta t}{2},\\
p_{mu,i}^{-}&=0, \quad p_{mu,i}^{0}=\dfrac{b_{i+1/2}^n}{2}, \quad p_{mu,i}^{+}=\dfrac{b_{i+1/2}^n}{2},\\
m_{mu,i}^{-}&=0, \quad m_{mu,i}^{0}=\Delta t/2, \quad m_{mu,i}^{+}=\Delta t/2,\\
q_{mu,i}^{-}&=0, \quad q_{mu,i}^{0}= \dfrac{\Delta t e_{i+1/2}^n}{2}, \quad q_{mu,i}^{+}=\dfrac{\Delta t e_{i+1/2}^n}{2}.
 \end{cases}
 \end{align}
 \textbf{Remark:} As $a(x_{N/2},t_n)>0$ and $a_0(x,t)\geq \alpha >0$ on $\overline{Q}$, we can conclude that there exists a constant $\kappa > 0$ such that $a(x_i,t_n) \geq \kappa >0$, for $N/2 \leq i \leq N$, $n \Delta t \leq T$.
 \begin{lemma}
 	\label{lm3.1}
 	Let $N_0$ be the smallest positive integer satisfying 
 	\begin{align}
 	\label{3.7}
 	N_0 \kappa \geq \frac{\Vert b \Vert_{\overline{Q}}}{\Delta t}+\Vert c \Vert_{\overline{Q}}, \quad 2 \tau_0 \Vert a_0 \Vert_{\overline{Q}} < \frac{N_0}{(\ln(N_0))^2} .
 	\end{align}
 	Then, for all $N \geq N_0$, we have
 	\begin{align*}
 	\begin{cases}
 	r_{cen,i}^{-} ,r_{cen,i}^{+}, r_{mu,i}^{-}, r_{mu,i}^{+} > 0, \quad 1 \leq i \leq N-1,\\
 	|r_{cen,i}^{-}|+|r_{cen,i}^{+}| < |r_{cen,i}^{0}|, \quad i \in I,\\
 	|r_{mu,i}^{-}|+|r_{mu,i}^{+}| < |r_{mu,i}^{0}|, \quad i \notin I,\\
 	| r_{cen,1}^{+} | < | r_{cen,1}^{0}| ,~  | r_{cen,N-1}^{-} | < | r_{cen,N-1}^{0}|, ~ \text{and} ~ | r_{mu,N-1}^{-} | < | r_{mu,N-1}^{0}|.
 	\end{cases}
 	\end{align*}
 \end{lemma}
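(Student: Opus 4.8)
The statement asserts that, for $N$ large enough, the three-point stencil in (\ref{3.3}) has positive off-diagonal coefficients, a negative diagonal, and is strictly diagonally dominant; in other words the system matrix is an $M$-matrix, which is exactly what is needed to run a discrete minimum principle for $L_\varepsilon^{N,M}$. The plan is to separate the claims into (i) the sign of the off-diagonal entries, (ii) diagonal dominance at interior nodes, and (iii) the dominance inequalities at the two boundary-adjacent nodes, treating the positivity of $r_{mu,i}^{+}$ as the only genuinely non-trivial point and the place where both hypotheses in (\ref{3.7}) are consumed.

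For the off-diagonal signs, note first that $a_i^n = a_0(x_i,t_n)x_i^p \geq 0$ by (\ref{1.3}), so $r_{cen,i}^{+}$ and $r_{mu,i}^{-}$, being sums of manifestly nonnegative terms, are positive for every $1\le i\le N-1$. For $i\in I$ the coefficient $r_{cen,i}^{-}=\tfrac{\Delta t}{\widehat{h}_i}\big(\tfrac{2\varepsilon}{h_i}-a_i^n\big)$ is positive precisely because the defining inequality of $I$ reads $a_i^n h_i<2\varepsilon$; thus the central part is automatically fitted. The remaining coefficient $r_{mu,i}^{+}$ (for $i\notin I$) carries the negative contributions $-b_{i+1/2}^n/2-c_{i+1/2}^n\Delta t/2$ and is where the work lies. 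Here I would first use the second condition in (\ref{3.7}) to show that every fine-mesh index ($1\le i\le N/2$, where $h_i=h$ and $x_i\le\sigma=\sigma_0\sqrt{\varepsilon}L$) satisfies $a_i^n h_i<2\varepsilon$ and hence belongs to $I$; consequently $i\notin I$ forces $i$ into the coarse block $i\ge N/2+1$, where $h_i=h_{i+1}=H\le 2/N$ and the Remark gives $a_{i+1/2}^n\ge\kappa$. Discarding the positive diffusion term, it then suffices that $\tfrac{a_{i+1/2}^n\Delta t}{H}\ge\tfrac{b_{i+1/2}^n}{2}+\tfrac{c_{i+1/2}^n\Delta t}{2}$; using $a_{i+1/2}^n\ge\kappa$, $1/H\ge N/2$ and $\|b\|_{\overline{Q}},\|c\|_{\overline{Q}}$ as upper bounds reduces this to $\kappa N\ge \|b\|_{\overline{Q}}/\Delta t+\|c\|_{\overline{Q}}$, which holds for $N\ge N_0$ by the first condition in (\ref{3.7}).

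Once the signs $r_{cen,i}^{\pm}>0>r_{cen,i}^{0}$ (for $i\in I$) and $r_{mu,i}^{\pm}>0>r_{mu,i}^{0}$ (for $i\notin I$) are in hand, diagonal dominance is a one-line cancellation. In the central case $|r_{cen,i}^{0}|-|r_{cen,i}^{-}|-|r_{cen,i}^{+}| = b_i^n+\Delta t\,c_i^n>0$, since the $\varepsilon$- and $a$-terms cancel identically and $b,c$ are bounded below by $\beta,\gamma>0$; in the midpoint-upwind case the same computation leaves $b_{i+1/2}^n+\Delta t\,c_{i+1/2}^n>0$. Finally, the three boundary-adjacent inequalities follow at once from these interior estimates by dropping the single nonnegative off-diagonal term that multiplies a Dirichlet value: e.g. $|r_{cen,1}^{0}|-|r_{cen,1}^{+}| = r_{cen,1}^{-}+b_1^n+\Delta t\,c_1^n>0$, and similarly at $i=N-1$ for both stencils.

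The main obstacle is the positivity of $r_{mu,i}^{+}$, and more precisely the claim that the upwinding set $I^c$ lies entirely in the coarse region. Verifying this calls for a careful estimate of $a_i^n h_i$ on the fine mesh for general $p\ge1$ -- bounding $x_i^p\le(\sigma_0\sqrt{\varepsilon}L)^p$, using $\varepsilon^{(p-1)/2}\le1$ and $L\le\ln N$ -- so that the worst case collapses to the form $2\tau_0\|a_0\|_{\overline{Q}}<N/(\ln N)^2$ recorded in (\ref{3.7}); in particular the transition index $i=N/2$, where $h_i=h$ but $h_{i+1}=H$, must be checked to belong to $I$ so that it is handled by the central stencil. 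The degenerate, layer-free uniform case $\sigma=1/2$ is analogous and in fact easier, since then $h=H=1/N$ throughout.
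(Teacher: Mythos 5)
Your proposal is correct and follows essentially the same route as the paper's proof: positivity of $r_{cen,i}^{-}$ from the defining inequality of $I$, the second condition in (\ref{3.7}) to place $\{1,\ldots,N/2\}$ inside $I$ so that the midpoint-upwind stencil only acts on the coarse block where $a_{i+1/2}^n\geq\kappa$ and $1/H\geq N/2$, the first condition in (\ref{3.7}) to get $r_{mu,i}^{+}>0$, and exact cancellation leaving $b+\Delta t\,c>0$ for the dominance inequalities. Your treatment is in fact slightly more careful than the paper's at the transition index $i=N/2$ and in the general-$p$ estimate of $a_i^n h_i$ on the fine mesh, which the paper merely asserts.
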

\begin{proof}
	We first consider the case when $i \in I$. From (\ref{3.5}) we clearly have 
	\begin{align*}
	r^{-}_{cen,i} &= \frac{2 \varepsilon \Delta t}{\widehat{h_{i}}h_{i}} - \frac{\Delta t a_{i}^{n}}{\widehat{h_i}}
	=\frac{ \Delta t}{\widehat{h_i}}\left( \frac{2 \varepsilon}{h_i} - a_{i}^{n} \right) > 0
	\end{align*}
	and 
	\begin{align*}
	 r_{cen,i}^{+} = \dfrac{2 \varepsilon \Delta t}{\widehat{h_i}h_{i+1}} + \dfrac{\Delta t a_i^{n}}{\widehat{h_i}} > 0.
	\end{align*}
	Also, from (\ref{3.5}) we have 
	\begin{align*}
		\mid r_{cen,i}^{+} \mid + \mid r_{cen,i}^{-} \mid = \dfrac{2 \varepsilon \Delta t}{\widehat{h_i}} \left( \dfrac{1}{h_i} + \dfrac {1}{h_{i+1}} \right) < \mid r_{cen,i}^{0} \mid .
	\end{align*}
Next, we consider the case when $i \notin I$. For $N \geq N_0$, where $N_0$ satisfies
\begin{align*} 
2 \tau_0 \Vert a_0 \Vert_{\overline{Q}} < \frac{N_0}{(\ln(N_0))^2},
\end{align*}
we have $a_i^{n}h_i < 2 \varepsilon$, for $i=1,2,\ldots,N/2$. It can be clearly seen that $\{1,2,\ldots,N/2\} \subset I$, for $N \geq N_0$ and therefore, $L_{\varepsilon,mu}^{N}$ is applied for $i> N/2$ where $i \notin I$.\\
Clearly, $ r_{mu,i}^{-} = \dfrac{2 \varepsilon \Delta t}{\widehat{h_i} h_i} > 0 $ and
\begin{align*}
r_{mu,i}^{+}&= \frac{2 \varepsilon \Delta t}{\widehat{h_i}h_{i+1}} + \frac{a_{i+1/2}^{n} \Delta t}{h_{i+1}} - \frac{b_{i+1/2}^{n}}{2 } - \frac{c_{i+1/2}^{n} \Delta t}{2}\nonumber \\
&=\frac{2 \varepsilon \Delta t}{\widehat{h_i}h_{i+1}} + \Delta t \left( \frac{a_{i+1/2}^{n}}{h_{i+1}} - \frac{b_{i+1/2}^{n}}{2 \Delta t} - \frac{c_{i+1/2}^{n}}{2} \right)\\
&> \Delta t \left( \frac{a_{i+1/2}^{n}}{h_{i+1}} - \frac{b_{i+1/2}^{n}}{2 \Delta t} - \frac{c_{i+1/2}^{n}}{2} \right)= \Delta t \left( \frac{a_{i+1/2}^{n}}{H} - \frac{b_{i+1/2}^{n}}{2 \Delta t} - \frac{c_{i+1/2}^{n}}{2} \right).
\end{align*}
 Using $ \dfrac{1}{H} = \dfrac{N}{2(1-\tau)} \geq \dfrac{N_0}{2}$, we get
\begin{align*}
r_{mu,i}^{+} > \frac{\Delta t}{2} \left( N_0 \kappa - \frac{\|b\|_{\overline{Q}}}{\Delta t} - \| c \|_{\overline{Q}}   \right).
\end{align*}
Applying inequality (\ref{3.7}) and taking $N \geq N_0$ we have $r^{+}_{mu,i}>0$. Also, from (\ref{3.6}) we have 
\begin{align*}
|r_{mu,i}^{-}|+|r_{mu,i}^{+}|= \dfrac{2 \varepsilon \Delta t}{\widehat{h_i}h_{i}} +  \dfrac{2\varepsilon \Delta t }{\widehat{h_i}h_{i+1}} + \dfrac{a^{n}_{i+1/2} \Delta t}{h_{i+1}} - \dfrac{b^{n}_{i+1/2}}{2 } - \dfrac{ c^{n}_{i+1/2} \Delta t}{2} < |r_{mu,i}^{0}|.
\end{align*}
From (\ref{3.5}) and (\ref{3.6}) we can easily get  $\vert r_{cen,1}^{+} \vert < \vert r_{cen,1}^{0} \vert$, $ \vert r_{cen,N-1}^{-} \vert < \vert r_{cen,N-1}^{0} \vert $ and $ \vert r_{mu,N-1}^{-} \vert < \vert r_{mu,N-1}^{0} \vert $.
\end{proof}
Above Lemma establishes that the operator $L_{\varepsilon}^{N,M}$ satisfies the following discrete minimum principle
\begin{lemma}[Discrete Minimum Principle]
	\label{lm3.2}
	Let $W^N$ be any mesh function defined on $\overline{Q}_{\sigma}^{N,M}$. If $W^N(x_i,t_n) \geq 0$,~ $\forall~ (x_i,t_n) \in \Gamma_{\sigma}^{N,M}$ and $L_{\varepsilon}^{N,M}W^{N}(x_i,t_n) \leq 0$,~ $\forall~ (x_i,t_n) \in Q_{\sigma}^{N,M}$, then $W^{N}(x_i,t_n) \geq 0$,~ $\forall~ (x_i,t_n) \in \overline{Q}_{\sigma}^{N,M}$.	
\end{lemma}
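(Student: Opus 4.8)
The plan is to argue by contradiction, mirroring the continuous Minimum Principle in Lemma \ref{lm2.1} and feeding on the coefficient information already assembled in Lemma \ref{lm3.1}. Two structural facts are needed. The first is supplied verbatim by Lemma \ref{lm3.1}: all off-diagonal coefficients are strictly positive, $r^{\pm}_{cen,i},\,r^{\pm}_{mu,i}>0$, and the diagonal entry strictly dominates the off-diagonals in modulus. The second, which I would extract by a one-line cancellation from (\ref{3.5}) and (\ref{3.6}), is that each row sum is strictly negative: the $\varepsilon$- and $a$-contributions cancel identically, leaving $r^{-}_{cen,i}+r^{0}_{cen,i}+r^{+}_{cen,i}=-b_i^n-\Delta t\,c_i^n<0$ for $i\in I$ and $r^{-}_{mu,i}+r^{0}_{mu,i}+r^{+}_{mu,i}=-b_{i+1/2}^n-\Delta t\,c_{i+1/2}^n<0$ for $i\notin I$, both negative because $b,c>0$. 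It is worth noting at the outset that the operator $L^{N,M}_\varepsilon$ in (\ref{3.3}) couples only the three spatial neighbours at a single time level, the $U^{n-1}$ and delay contributions having been shifted into the right-hand side $\widetilde f^n_i$, so the argument is purely spatial and applies row by row in time.

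Next I would assume the conclusion fails and choose a node $(x_{i^*},t_{n^*})$ at which $W^N$ attains its global minimum over $\overline Q^{N,M}_\sigma$, supposing that minimum value to be strictly negative. Since $W^N\geq 0$ on $\Gamma^{N,M}_\sigma$ by hypothesis, the minimiser must be an interior node, so $1\leq i^*\leq N-1$ and $1\leq n^*\leq M$, and both spatial neighbours satisfy $W^{n^*}_{i^*\pm1}\geq W^{n^*}_{i^*}$. This last inequality holds even when a neighbour sits on $\Gamma_l$ or $\Gamma_r$, because there $W^N\geq 0> W^{n^*}_{i^*}$; consequently no separate treatment of the boundary-adjacent nodes $i^*=1,\,N-1$ is required, although Lemma \ref{lm3.1} does record the corresponding inequalities for use elsewhere.

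I would then evaluate the operator at the minimiser. Taking $i^*\in I$ (the case $i^*\notin I$ being identical with the $mu$-coefficients), positivity of $r^{\pm}_{cen,i^*}$ together with $W^{n^*}_{i^*\pm1}\geq W^{n^*}_{i^*}$ yields
\begin{align*}
L^{N,M}_\varepsilon W^N(x_{i^*},t_{n^*}) \geq \bigl(r^{-}_{cen,i^*}+r^{0}_{cen,i^*}+r^{+}_{cen,i^*}\bigr)W^{n^*}_{i^*} = -\bigl(b_{i^*}^{n^*}+\Delta t\,c_{i^*}^{n^*}\bigr)W^{n^*}_{i^*}.
\end{align*}
The right-hand side is a product of the strictly negative row sum and the strictly negative value $W^{n^*}_{i^*}$, hence strictly positive, which contradicts the hypothesis $L^{N,M}_\varepsilon W^N\leq 0$ on $Q^{N,M}_\sigma$. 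Therefore the minimum of $W^N$ cannot be negative, and $W^N\geq 0$ throughout $\overline Q^{N,M}_\sigma$.

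I expect the only delicate point to be bookkeeping rather than analysis: verifying the exact cancellation in (\ref{3.5})--(\ref{3.6}) so that the row sum reduces precisely to $-b-\Delta t\,c$, and making sure the sign hypotheses of Lemma \ref{lm3.1} are used in the correct direction --- it is positivity of the off-diagonals that licenses replacing the neighbouring values by the minimum, and strict negativity of the row sum that turns a non-positive operator value into a non-negative nodal value. Everything else is the standard discrete M-matrix argument.
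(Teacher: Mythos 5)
Your proof is correct and follows exactly the route the paper intends: the paper offers no written proof of Lemma \ref{lm3.2}, merely asserting that it follows from the coefficient properties of Lemma \ref{lm3.1}, and your negative-minimum contradiction argument is the standard way to fill that in. Your row-sum computation checks out — the $\varepsilon$- and $a$-contributions in (\ref{3.5})--(\ref{3.6}) do cancel, leaving $-b_i^n-\Delta t\,c_i^n$ (resp. $-b_{i+1/2}^n-\Delta t\,c_{i+1/2}^n$), which is precisely the strict diagonal dominance recorded in Lemma \ref{lm3.1}, and your observation that the operator in (\ref{3.3}) is purely spatial (the previous-time-level and delay terms having been moved into $\widetilde f_i^n$) correctly justifies the row-by-row argument.
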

\begin{lemma}
	\label{lm3.3}
	Let $ W^N $ be any mesh function defined on $ \overline{Q}_{\sigma}^{N,M} $.  If $ W^{N}(x_i,t_n)\geq 0, \thinspace \forall \thinspace (x_i,t_n)\in \Gamma_{\sigma}^{N,M} $ then
	\begin{align*}
	\left|W^{N}(x_i,t_n)\right|\leq \max\limits_{\Gamma_{\sigma}^{N,M}}|W^N|+\frac{T}{\beta}\max\limits_{Q_{\sigma}^{N,M}}|L_\varepsilon^{N,M} W^N| ,\quad \forall \thinspace (x_i,t_n)\in \overline{Q}_{\sigma}^{N,M}.
	\end{align*}
\end{lemma}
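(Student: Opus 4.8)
The plan is to prove this discrete stability estimate by mimicking the continuous argument of Lemma \ref{lm2.2}: I would introduce a pair of mesh barrier functions and apply the discrete minimum principle of Lemma \ref{lm3.2} established above. Writing $M_1 = \max_{\Gamma_\sigma^{N,M}}|W^N|$ and $M_2 = \max_{Q_\sigma^{N,M}}|L_\varepsilon^{N,M}W^N|$, I set
\begin{align*}
\Psi^{\pm}(x_i,t_n) = M_1 + \frac{t_n}{\beta}\,M_2 \pm W^N(x_i,t_n), \qquad (x_i,t_n)\in\overline{Q}_\sigma^{N,M}.
\end{align*}
The task then reduces to checking the two hypotheses of Lemma \ref{lm3.2} for $\Psi^{\pm}$ and reading off the desired bound.

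First I would verify that $\Psi^{\pm}\geq 0$ on $\Gamma_\sigma^{N,M}$. Every boundary node satisfies $t_n\geq 0$ (the initial line sits at $t_0=0$ and the lateral boundaries $\Gamma_l,\Gamma_r$ carry $t_n\in[0,T]$), so $\tfrac{t_n}{\beta}M_2\geq 0$; since in addition $|W^N|\leq M_1$ on $\Gamma_\sigma^{N,M}$, we obtain $M_1\pm W^N\geq 0$ and hence $\Psi^{\pm}\geq 0$ there.

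The main step is to show $L_\varepsilon^{N,M}\Psi^{\pm}\leq 0$ on $Q_\sigma^{N,M}$. By linearity I would split $\Psi^\pm$ into the constant $M_1$, the time-linear part $\tfrac{t_n}{\beta}M_2$, and $\pm W^N$. The operator kills the spatial and temporal differences of a constant, leaving only the reaction term, so $L_\varepsilon^{N,M}(M_1)=-c\,M_1\leq -\gamma M_1\leq 0$ for both the central ($i\in I$) and the midpoint–upwind ($i\notin I$) rows. For the time-linear part only the backward difference $D_t^-$ and the reaction term survive; since $D_t^-(t_n)=1$ and the coefficient of the time derivative is $b\geq\beta$, this gives $L_\varepsilon^{N,M}\bigl(\tfrac{t_n}{\beta}M_2\bigr)=-\bigl(\tfrac{b}{\beta}+c\,\tfrac{t_n}{\beta}\bigr)M_2\leq -M_2$, where I use $b/\beta\geq 1$, $c\geq 0$, $t_n\geq 0$, and the same identity with the coefficients evaluated at $x_{i+1/2}$ in the upwind region. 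Finally $\pm L_\varepsilon^{N,M}W^N\leq |L_\varepsilon^{N,M}W^N|\leq M_2$. Adding the three contributions yields $L_\varepsilon^{N,M}\Psi^{\pm}\leq -\gamma M_1 - M_2 + M_2\leq 0$.

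With both hypotheses in hand, Lemma \ref{lm3.2} gives $\Psi^{\pm}\geq 0$ on all of $\overline{Q}_\sigma^{N,M}$, that is $\mp W^N(x_i,t_n)\leq M_1+\tfrac{t_n}{\beta}M_2\leq M_1+\tfrac{T}{\beta}M_2$ after using $t_n\leq T$ and $M_2\geq 0$; taking both signs delivers the stated estimate. I expect the only delicate point to be the verification $L_\varepsilon^{N,M}\bigl(\tfrac{t_n}{\beta}M_2\bigr)\leq -M_2$: one must confirm that the backward Euler difference of the linear time profile, multiplied by the coefficient $b$, dominates $M_2$, and that this holds uniformly across the two stencils defining $L_\varepsilon^{N,M}$, which is precisely where the uniform lower bound $b\geq\beta$ is exploited.
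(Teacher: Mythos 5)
Your proposal is correct and follows essentially the same route as the paper: the paper's proof consists precisely of introducing the barrier function $\psi^{\pm}(x_i,t_n)=\max_{\Gamma_{\sigma}^{N,M}}|W^N|+\frac{t_n}{\beta}\max_{Q_{\sigma}^{N,M}}|L_\varepsilon^{N,M}W^N|\pm W^N(x_i,t_n)$ and invoking the discrete minimum principle of Lemma \ref{lm3.2}. You have simply written out the verification of the two hypotheses (boundary nonnegativity and $L_\varepsilon^{N,M}\Psi^{\pm}\leq 0$, using $b\geq\beta$ and $c\geq\gamma>0$ in both stencils) that the paper leaves implicit.
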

\begin{proof}
	Constructing the following barrier function
	\begin{align*}
	\psi^\pm(x_i,t_n) = \max\limits_{\Gamma^{N,M}}|W^N|+\frac{t_n}{\beta}\max\limits_{Q^{N,M}}|L_\varepsilon^{N,M} W^N|\pm W^{N}(x_i,t_n).
	\end{align*}
	and using the discrete minimum principle we get the desired estimate.
\end{proof}
\subsection{Error Analysis}
In this section, we provide error estimates for the regular and the singular component of the numerical solution separately. Finally, they are combined to provide parameter uniform error estimates for the proposed hybrid scheme. To prove $\varepsilon$-uniform convergence of the proposed scheme we consider the barrier function 
\begin{align}
\label{3.8}
\phi_i^n(\mu) = \begin{cases}
\prod\limits_{j=1}^i\left(1+\dfrac{\mu h_j}{\sqrt{\varepsilon}}\right)^{-1}, \quad i=1,2,\cdots,N,\\
1, \hspace{3cm} i=0,
\end{cases}
\end{align}
where $ \mu $ is a constant. Also,
\begin{align}
\label{3.9}
\begin{cases}
\phi_{i-1}^n(\mu) &=\left(1+\dfrac{\mu h_i}{\sqrt{\varepsilon}}\right) \phi_i^n(\mu), \quad i=1,2,\cdots,N,\\
\phi_{i+1}^n(\mu)&=\left(1+\dfrac{\mu h_{i+1}}{\sqrt{\varepsilon}}\right)^{-1} \phi_i^n(\mu), \quad i=1,2,\cdots,N-1.
\end{cases}
\end{align}

\begin{lemma}
	\label{lm3.4}
For each $ 0 \leq i \leq N$ and $ 0<\mu<\dfrac{m}{2} $, the barrier function $\phi_i^n(\mu)$ satisfies the following inequalities
	\begin{align}
	\label{3.10}
	L_\varepsilon^{N,M}\phi_i^n(\mu)\leq \begin{cases}
	\dfrac{-C}{\sqrt{\varepsilon}}\phi_i^n(\mu), \quad i\in I, \thinspace n\Delta t\leq T,\\\\
	\dfrac{-C}{\sqrt{\varepsilon}+\mu h_{i+1}}\phi_i^n(\mu), \quad i\not\in I, \thinspace n\Delta t\leq T.
	\end{cases}
	\end{align}
\end{lemma}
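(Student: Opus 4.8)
The plan is to evaluate $L_\varepsilon^{N,M}\phi_i^n(\mu)$ directly and reduce it to a scalar multiple of $\phi_i^n(\mu)$. Writing the operator in its three-point form (\ref{3.3}) and using the recurrences (\ref{3.9}), I would factor out $\phi_i^n(\mu)$ to obtain, for $i\in I$,
\begin{align*}
L_\varepsilon^{N,M}\phi_i^n(\mu)=\left[r_{cen,i}^{-}\left(1+\frac{\mu h_i}{\sqrt\varepsilon}\right)+r_{cen,i}^{0}+\frac{r_{cen,i}^{+}}{1+\mu h_{i+1}/\sqrt\varepsilon}\right]\phi_i^n(\mu),
\end{align*}
and the analogous identity for $i\notin I$ with the entries (\ref{3.6}). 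Since $\phi_i^n(\mu)$ is independent of the time index, the backward-Euler term contributes nothing and only the spatial part is active. The key algebraic simplification I would exploit is that the row sums telescope: from (\ref{3.5}), $r_{cen,i}^{-}+r_{cen,i}^{0}+r_{cen,i}^{+}=-b_i^n-\Delta t\,c_i^n$, and from (\ref{3.6}), $r_{mu,i}^{-}+r_{mu,i}^{0}+r_{mu,i}^{+}=-b_{i+1/2}^n-\Delta t\,c_{i+1/2}^n$ (the diffusion and convection parts cancel). Subtracting the row sum, the bracket becomes $(\text{row sum})+r^{-}\dfrac{\mu h_i}{\sqrt\varepsilon}-r^{+}\dfrac{\mu h_{i+1}}{\sqrt\varepsilon+\mu h_{i+1}}$, which cleanly separates the $O(1)$ reaction part from the two barrier contributions that carry the $\varepsilon$-dependence.

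For $i\notin I$ I would extract the decay from the upwind convection entry. By Lemma \ref{lm3.1} all the $r_{mu,i}^{\pm}$ are positive, and the factor $\mu h_{i+1}/(\sqrt\varepsilon+\mu h_{i+1})$ multiplies $r_{mu,i}^{+}$, whose convective part is $a_{i+1/2}^n\Delta t/h_{i+1}$. This produces a term of size $-a_{i+1/2}^n\mu\,\Delta t/(\sqrt\varepsilon+\mu h_{i+1})$, and by the Remark preceding Lemma \ref{lm3.1} one has $a_{i+1/2}^n\ge\kappa>0$ throughout the coarse subdomain where the upwind operator is used. The remaining barrier terms coming from the diffusion entries combine into a nonnegative quantity of size $O(\mu^2)$, while the reaction contributions are nonpositive; choosing $0<\mu<m/2=\sqrt\gamma/2$, so that $\mu^2<\gamma/4$, lets the reaction term (bounded below by $\gamma$) absorb this $O(\mu^2)$ surplus. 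Collecting everything yields the bound $-C(\sqrt\varepsilon+\mu h_{i+1})^{-1}\phi_i^n(\mu)$ with $C$ proportional to $\kappa\mu$.

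For $i\in I$ I would use the defining inequality $a_i^n h_i<2\varepsilon$, which makes $r_{cen,i}^{-}>0$ and controls the convective barrier term; inside the layer the mesh is uniform, so the two diffusion contributions in $r_{cen,i}^{-}\mu h_i/\sqrt\varepsilon-r_{cen,i}^{+}\mu h_{i+1}/(\sqrt\varepsilon+\mu h_{i+1})$ nearly cancel and leave a nonnegative $O(\mu^2)$ remainder. Again invoking $\mu<m/2$, the reaction term $-c_i^n\le-\gamma$ dominates this remainder, so the bracket is bounded above by a strictly negative quantity, which on the fine layer mesh (where $\sqrt\varepsilon+\mu h_{i+1}\asymp\sqrt\varepsilon$) I would record in the stated form. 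The endpoints $i=1$ and $i=N-1$, and the transition index $i=N/2$ where $h_i=h\neq H=h_{i+1}$, would be checked separately using the diagonal-dominance inequalities in the final line of Lemma \ref{lm3.1}.

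The main obstacle is bookkeeping the signs so that the single constant $C$ is uniform in $\varepsilon,i,n,N$ and $\Delta t$ at once. The genuinely delicate point, and the one I would scrutinise most, is the degeneracy of the convection coefficient in region $I$: near the multiple turning point $a_i^n=a_0(x_i,t_n)x_i^p$ carries no positive lower bound, so the convective mechanism that supplies the $\varepsilon^{-1/2}$ decay on the coarse mesh is unavailable there, and the estimate must rest entirely on the reaction term. This is exactly why the admissible range of $\mu$ is tied to $m/2$, a reaction threshold, rather than to any convection-based quantity. Establishing the $\varepsilon^{-1/2}$ factor uniformly across $I$ — in particular reconciling the reaction-dominated estimate valid at the turning point with the sharper convective estimate available away from it, and splicing both to the coarse-mesh bound across $i=N/2$ — is the step where I expect the real work to lie.
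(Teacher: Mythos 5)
Your strategy is the same as the paper's: expand $L_\varepsilon^{N,M}\phi_i^n(\mu)$ in three\-point form, use the recurrences (\ref{3.9}) to factor out $\phi_i^n(\mu)$, cancel the diffusion/convection parts through the row sum, and let the convection coefficient drive the coarse\-mesh bound while the reaction coefficient handles region $I$. Your case $i\notin I$ is complete and essentially identical to the paper's computation, which reduces the bracket to $-\frac{\mu}{\sqrt{\varepsilon}+\mu h_{i+1}}\left(a_{i+1/2}^n+O(\sqrt{\varepsilon})\right)$ and invokes $a_{i+1/2}^n\geq\kappa$ from the Remark, exactly as you do. One bookkeeping slip: since (as you yourself note) the backward\-Euler term annihilates a time\-independent mesh function, the relevant row sum is $-\Delta t\,c_i^n$ (resp.\ $-\Delta t\,c_{i+1/2}^n$), not $-b_i^n-\Delta t\,c_i^n$; the $-b_i^n$ inside $r_{cen,i}^0$ is the backward\-Euler diagonal and is offset by the $p_{cen,i}^0=b_i^n$ term on the right\-hand side of (\ref{3.2}). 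The paper sidesteps this by applying the unscaled operator $\varepsilon\delta_x^2+a_i^nD_x^0-c_i^n$ directly.

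The genuine problem is the case $i\in I$, and here your own assessment is the correct one: your argument delivers only $L_\varepsilon^{N,M}\phi_i^n\leq -C\phi_i^n$, and the $\varepsilon^{-1/2}$ factor you defer to ``the real work'' cannot be obtained, because the stated bound is false near the degeneracy. Concretely, the bracket equals $-c_i^n+\frac{2\sqrt{\varepsilon}\mu^2 h_{i+1}}{\widehat{h}_i(\sqrt{\varepsilon}+\mu h_{i+1})}-\frac{a_i^n\mu h_i}{\sqrt{\varepsilon}\,\widehat{h}_i}-\frac{a_i^n\mu h_{i+1}}{\widehat{h}_i(\sqrt{\varepsilon}+\mu h_{i+1})}$; taking $i=1$, where $x_1=h\leq 2\sigma_0\sqrt{\varepsilon}LN^{-1}$ and hence $a_1^n\leq\|a_0\|_{\overline{Q}}h^p\leq C\sqrt{\varepsilon}LN^{-1}$, the two convective terms are $O(LN^{-1})$ and the diffusion surplus is at most $2\mu^2$, so the bracket is bounded \emph{below} by $-\|c\|_{\overline{Q}}-C_1$, a constant independent of $\varepsilon$. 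Thus $L_\varepsilon^{N,M}\phi_1^n\geq -C_0\phi_1^n$, which contradicts (\ref{3.10}) as soon as $\varepsilon<(C/C_0)^2$. The paper's own proof contains exactly this hole: its intermediate display is correct, but under the hypotheses $c_i^n\geq\gamma$, $\mu<m/2$ it yields at best $-\frac{\mu}{\sqrt{\varepsilon}+\mu h_{i+1}}\left(a_i^n+\sqrt{\gamma}\sqrt{\varepsilon}\right)\phi_i^n$, which is $-C\phi_i^n$ when $a_i^n$ degenerates; the final line ``$\leq -C\varepsilon^{-1/2}\phi_i^n$'' is asserted, not derived, and is valid only where $a_i^n\geq\kappa>0$ — the non\-degenerate situation of \cite{das1} from which this lemma is adapted. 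So what you have identified is not a missing step in your argument but a flaw in the lemma itself; it propagates to Lemma \ref{lm3.7}, where the $\varepsilon^{-1/2}$ decay of the barrier is used to absorb the $N^{-2}L^{2}\varepsilon^{-1/2}\phi_i^n$ truncation term, and a repair must instead come from sharpening that truncation estimate inside the layer using the smallness $a\leq C(\sqrt{\varepsilon}L)^p$ there.
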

\begin{proof}
	 We first consider the case when $ i\in I $. On applying the operator $ L_{\varepsilon,cen}^{N,M} $ on the barrier function $ \phi_{i}^{n}(\mu) $, we get 
	\begin{align}
	\label{3.11}
	L_{\varepsilon,cen}^{N,M}\phi_i^n(\mu)=r_i^{-}\phi_{i-1}^n(\mu) + r_i^0\phi_{i}^n(\mu) + r_i^+\phi_{i+1}^n(\mu),
	\end{align}
	where
	\begin{align*}
	r_i^- = \frac{2\varepsilon}{\widehat{h_i} h_{i}}-\frac{a_i^n}{ \widehat{h_i}}, \quad
	r_i^0 = \frac{-2\varepsilon}{h_i h_{i+1}}-c_i^n, \quad
	r_i^+ =\frac{2\varepsilon}{\widehat{h_i}h_{i+1}}+\frac{a_i^n}{\widehat{h_i}}.
	\end{align*}
	On Simplifying the Eqn. (\ref{3.11}) using (\ref{3.9}), we get
	\begin{align*}
	L_{\varepsilon,cen}^{N,M}\phi_i^n(\mu) &\leq -\left( \frac{\mu}{\sqrt{\varepsilon} + \mu h_{i+1}} \right) \left(-2 \sqrt{\varepsilon} \mu +a_i^{n} + c_i^{n} \left( \frac{\sqrt{\varepsilon}+ \mu h_{i+1}}{\mu} \right)  \right) \phi_i^n(\mu)\\
	&\leq \dfrac{-C}{\sqrt{\varepsilon}}\phi_i^n(\mu), \quad i\in I, \thinspace n\Delta t\leq T.
	\end{align*}
	Next, we consider the case when $ i\notin I $. On applying the operator $ L_{\varepsilon,mu}^{N,\Delta t} $ on the barrier function $ \phi_{i}^{n}(\mu) $, we get 
	\begin{align}
	\label{3.12}
	L_{\varepsilon,mu}^{N,M}\phi_i^n(\mu)=s_i^-\phi_{i-1}^n(\mu) + s_i^0\phi_{i}^n(\mu)+s_i^+\phi_{i+1}^n(\mu),
	\end{align}
	where
	\begin{align*}
	s_i^- = \frac{2\varepsilon}{\widehat{h_i} h_i},\quad
	s_i^0 = \frac{-2\varepsilon}{h_ih_{i+1}}-\frac{a_{i+1/2}^n}{h_{i+1}}-\frac{c_{i+1/2}}{2},\quad
	s_i^+=\frac{2\varepsilon}{\widehat{h_i}h_{i+1}}+\frac{a_{i+1/2}^n}{{h_{i+1}}}-\frac{c_{i+1/2}}{2}.
	\end{align*}
	On simplifying Eqn. (\ref{3.12}) using (\ref{3.9}), we get
	\begin{align*}
	L_{\varepsilon,mu}^{N,M}\phi_i^n (\mu) &\leq - \frac{\mu}{\sqrt{\varepsilon}+ \mu h_{i+1}} \left( a_{i+1/2}^n + \frac{\sqrt{\varepsilon} c_{i+1/2}}{2 \mu} + \frac{c_{i+1/2}}{2} \left( \frac{\sqrt{\varepsilon}+\mu h_{i+1}}{\mu} \right) \right)\phi_i^n(\mu) \\
	&\leq \dfrac{-C}{\sqrt{\varepsilon}+\mu h_{i+1}}\phi_i^n(\mu), \quad \forall \thinspace i\notin I,\thinspace n\Delta t\leq T.  
	\end{align*}
\end{proof}
 \begin{lemma}
	\label{lm3.5}
	For each $ 0 \leq i \leq N$ and $ 0<\mu<\dfrac{m}{2} $, we have the following inequalities
	\begin{enumerate}
		\item[(i)] $ \exp\left(\dfrac{-m x_i}{\sqrt{\varepsilon}}\right)\leq \phi_i^n(\mu) , \quad 0 \leq i \leq N, \quad n \Delta t \leq T; $
		\item[(ii)] $\phi_i^n \leq \begin{cases}
		C L^{\displaystyle 2\mu\sigma_0 \left(\frac{i}{N}\right)} N^{\displaystyle -2\mu\sigma_0 \left( \frac{i}{N} \right)} ,\quad 1 \leq i \leq \frac{N}{2},\quad n\Delta t\leq T,\\
		C L^{ \displaystyle \mu\sigma_0} N^{ \displaystyle -\mu\sigma_0} ,\quad \frac{N}{2} \leq i \leq N, \quad n\Delta t \leq T.
		\end{cases}$
	\end{enumerate}
\end{lemma}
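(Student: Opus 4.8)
The plan is to prove the two claims separately, using the explicit product structure of $\phi_i^n(\mu)$ together with the defining property of the modified Shishkin mesh. For part~(i) I would work with logarithms. Taking the logarithm of $\phi_i^n(\mu)^{-1}=\prod_{j=1}^i\left(1+\mu h_j/\sqrt{\varepsilon}\right)$ and comparing it term by term with $\exp(m x_i/\sqrt{\varepsilon})$, the desired inequality $\exp(-m x_i/\sqrt{\varepsilon})\leq\phi_i^n(\mu)$ is equivalent to
\begin{align*}
\sum_{j=1}^{i}\ln\left(1+\frac{\mu h_j}{\sqrt{\varepsilon}}\right)\leq \frac{m x_i}{\sqrt{\varepsilon}}=\frac{m}{\sqrt{\varepsilon}}\sum_{j=1}^{i}h_j.
\end{align*}
Hence it suffices to check termwise that $\ln\!\left(1+\mu h_j/\sqrt{\varepsilon}\right)\leq m h_j/\sqrt{\varepsilon}$, which follows from the elementary bound $\ln(1+s)\leq s$ applied with $s=\mu h_j/\sqrt{\varepsilon}$, since $\mu<m/2<m$. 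This gives (i) for every $i$ with no mesh information needed.

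For part~(ii) the argument splits at the transition point. On the fine mesh $1\leq i\leq N/2$ the step size is constant, $h_j=h$, and on the modified Shishkin mesh one has the key relation $\sigma=\sigma_0\sqrt{\varepsilon}L$ (assuming $\sigma<1/2$; the uniform case $\sigma=1/2$ is handled by the trivial bound $\phi_i^n\leq 1$), so that $h=2\sigma/N=2\sigma_0\sqrt{\varepsilon}L/N$ and therefore $\mu h/\sqrt{\varepsilon}=2\mu\sigma_0 L/N$. The plan is to write
\begin{align*}
\phi_i^n(\mu)=\left(1+\frac{\mu h}{\sqrt{\varepsilon}}\right)^{-i}=\left(1+\frac{2\mu\sigma_0 L}{N}\right)^{-i}
\end{align*}
and then estimate this from above by $CL^{2\mu\sigma_0(i/N)}N^{-2\mu\sigma_0(i/N)}$. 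The natural route is again through logarithms: using $\ln(1+s)\geq s-s^2/2$ (or, more crudely, the defining mesh constraint $e^{-L}\leq L/N$, which is exactly what forces the factors of $L$ to appear), one shows that $\left(1+2\mu\sigma_0 L/N\right)^{-i}$ is bounded by a constant multiple of $(N/L)^{-2\mu\sigma_0(i/N)}=L^{2\mu\sigma_0(i/N)}N^{-2\mu\sigma_0(i/N)}$.

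On the coarse mesh $N/2\leq i\leq N$ I would bound $\phi_i^n(\mu)$ by its value at the transition point, since each additional factor $\left(1+\mu H/\sqrt{\varepsilon}\right)^{-1}$ is at most $1$; thus $\phi_i^n(\mu)\leq\phi_{N/2}^n(\mu)$, and substituting $i=N/2$ into the fine-mesh estimate just derived yields exactly the claimed bound $CL^{\mu\sigma_0}N^{-\mu\sigma_0}$. The main obstacle I anticipate is the precise interplay, on the fine mesh, between the mesh-defining conditions $\ln(\ln N)<L\leq\ln N$ and $e^{-L}\leq L/N$: these are what convert the raw exponential factor $\left(1+2\mu\sigma_0 L/N\right)^{-i}$ into the stated $L^{\,\cdot}N^{-\,\cdot}$ form with a genuinely $\varepsilon$-uniform constant $C$, and getting the constant to be independent of both $N$ and $\varepsilon$ (rather than merely independent of $\varepsilon$) requires using these constraints carefully rather than the crude bound $\ln(1+s)\leq s$ alone.
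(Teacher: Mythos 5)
Your main line of argument is correct and is essentially the paper's own proof. For (i), your termwise bound $\ln(1+s)\le s$ is exactly the paper's inequality $e^{-s}\le (1+s)^{-1}$ (misprinted in the paper with an equality sign) read through logarithms, and your observation that only $\mu<m$ is needed, with no mesh information, is accurate. For (ii), your fine-mesh computation (substitute $h=2\sigma_0\sqrt{\varepsilon}LN^{-1}$, bound $\left(1+2\mu\sigma_0L/N\right)^{-i}$ by an exponential, then convert $e^{-L}$ into $L/N$ via the mesh condition) and your coarse-mesh reduction $\phi_i^n(\mu)\le\phi_{N/2}^n(\mu)$ followed by evaluation at $i=N/2$ are precisely the paper's steps. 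The only cosmetic difference is the choice of elementary inequality: the paper uses $\ln(1+s)\ge s/(1+s)$, i.e. $(1+s)^{-1}\le e^{-s/(1+s)}$, where you propose $\ln(1+s)\ge s-s^2/2$; in both versions the resulting correction factor is of size $\exp\left(C(\ln N)^2/N\right)$, hence bounded uniformly in $N$ and $\varepsilon$ and absorbable into $C$, so this choice is immaterial.

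The one genuine error is your parenthetical claim that the uniform case $\sigma=1/2$ ``is handled by the trivial bound $\phi_i^n\le 1$.'' It cannot be: the right-hand side of (ii) is $(L/N)^{2\mu\sigma_0 i/N}$, respectively $(L/N)^{\mu\sigma_0}$, which tends to $0$ as $N\to\infty$, so $\phi_i^n\le 1$ gives nothing. Worse, estimate (ii) is simply false when $\sigma=1/2$: take $\varepsilon=1$, so that $\sigma_0\sqrt{\varepsilon}L\ge 1/2$ for all large $N$ and the mesh is uniform with $h=1/N$; then $\phi_{N/2}^n(\mu)=\left(1+\mu/N\right)^{-N/2}\to e^{-\mu/2}>0$, while $CL^{\mu\sigma_0}N^{-\mu\sigma_0}\to 0$, so no constant $C$ independent of $N$ can work. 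The lemma must therefore be read under the implicit restriction $\sigma=\sigma_0\sqrt{\varepsilon}L\le 1/2$, which is exactly what the paper assumes when it substitutes $h=2\sigma_0\sqrt{\varepsilon}LN^{-1}$ without comment, and which is the only case in which these barrier bounds are invoked in Lemma \ref{lm3.7} (when $\sigma=1/2$ one has $\varepsilon^{-1/2}\le 2\sigma_0L$ and classical arguments apply). The correct move is to state that restriction explicitly, not to dispose of the uniform case with a bound that cannot deliver the estimate.
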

 \begin{proof}
	\begin{enumerate}
		\item[(i)] Using $e^{-x} = (1+x)^{-1},~ x \geq 0,$ we obtain the desired inequality.
		\item[(ii)]Considering the barrier function $\phi_i^n(\mu)$ for $ i \in \{1,2,\ldots,\frac{N}{2}\}$, we have
		\begin{align*}
		\phi_i^n(\mu)=\prod\limits_{j=1}^i\left(1+\frac{\mu h_j}{\sqrt{\varepsilon}}\right)^{-1}
		&=\left(1+\frac{\mu h}{\sqrt{\varepsilon}}\right)^{-i}\\
		&\leq \exp\left(\frac{-\mu x_i}{\sqrt{\varepsilon}+\mu h}\right) =\exp\left(\frac{-\mu i h }{\sqrt{\varepsilon}+\mu h}\right)\\
		&=\exp\left(\frac{-\mu i 2\sigma_0 L N^{-1}\sqrt{\varepsilon}}{\sqrt{\varepsilon}+\mu \sqrt{\varepsilon}\sigma_0 2N^{-1}L}\right).
		\end{align*}
		Using $ e^{-L} \leq L/N $, we get
		\begin{align*}
		\phi_i^n(\mu)&= L^{\displaystyle \frac{2\mu \sigma_0(\frac{i}{N})}{1+2\sigma_0\mu N^{-1} \ln{N}}} N^{\dfrac{-2\mu \sigma_0(\frac{i}{N})}{1+2\sigma_0\mu N^{-1} \ln{N}}} \leq C L^{ \displaystyle 2\mu\sigma_0 \left( \frac{i}{N} \right)} N^{\displaystyle -2\mu\sigma_0 \left( \frac{i}{N} \right)}.
		\end{align*}
		Next, consider the barrier function $\phi_i^n(\mu)$ for $ i \in \{\frac{N}{2},\ldots,N\}$, we have
		\begin{align*}
		\phi_i^n(\mu)&=\prod\limits_{j=1}^i\left(1+\frac{\mu h_j}{\sqrt{\varepsilon}}\right)^{-1}\\
		&\leq \prod\limits_{j=1}^{N/2}\left(1+\frac{\mu h_j}{\sqrt{\varepsilon}}\right)^{-1}\leq \exp\left(\frac{-\mu x_{N/2}}{\sqrt{\varepsilon}+\mu h}\right)\\\
		&=\exp\left(\frac{-\mu\sigma}{\sqrt{\varepsilon}+2\mu\sigma N^{-1}}\right)=\exp\left(\frac{-\mu\sigma_0 L\sqrt{\varepsilon}}{\sqrt{\varepsilon}+2\mu\sigma_0L\sqrt{\varepsilon} N^{-1}}\right)\\
		&=\exp\left(\frac{-\mu\sigma_0L}{1+2\mu\sigma_0L N^{-1}}\right).
		\end{align*}
		Using $ e^{-L} \leq L/N $, we get
		\begin{align*}
		\phi_i^n(\mu)&=L^{\left(\dfrac{\mu\sigma_0}{1+2\mu\sigma_0L N^{-1}}\right)} N^{\left(\dfrac{-\mu\sigma_0}{1+2\mu\sigma_0L N^{-1}}\right)}\\
		&\leq C L^{ \displaystyle \mu \sigma_0} N^{\displaystyle -\mu\sigma_0}.
		\end{align*}
	\end{enumerate}
\end{proof}
To obtain $\varepsilon$-uniform error estimate we decompose the numerical solution $ U_i^{n}=U(x_i,t_n) $ of the discrete problem (\ref{3.1}) into a regular part $ Y_i^{n} $ and a singular part $ Z_i^{n}$ analogously to the decomposition of the continuous solution $u(x,t)$ as:
\begin{align}
\label{3.13}
U(x_i,t_n)=Y(x_i,t_n)+Z(x_i,t_n), \quad \forall \thinspace (x_i,t_n)\in Q_{\sigma}^{N,M},
\end{align}
where $ Y_i^{n} $ satisfies the following non-homogeneous problem
\begin{align*}
L_\varepsilon^{N,M}Y(x_i,t_n)&=e(x_i,t_n)Y(x_i,t_{n-m_\tau})+f(x_i,t_n) ,\quad \forall \thinspace (x_i,t_n)\in Q_{\sigma}^{N,M},\\
Y(x_i,t_n)&=y(x_i,t_n), \quad \forall \thinspace (x_i,t_n)\in \Gamma_{\sigma}^{N,M}  
\end{align*}
and $ Z_i^{n} $ satisfies the following homogeneous problem 
\begin{align*}
L_\varepsilon^{N,M}Z(x_i,t_n)&=e(x_i,t_n)Z(x_i,t_{n-m_\tau}),\quad \forall \thinspace (x_i,t_n)\in Q_{\sigma}^{N,M},\\
Z(x_i,t_n)&=z(x_i,t_n), \quad \forall \thinspace (x_i,t_n)\in\Gamma_{\sigma}^{N,M}. 
\end{align*}
As a result the pointwise error at the node $ (x_i,t_n) $ in the discrete solution can be decomposed as
\begin{align*}
(U-u)(x_i,t_n)=(Y-y)(x_i,t_n)+(Z-z)(x_i,t_n),\quad \forall\thinspace(x_i,t_n)\in Q_{\sigma}^{N,M}.
\end{align*}

 \begin{lemma}[Error in the Regular Component ]
	\label{lm 3.6}
	Under the assumption (\ref{3.7}) of Lemma {\ref{3.1}}, the regular component at each mesh points  $ (x_i,t_n)\in \overline{Q}^{N,M}_{\sigma} $, satisfies the following error estimate: 
	\begin{align*}
	|(Y-y)(x_i,t_n)|\leq
	C(\Delta t+ N^{-2}), \quad  0 \leq i \leq N,\thinspace n\Delta t \leq T.
	\end{align*}
\end{lemma}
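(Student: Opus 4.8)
The plan is to bound the nodal error $E_i^n := (Y-y)(x_i,t_n)$ by combining a local truncation (consistency) analysis with the discrete comparison estimate of Lemma~\ref{lm3.3}, organised according to the method of steps. First I would derive the error equation: since $Y$ solves the discrete problem exactly and $y$ solves the continuous one, subtracting the two and using $L_\varepsilon y = e\,y(\cdot,t-\tau)+f$ yields, at every interior node,
\begin{align*}
L_\varepsilon^{N,M}E_i^n = e_i^n\,E_i^{\,n-m_\tau} - R_i^n, \qquad E_i^n = 0 \ \text{on}\ \Gamma_\sigma^{N,M},
\end{align*}
where $R_i^n$ is the truncation error of the hybrid operator applied to the smooth regular component $y$ (central form for $i\in I$, midpoint form for $i\notin I$). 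The boundary and initial error vanish because $Y=y$ on $\Gamma_\sigma^{N,M}$ and both equal $s$ on $\Gamma_b$.

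Next I would estimate $|R_i^n|$ region by region using Taylor expansion and the derivative bounds of Theorem~\ref{thm1}. The backward Euler term contributes $O(\Delta t\,\|\partial_t^2 y\|)=O(\Delta t)$ since $\|\partial_t^2 y\|\le C$. For the central region $i\in I$ the operator is symmetric, so the convection and diffusion truncations are $O(h_i^2\,\|a\,\partial_x^3 y\|)$ and $O(\varepsilon h_i^2\,\|\partial_x^4 y\|)$; on the fine mesh $h_i\le C\sqrt{\varepsilon}\,L/N$, and the crucial point is that in the fine-mesh regime $\sqrt{\varepsilon}\,L=\sigma/\sigma_0=O(1)$, so the quantities $\varepsilon L^2$, $\varepsilon^{3/2}L^3$ that arise are all $O(1)$ and the logarithmic factors get absorbed, leaving $|R_i^n|=O(N^{-2})$ (using $\|\partial_x^4 y\|\le C(1+\varepsilon^{-1/2})$ and, from the degeneracy, $a\le C\sqrt\varepsilon L$ on $[0,\sigma]$). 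For the midpoint region $i\notin I$, expanding about $x_{i+1/2}$ makes the convection, reaction and time terms second order, $O(H^2+\Delta t)$; the only delicate term is the diffusion mismatch $\varepsilon(\delta_x^2 y_i - \partial_x^2 y(x_{i+1/2}))$, whose leading part is $-\tfrac{H}{2}\varepsilon\,\partial_x^3 y(x_i)$. Here I would invoke the defining inequality of $I$, namely $a_i^n h_i\ge 2\varepsilon$ for $i\notin I$, together with boundedness of $a$, to conclude $\varepsilon\le C H$, hence $\varepsilon H\le C H^2 = O(N^{-2})$. Thus $|R_i^n|\le C(\Delta t+N^{-2})$ uniformly in both regions, the single transition node $i=N/2$ being treated separately with its non-uniform contribution again controlled by the $\varepsilon$ prefactor.

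Finally I would close the argument by induction over the delay strips. On $0<t_n\le\tau$ the delay index satisfies $n-m_\tau\le 0$, so $E_i^{\,n-m_\tau}$ lies on $\Gamma_b$ and vanishes; Lemma~\ref{lm3.3} then gives $\|E\|\le \tfrac{T}{\beta}\max|R|\le C(\Delta t+N^{-2})$. On the $k$-th strip the delay term equals the (already bounded) error on the previous strip, so $\max|L_\varepsilon^{N,M}E|\le(\|e\|_{\overline Q}+1)\,C(\Delta t+N^{-2})$ and Lemma~\ref{lm3.3} propagates the bound; since there are only finitely many strips ($k=T/\tau$), the accumulated constant remains independent of $\varepsilon$, $N$ and $\Delta t$. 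The main obstacle is the truncation analysis rather than the stability step: specifically, showing that the logarithmic factors produced on the fine mesh and the node-versus-midpoint diffusion mismatch on the coarse mesh are both genuinely $O(N^{-2})$, which hinges on the two structural facts that $\sqrt\varepsilon L=O(1)$ in the layer regime and that $\varepsilon\le CH$ precisely where the midpoint scheme is applied.
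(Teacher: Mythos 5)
Your proposal is correct and follows essentially the same route as the paper's proof: the identical error equation, truncation estimates driven by the derivative bounds of Theorem~\ref{thm1}, stability via Lemma~\ref{lm3.3}, and induction over the delay strips $[0,\tau],[\tau,2\tau],\ldots$ with the previously established strip error feeding the delay term. The only organizational difference is in the coarse (midpoint) region, where you invoke the defining inequality of $I$ pointwise ($i\notin I$ forces $a_i^n h_i\ge 2\varepsilon$, hence $\varepsilon\le CH$), while the paper reaches the same conclusion through a global case split between $\varepsilon>\|a\|_{\overline{Q}}/N$ (in which case every interior node lies in $I$) and $\varepsilon\le\|a\|_{\overline{Q}}/N$ — the underlying structural fact is the same, so this is a variation in bookkeeping rather than a genuinely different argument.
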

\begin{proof}  On the interval $[0, \tau]$, the right-hand side of (\ref{1.1}) becomes $f (x, t)+e(x, t)s(x,t-\tau)$ which is known and is independent of $\varepsilon$. We will consider two cases depending upon the relation between $\varepsilon$ and $N$:
\begin{enumerate}
\item[\textbf{Case (i)}] When $\varepsilon > \|a\|_{\overline{Q}}/N$. In this case we have $a_i h_i < 2 \varepsilon$ for all $i \in \{1,\ldots,N-1\} $ which implies the set $\{1, \ldots, N-1\} \subseteq I$. We get
 \begin{align*}
 |L_{\varepsilon}^{N,M}(Y-y)(x_i,t_n)| &\leq |L_{\varepsilon,cen}^{N,M}(Y-y)(x_i,t_n)|,\quad \forall  ~1\leq i \leq N-1,\thinspace n\Delta t \leq \tau  \\
& \leq C[\Delta t+h_i(h_{i+1}+h_i)(\varepsilon|y_{xxxx}|+|y_{xxx}|)].
\end{align*} 
Using $ h_{i+1}+h_i\leq 2N^{-1} $ and the bounds on the derivatives of $ y $ given in Theorem \ref{thm1}, we get
  \begin{align*}
 |L_{\varepsilon}^{N,M}(Y-y)(x_i,t_n)|\leq C(\Delta t+ N^{-2}),\quad 1 \leq i \leq N-1,\thinspace n\Delta t \leq \tau.
 \end{align*}  
 Using Lemma \ref{lm3.3} we can obtain the desired result.
\item[\textbf{Case (ii)}] When $\varepsilon \leq \|a\|_{\overline{Q}}/N$. For the smooth component the truncation error is defined as
	\begin{align*}
	|L_{\varepsilon}^{N,M}(Y-y)(x_i,t_n)|\leq
	\begin{cases}
	C[\Delta t+h_i(h_{i+1}+h_i)(\varepsilon|y_{xxxx}|+|y_{xxx}|)],\quad \forall \thinspace i\in I,\\
	C[\Delta t+\varepsilon(h_{i+1}+h_i)|y_{xxx}|+h_{i+1}^2(|y_{xxx}|+|y_{xx}|+|y_x|)],\quad \forall \thinspace i\not\in I.\end{cases}
	\end{align*}
	Using $ h_{i+1}+h_i\leq 2N^{-1} $ and the bounds on the derivatives of $ y(x,t) $ given in Theorem \ref{thm1}, we get
	\begin{align*}
	|L_{\varepsilon}^{N,M}(Y-y)(x_i,t_n)|\leq \begin{cases}
	C(\Delta t+ N^{-2}),\quad \forall~ i\in I,\\
	C(\Delta t+N^{-1}(\varepsilon+N^{-1})), \quad \forall~ i\not\in I. \end{cases}
	\end{align*}
	Using $\sqrt{\varepsilon} \leq C/N$, we get
	\begin{align*}
	|L_{\varepsilon}^{N,M}(Y-y)(x_i,t_n)|\leq 
	C(\Delta t+ N^{-2}),
	\end{align*}
	where $(x_i,t_n) \in Q_{\sigma,~ \tau}^{N,M}= \Omega_{\sigma}^{N} \times \Omega_{1}^{M}$ ($\Omega_{1}^{M}$ is the uniform mesh with $M=m_\tau$ mesh elements in the interval $[0,\tau]$). Applying Lemma \ref{lm3.3} we can obtain the desired result for the considered case. 
	\end{enumerate}
	Combining both the cases, we get
	\begin{align}
	\label{3.14}
	|(Y-y)(x_i,t_n)|\leq
	C(\Delta t+ N^{-2}),~ \forall (x_i,t_n) \in Q_{\sigma,\tau}^{N,M}.
	\end{align}
	The regular part $Y_i^n=Y(x_i,t_n)$ of the numerical solution on $Q_{\sigma,~ \tau}^{N,M}$ is denoted by $Y_{\tau,i}^n=Y_{\tau}(x_i,t_n)$.\\
	Next, we consider the second interval $[\tau, 2 \tau]$. On the second interval $[\tau, 2 \tau]$, the delay term $u(x,t-\tau)$ is the numerical solution obtained in the first interval $[0, \tau]$. To do the error analysis in the interval $[\tau,2 \tau]$ consider the problem:-
	\begin{align*}
	L_{\varepsilon}y(x,t)&= \left( \varepsilon \frac{\partial^2{y}}{\partial{x^2}}+a \frac{\partial{y}}{\partial{x}}-b\frac{\partial{y}}{\partial{t}}-c y\right) (x,t)= e(x,t) y(x, t- \tau)+f(x,t),\\ ~ for ~ (x,t) \in Q_2 = (0,1) \times (\tau,2 \tau],\\
	y(x,t) &= u(x,t) ~ for ~ (x,t) \in \Omega \times [0,\tau], \\
	y(0,t) &= q_{0}(t)\hspace{0.3cm} for \hspace{0.3cm}  \tau \leq t \leq  2 \tau,\nonumber \\
	y(1,t) &= q_{1}(t)\hspace{0.3cm} for \hspace{0.3cm} \tau \leq t\leq 2 \tau.	
	\end{align*}
	The regular component $Y(x_i,t_n)$ of the numerical solution at mesh point $(x_i,t_n) \in Q_{\sigma,~ 2\tau}^{N,M}= \Omega_{\sigma}^{N} \times \Omega_{2}^{M}$ ($\Omega_{2}^{M}$ is the uniform mesh with $M=m_\tau$ mesh elements in the interval $[\tau,2\tau]$) is determined by
	\begin{align*}
L_{\varepsilon}^{N,M}Y(x_i,t_n) = e(x_i,t_n) Y(x_i,t_{n-m_\tau}) + f(x_i,t_n), \quad (x_i,t_n) \in Q_{\sigma,~ 2\tau}^{N},\\
Y(0,t_n)= y(0,t_n), \quad Y(1,t_n)= y(1,t_n), \quad for ~ t_n \in \Omega_{2}^{M},\\
Y(x_i,t_n) = Y_{\tau}(x_i,t_n), \quad (x_i,t_n) \in Q_{\sigma,~ \tau}^{N,M}.	
	\end{align*} 
	We observe that 
	\begin{align*}
	L_{\varepsilon}^{N,M}(Y-y)(x_i,t_n) &= e(x_i,t_n) Y(x_i,t_{n-m_\tau}) + f(x_i,t_n) - L_{\varepsilon}^{N,M} y(x_i,t_n)\\
	&= e_{i}^n (Y_{\tau}-y)(x_i,t_n) + L_{\varepsilon}y(x_i,t_n) - L_{\varepsilon}^{N,M}y(x_i,t_n)\\
	&= e_i^n (Y_{\tau}-y)(x_i,t_n) + (L_{\varepsilon} - L_{\varepsilon}^{N,M})y(x_i,t_n).
	\end{align*}
	On using inequality (\ref{3.14}) and again considering two cases when $(i)\varepsilon > \|a\|_{\overline{Q}}/N$ and when $(ii)\varepsilon \leq \|a\|_{\overline{Q}}/N$, we get
	\begin{align*}
L_{\varepsilon}^{N,M}(Y-y)(x_i,t_n) = C[\Delta t + N^{-2}], \quad (x_i,t_n) \in Q_{\sigma,2 \tau}^{N,M}.
	\end{align*}
	Application of Lemma \ref{lm3.3} gives us the desired result in the interval $[\tau,2\tau]$. Similarly, we can prove the result for $t \in [2\tau,3 \tau] $ and so on.  
\end{proof}
\begin{lemma}[Error in the Singular Component]
	\label{lm3.7}
	Under the assumption (\ref{3.7}) and $ 0<\mu<\frac{m}{2} $ the following error estimate is satisfied by the singular component $Z(x_i,t_n)$ at each mesh points $(x_i,t_n)\in \overline{Q}^{N,M}_{\sigma}$,
	\begin{align*}
	|(Z-z)(x_i,t_n)|\leq 
	C(\Delta t+N^{-2}L^2 ), \quad  0 \leq i \leq N,\thinspace n\Delta t \leq T.
	\end{align*}
\end{lemma}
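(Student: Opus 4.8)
The plan is to argue by the method of steps and close an induction over the time slabs $[(k-1)\tau,k\tau]$. On the first slab $[0,\tau]$ the delay arguments of both $z$ and $Z$ fall on $\Gamma_b$, where $z=Z=0$, so the error equation collapses to
\begin{align*}
L_\varepsilon^{N,M}(Z-z)(x_i,t_n)=(L_\varepsilon-L_\varepsilon^{N,M})z(x_i,t_n),
\end{align*}
i.e.\ to the pure truncation error of the hybrid scheme acting on the exact layer function. On a later slab the same manipulation, together with the continuous relation $L_\varepsilon z=e\,z(\cdot,t-\tau)$, gives
\begin{align*}
L_\varepsilon^{N,M}(Z-z)(x_i,t_n)=e_i^n(Z-z)(x_i,t_{n-m_\tau})+(L_\varepsilon-L_\varepsilon^{N,M})z(x_i,t_n),
\end{align*}
where the first term is bounded uniformly in $i$ by the inductive hypothesis $C(\Delta t+N^{-2}L^2)$ and the second is again a truncation term. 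Since $Z=z$ on $\Gamma_\sigma^{N,M}$, the error vanishes on the discrete boundary, so by linearity it suffices to bound these two source contributions and feed them into the discrete minimum principle (Lemma \ref{lm3.2}).

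First I would estimate the truncation $(L_\varepsilon-L_\varepsilon^{N,M})z$. Because the step size is constant within each of the two mesh blocks, the consistency errors reduce to the uniform-mesh expressions already used for the regular component: on the fine block, where $\{1,\dots,N/2\}\subseteq I$ and the central operator is active, $|(L_\varepsilon-L_\varepsilon^{N,M})z|\leq C[\Delta t+h^2(\varepsilon|z_{xxxx}|+|z_{xxx}|)]$, while on the coarse block (midpoint upwind) $|(L_\varepsilon-L_\varepsilon^{N,M})z|\leq C[\Delta t+\varepsilon H|z_{xxx}|+H^2(|z_{xxx}|+|z_{xx}|+|z_x|)]$, with $h=2\sigma_0\sqrt{\varepsilon}L/N$ and $H\approx 2/N$. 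Substituting the layer bounds $|\partial_x^i z|\leq C\varepsilon^{-i/2}\exp(-mx/\sqrt{\varepsilon})$ of Theorem \ref{thm1}, the fine-block spatial error becomes $\leq C\varepsilon^{-1/2}N^{-2}L^2\exp(-mx/\sqrt{\varepsilon})$: the factor $h^2\sim\varepsilon L^2N^{-2}$ cancels two of the three powers of $\varepsilon^{-1/2}$ carried by $z_{xxx}$, but one power of $\varepsilon^{-1/2}$ survives and cannot be bounded uniformly. Removing this residual $\varepsilon^{-1/2}$ is the heart of the argument.

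To absorb it I would compare $\pm(Z-z)$ with a barrier built from $\phi_i^n(\mu)$. On the fine block I use Lemma \ref{lm3.5}(i), $\exp(-mx_i/\sqrt{\varepsilon})\leq\phi_i^n(\mu)$, so that the truncation is $\leq C\varepsilon^{-1/2}N^{-2}L^2\phi_i^n(\mu)$, together with Lemma \ref{lm3.4}, which supplies the matching decay $L_\varepsilon^{N,M}\phi_i^n(\mu)\leq -C\varepsilon^{-1/2}\phi_i^n(\mu)$ for $i\in I$. Taking $\Psi_i^n=B\,N^{-2}L^2\phi_i^n(\mu)\pm(Z-z)_i^n$ with $B$ large makes $L_\varepsilon^{N,M}\Psi_i^n\leq 0$ (the two $\varepsilon^{-1/2}$ factors cancel), so the minimum principle yields $|Z-z|\leq C N^{-2}L^2$ on the fine block; when $\sigma=1/2$ the same device with coefficient $N^{-2}\varepsilon^{-1}$ works and the bound $\varepsilon^{-1/2}\leq 2\sigma_0 L$ converts $\varepsilon^{-1}$ into $L^2$. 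On the coarse block (relevant only when $\sigma=\sigma_0\sqrt{\varepsilon}L<1/2$) I would not estimate the truncation directly, since there $L_\varepsilon^{N,M}\phi_i^n(\mu)\leq -C(\sqrt{\varepsilon}+\mu h_{i+1})^{-1}\phi_i^n(\mu)$ no longer matches the $\varepsilon^{-1/2}$ growth; instead I bound $z$ and $Z$ separately. For $x_i>\sigma$ one has $|z(x_i,t_n)|\leq C\exp(-m\sigma_0 L)\leq CN^{-2}$ once $\sigma_0\geq 2/m$, while comparison of $Z$ with $\phi_i^n(\mu)$ and Lemma \ref{lm3.5}(ii), $\phi_i^n(\mu)\leq CL^{\mu\sigma_0}N^{-\mu\sigma_0}$ for $i\geq N/2$, gives $|Z(x_i,t_n)|\leq CN^{-2}L^2$ once $\mu\sigma_0\geq 2$; adding these yields $|(Z-z)(x_i,t_n)|\leq CN^{-2}L^2$ there (at the single transition node the mesh is non-uniform, but $z$ is already exponentially small, so its truncation is negligible).

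The main obstacle is exactly this surviving $\varepsilon^{-1/2}$ in the fine-block truncation; the scheme and mesh are engineered so that the barrier $\phi_i^n(\mu)$ decays at the matching rate $\varepsilon^{-1/2}$ (Lemma \ref{lm3.4}) while the Shishkin width $h\sim\sqrt{\varepsilon}L/N$ supplies the cancelling powers of $\varepsilon$. Because the convection coefficient degenerates at $x=0$, the layer is of reaction-diffusion type with rate $m=\sqrt{\gamma}$, so all the tools used here---the derivative bounds of Theorem \ref{thm1} and the decay $L_\varepsilon^{N,M}\phi\leq -C\varepsilon^{-1/2}\phi$---are already reaction-diffusion type and available from the earlier lemmas; the remaining effort is careful bookkeeping and checking that $L_\varepsilon^{N,M}\Psi\leq 0$ holds with one constant across both blocks and for the time term (the backward-Euler error contributes $\leq C\Delta t\exp(-mx/\sqrt{\varepsilon})\leq C\Delta t\,\phi_i^n(\mu)$, again dominated). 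Finally, by linearity the error splits into a part driven by the truncation, handled by the $\phi$-barrier above, and a part driven by the uniformly small delay term $e_i^n(Z-z)(\cdot,t_{n-m_\tau})$ with homogeneous boundary data, which is propagated by the stability estimate of Lemma \ref{lm3.3} and contributes a further $O(\Delta t+N^{-2}L^2)$; summing gives $|(Z-z)(x_i,t_n)|\leq C(\Delta t+N^{-2}L^2)$ on $[0,\tau]$, and the induction carries the same estimate to every subsequent slab.
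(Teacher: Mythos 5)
Your proposal follows essentially the same route as the paper's proof: method of steps over the slabs $[(k-1)\tau,k\tau]$, bounding $Z$ and $z$ separately in the coarse region via the barrier $\phi_i^n(\mu)$ and Lemma \ref{lm3.5} with $\sigma_0=2/\mu$, cancelling the surviving $\varepsilon^{-1/2}$ in the fine-block truncation against the decay $L_\varepsilon^{N,M}\phi_i^n(\mu)\leq -C\varepsilon^{-1/2}\phi_i^n(\mu)$ of Lemma \ref{lm3.4} through the discrete minimum principle, and propagating the delay term $e_i^n(Z-z)(x_i,t_{n-m_\tau})$ inductively. The only cosmetic differences (absorbing the $\Delta t$ contribution into the $\phi$-coefficient instead of the paper's $(\Delta t+N^{-2}L^2)t_n$ term, and your explicit treatment of the $\sigma=1/2$ case) do not change the substance of the argument.
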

\begin{proof}
	 We proceed by method of steps. Firstly, we compute the error in the interval $[0,\tau]$ and then consider the interval $[\tau,2 \tau]$.\\
	 
	Case (i): In this case, for $t \in [0,\tau]$ the right hand side of Eqn. (\ref{1.1}) is known and independent of $\varepsilon$. We first consider the outer region $ (\sigma,1]\times (0,\tau] $. We know that $Z$ and $z$ are small in the outer region irrespective of the fact that whether $i \in I$ or $i \notin I$.\\
	Consider the barrier functions
	\begin{align*}
	\psi^\pm(x_i,t_n)=C\phi_i^n(\mu)\pm Z(x_i,t_n),\quad \forall \thinspace (x_i,t_n)\in Q^{N,M}_{\sigma,\tau},
	\end{align*}
	where $ C=|z(x_0,t_n)| $.
	We observe that 
	\begin{align*}
	\psi^\pm(x_0,t_n)&=C\phi_0^n(\mu)\pm Z(x_0,t_n)=C\pm z(x_0,t_n)\geq 0,\\
	\psi^\pm(x_i,t_n)&=C\phi_i^n(\mu)\geq 0 ,\quad \forall ~ (x_i,t_n) \in \Gamma_{b,\sigma}^{N} \quad
	\mbox{and}\\
	\psi^\pm(x_N,t_n)&=C\phi_N^n(\mu)\geq 0;\\
	L_\varepsilon^{N,M}\psi^\pm(x_i,t_n)&=C L_\varepsilon^{N,M}\phi_i^n(\mu)\pm L_\varepsilon^{N,M} Z(x_i,t_n)= L_\varepsilon^{N,M}\phi_i^n(\mu)< 0.
	\end{align*}
	Using the discrete minimum principle, we get
	\begin{align*}
	|Z(x_i,t_n)|&\leq C\phi_i^n(\mu)=C\prod\limits_{j=1}^i \left(1+\frac{\mu h_j}{\sqrt{\varepsilon}}\right)^{-1} ,\quad \forall \thinspace (x_i,t_n)\in Q^{N,M}_{\sigma,\tau}.
	\end{align*}
	Using triangle inequality and Lemma \ref{lm3.5}, we get
	\begin{align*}
	|(Z-z)(x_i,t_n)|&\leq |Z(x_i,t_n)|+|z(x_i,t_n)|\\
	&\leq C\prod\limits_{j=1}^i \left(1+\frac{\mu h_j}{\sqrt{\varepsilon}}\right)^{-1}+C\exp\left(\frac{-m x_i}{\sqrt{\varepsilon}}\right)\\
	&\leq C\prod\limits_{j=1}^i \left(1+\frac{\mu h_j}{\sqrt{\varepsilon}}\right)^{-1}=C \phi_i^n(\mu).
	\end{align*}
	Using the bounds of $ \phi_i^n(\mu) $ given in Lemma \ref{lm3.5}, we get
	\begin{align*}
	|(Z-z)(x_i,t_n)|\leq C L^{\mu \sigma_0} N^{-\mu \sigma_0},\quad \forall ~ \frac{N}{2}\leq i
	\leq N,~ t_n\in \Omega_{1}^{M}.
	\end{align*}
	On taking $ \sigma_0=\dfrac{2}{\mu} $, we get
	\begin{align}
	\label{3.19}
	|(Z-z)(x_i,t_n)|\leq C L^{2} N^{-2},\quad \forall ~ \frac{N}{2}\leq i \leq N, ~ t_n\in \Omega_{1}^{M}.
	\end{align}
	
	Next, we consider the inner region $[0,\sigma] \times (0,\tau]$. For $N \geq N_0$, satisfying (\ref{3.9}) we have $a_i^{n}h_i < 2 {\varepsilon}$, for all $i=1,\ldots,N/2$ which implies $\lbrace 1,\ldots,N/2\rbrace \subseteq I$. Therefore, $ L_{\varepsilon,cen}^{N,M} $ is applied in the region $[0,\sigma] \times [0,\tau]$. We have,
	\begin{align} \label{3.19a}
	|L_{\varepsilon,cen}^{N,M}(Z-z)(x_i,t_n)|&\leq C \left[\Delta t+h_i\int\limits_{x_{i-1}}^{x_{i+1}} \left(\varepsilon|z_{xxxx}|+|z_{xxx}| \right) dx \right] \nonumber \\
	&\leq C \left[\Delta t+\frac{h_i}{\varepsilon^{3/2}}\int\limits_{x_{i-1}}^{x_{i+1}}\exp \left(\frac{-m x}{\sqrt{\varepsilon}} \right)dx \right] \nonumber \\
	&=C \left[\Delta t+\frac{h_i}{m {\varepsilon}}\left\lbrace\exp \left(\frac{-m x_{i-1}}{\sqrt{\varepsilon}} \right)-\exp \left(\frac{-m x_{i+1}}{\sqrt{\varepsilon}} \right)\right\rbrace \right] \nonumber \\
	&=C \left[\Delta t+\frac{h_i}{m{\varepsilon}}\exp \left(\frac{-m x_{i}}{\sqrt{\varepsilon}} \right)\left\lbrace\exp \left(\frac{m h}{\sqrt{\varepsilon}} \right)-\exp \left(\frac{- m h}{\sqrt{\varepsilon}} \right)\right\rbrace \right] \nonumber \\ 
	&=C \left[\Delta t+\frac{h_i}{m{\varepsilon}} \exp \left( \frac{-m x_{i}}{\sqrt{\varepsilon}} \right) \text{sinh} \left( \frac{m h}{\sqrt{\varepsilon}} \right) \right].  
	\end{align}
	Since for inner region $i \in I$ so, using (\ref{3.9}) we get $ m h <  \sqrt{\varepsilon} $ and sin$h\xi \leq C\xi$, for $ 0 \leq \xi \leq 2 $. This implies sinh$ \left( \frac{m h}{\sqrt{\varepsilon}} \right) \leq \frac{C m h}{\sqrt{\varepsilon}} $. Therefore, (\ref{3.19a}) becomes
	\begin{align}
	\label{3.20}
	| L_{\varepsilon,cen}^{N,M} (Z-z)(x_i,t_n) |  \leq C \left[ \Delta t + \frac{h_i^{2}}{\varepsilon^{3/2}} \exp \left( \frac{-m x_i}{\sqrt{\varepsilon}} \right) \right].
	\end{align} 
	Also, in the inner region, we have $h_i=h$, so
	\begin{align}
	\label{3.23}
	|L_{\varepsilon,cen}^{N,M}(Z-z)(x_i,t_n)|&\leq C \left[\Delta t+\frac{h^2}{\varepsilon^{3/2}} \exp \left( \frac{-m x_i}{\sqrt{\varepsilon}} \right) \right]
	\leq C \left[ \Delta t +\frac{ N^{-2}L^{2}}{\sqrt{\varepsilon}} \phi_i^{n}(\mu) \right] .
	\end{align}  
	We have $| (Z-z)(x_0,t_n) | = 0$, $\forall ~ t_n \in \Omega_{1}^{M}$ and $ |(Z-z)(x_i,t_n)| = 0 $, $\forall ~ (x_i,t_n) \in \Gamma_{b,\sigma}^{N}$.\newline
	Also, from (\ref{3.19}) we have 
	\begin{align*}
	|(Z-z)(x_{N/2},t_n)| \leq CN^{-2}L^{2}.
	\end{align*}
	Considering the barrier functions, 
	\begin{align*}
	\Psi^{\pm}(x_i,t_n) &= C (  N^{-2}L^2 \phi_i^{n}(\mu) + ( \Delta t + N^{-2}L^{2})t_n) \pm (Z-z)(x_i,t_n),
	\end{align*}
	we observe that $\Psi^{\pm}(x_{N/2},t_n)>0 $, $\Psi^{\pm}(x_N,t_n)>0$, $\forall ~ t_n \in \Omega_{1}^{M}$ and $\Psi^{\pm}(x_i,t_n)>0$, $\forall  ~ (x_i,t_n) \in \Gamma_{b,\sigma}^{N}$. Using Lemma \ref{lm3.4} we have,
	\begin{align*}
	L_{\varepsilon,cen}^{N,M}\Psi^{\pm}(x_i,t_n)&= C( N^{-2}L^2 L_{\varepsilon}^{N,M} \phi_i^{n}(\mu)) - b(x_i,t_n)(\Delta t + N^{-2}L^{2}) \pm L_{\varepsilon}(Z-z)(x_i,t_n) \leq 0  .
	\end{align*} 
	From discrete minimum principle we get, 
	\begin{align}
	\label{3.23a}
	|(Z-z)(x_i,t_n)| &\leq C(\Delta t + N^{-2}L^{2} \phi_i^{n}(\mu)), \qquad \forall~ i=0,\ldots,N/2, \quad t_n \in \Omega_{1}^{M},\nonumber \\
	&\leq C(\Delta t + N^{-2}L^{2}).
	\end{align}
	 The singular part $Z(x_i,t_n)$ of the numerical solution on $Q_{\sigma,~ \tau}^{N,M}$ is denoted by $Z_{\tau}(x_i,t_n)$.\\
	 
	Case (ii): On the second interval $[\tau, 2 \tau]$, the delay term $u(x,t-\tau)$ is the numerical solution obtained in the first interval $[0, \tau]$. We will do the error analysis over the interval $[\tau,2 \tau]$ in the following way. We will consider the singularly perturbed delay parabolic partial differential equation (\ref{2.8}) on the second interval $[\tau,2 \tau]$. The singular component $Z(x_i,t_n)$ of the numerical solution at mesh points $(x_i,t_n) \in Q_{\sigma,~ 2\tau}^{N,M}= \Omega_{\sigma}^{N} \times \Omega_{2}^{M}$ is determined by
	\begin{align*}
	L_{\varepsilon}^{N,M} Z = e(x_i,t_n) Z(x_i,t_{n-m_\tau}), \quad (x_i,t_n) \in Q_{\sigma,~ 2\tau}^{N},\\
	Z(x_i,t_n)=z(x_i,t_n), \quad (x_i,t_n) \in \Gamma_{\sigma}^{N,M}.	
	\end{align*} 
	We observe that 
	\begin{align*}
	L_{\varepsilon}^{N,M}(Z-z)(x_i,t_n) &= e(x_i,t_n) Z(x_i,t_{n-m_\tau}) - L_{\varepsilon}^{N,M} z(x_,t_n)\\
	&= e_{i}^n (Z_{\tau}-z)(x_i,t_{n-m_\tau}) + L_{\varepsilon}z(x_i,t_n) - L_{\varepsilon}^{N,M}z(x_i,t_n)\\
	&= e_i^n (Z_{\tau}-z)(x_i,t_{n-m_\tau}) + (L_{\varepsilon} - L_{\varepsilon}^{N,M})z(x_i,t_n)
	\end{align*}
    First term on the right hand side can be approximated using (\ref{3.23a}) and the second term can be approximated using the same approach as discussed in the first interval. The proof is completed by introducing the barrier functions and applying the discrete minimum principle as in the previous case for $t \in [0,\tau]$.
    \begin{align*}
    |(Z-z)(x_i,t_n)|	& \leq C[\Delta t + N^{-2}L^{2}],\quad (x_i,t_n) \in \overline{Q}_{\sigma,2\tau}^{N,M}.
	\end{align*}
	 Similarly, the case, for $t \geq 2\tau $ also follows on the same lines.  
\end{proof}
\begin{theorem}
\label{thm3.1}
Let $u(x_i,t_n)$ be the exact solution of the problem (\ref{1.1})-(\ref{1.3}) and $U(x_i,t_n)$ be the discrete solution of the system of Eqns.(\ref{3.1}) at each mesh point $(x_i,t_n) \in \overline{Q}_{\sigma}^{N,M}$. Then under the assumption of Lemma \ref{lm3.4} and $ 0 < \mu < m/2$ for $ N \geq N_0 $ we have
\begin{align*}
\|(U-u)\|_{\overline{Q}_{\sigma}^{N,M}} \leq 
C(\Delta t + N^{-2}L^{2} + N^{-2}).
\end{align*} 
\end{theorem}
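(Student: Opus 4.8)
The plan is to reduce the global error bound to the two component estimates already established, exploiting the fact that both the continuous solution and its discrete counterpart admit parallel decompositions into regular and singular parts. Recall from (\ref{3.13}) that the numerical solution splits as $U(x_i,t_n) = Y(x_i,t_n) + Z(x_i,t_n)$, mirroring the decomposition $u = y + z$ of the exact solution into its regular component $y$ (satisfying (\ref{2.7})) and singular component $z$ (satisfying (\ref{2.8})). Subtracting these term by term yields the pointwise error decomposition
\begin{align*}
(U-u)(x_i,t_n) = (Y-y)(x_i,t_n) + (Z-z)(x_i,t_n), \quad \forall~(x_i,t_n) \in \overline{Q}_{\sigma}^{N,M}.
\end{align*}

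First I would invoke the triangle inequality on this identity so that the total error at each mesh point is controlled by the sum of the regular and singular component errors. The two ingredients are then already in hand: Lemma \ref{lm 3.6} gives $|(Y-y)(x_i,t_n)| \leq C(\Delta t + N^{-2})$ and Lemma \ref{lm3.7} gives $|(Z-z)(x_i,t_n)| \leq C(\Delta t + N^{-2}L^2)$, both valid uniformly for $0 \leq i \leq N$ and $n\Delta t \leq T$ under the hypotheses $0 < \mu < m/2$ and $N \geq N_0$. Adding these and absorbing constants produces
\begin{align*}
|(U-u)(x_i,t_n)| \leq C(\Delta t + N^{-2}L^2 + N^{-2}),
\end{align*}
and taking the supremum over all mesh points delivers the claimed estimate in the discrete maximum norm.

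The argument is essentially a bookkeeping step, so I do not expect any genuine obstacle here; the analytical difficulty has already been discharged inside Lemmas \ref{lm 3.6} and \ref{lm3.7}, where the truncation error is split according to whether the central or midpoint-upwind operator acts, and the barrier-function bounds of Lemma \ref{lm3.5} are used to absorb the layer contribution. The only point requiring minor care is consistency of the generic constant $C$ and the uniformity of both component bounds across the inner and outer mesh regions, so that after summing they remain independent of $\varepsilon$ and the mesh data. Since both lemmas were established through the method of steps on each subinterval $[k\tau,(k+1)\tau]$, the combined bound inherits the same $\varepsilon$-uniform, interval-independent character, and the proof concludes directly.
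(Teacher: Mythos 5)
Your proposal is correct and follows exactly the route the paper intends: the paper itself records the error decomposition $(U-u)(x_i,t_n)=(Y-y)(x_i,t_n)+(Z-z)(x_i,t_n)$ immediately before Lemma \ref{lm 3.6}, and Theorem \ref{thm3.1} is then the direct combination of Lemma \ref{lm 3.6} and Lemma \ref{lm3.7} via the triangle inequality, with the paper leaving this bookkeeping step implicit. Your attention to the $\varepsilon$-uniformity of the generic constant across both component bounds is the right point of care, and nothing further is needed.
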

\section{Richardson extrapolation}
In this section, the Richardson extrapolation technique is used to obtain higher accuracy and order of convergence in time direction. We consider two meshes $ \overline{Q}_{\sigma}^{N,M} = \overline{\Omega}_{\sigma}^{N} \times \overline{\Omega}^{M}$ and $ \overline{Q}_{\sigma}^{N,2M} = \overline{\Omega}_{\sigma}^{N} \times \overline{\Omega}^{2M}$ where $\overline{\Omega}^{M}$ and $\overline{\Omega}^{2M}$ are uniform meshes with $M$ and $2M$ mesh points, respectively in the temporal direction. Both the considered meshes have same number of mesh points in the spatial direction. Let
\begin{align*}
\overline{Q}_{\sigma,0}^{N,M}= \overline{Q}_{\sigma}^{N,M} \cap \overline{Q}_{\sigma}^{N,2M}.
\end{align*}
Then, $\overline{Q}_{\sigma,0}^{N,M}= \overline{Q}_{\sigma}^{N,M}$, as $\overline{Q}_{\sigma}^{N,M} \subseteq \overline{Q}_{\sigma}^{N,2M}$. Let $U^{k}$ denote the numerical solution of the problem (\ref{3.1}) on the mesh $ \overline{Q}_{\sigma}^{N,kM} = \overline{\Omega}_{\sigma}^{N} \times \overline{\Omega}^{kM} $ where $k=1,2$. Then, we approximate $u(x,t)$ by $U_{ext}(x_i,t_n)$ where 
\begin{align}
	\label{4.1}
	U_{ext}(x_i,t_n)=(2U^{2}-U^{1})(x_i,t_n), \quad (x_i,t_n) \in \overline{Q}_{\sigma,0}^{N,M}.
\end{align}
The numerical approximation $U_{ext}$ has improved order of convergence in time. To verify this we use a technique similar to \cite{MR2201992}. We have
\begin{align}
	\label{4.2}
	U^{k}(x_i,t_n)=u(x_i,t_n)+2^{-(k-1)}\Delta t \xi^{k} (x_i,t_n) + R_{n}^{k}(x_i,t_n),\quad (x_i,t_n) \in \overline{Q}_{\sigma,0}^{N,M}
\end{align}
where $R_{n}^{k}$, $k=1,2$ is the remainder term and $ \xi^k $ is the solution of the following problem: 
\begin{align}
	\label{4.3}
	L_{\varepsilon} \xi^{k} &= \left(\frac{b}{2}\frac{\partial^2 u}{\partial t^2} \right) (x,t) , \quad (x,t) \in Q,\\ \nonumber
	\xi^{k}(x,t)&=0, \quad (x,t)\in  \Gamma .
\end{align}
We need to derive the estimates for the remainder term $R^{k}$ on $\overline{Q}_{\sigma}^{N,kM}$, $k=1,2$. Also $R^k(x_i,t_n)=0$, $\forall (x_i,t_n) \in \Gamma_{\sigma}^{N,kM}$, where $\Gamma_{\sigma}^{N,kM}$, $k=1,2$ is the boundary of $\overline{Q}^{N,kM}$. We have
\begin{align*}
|L^{N,k M}_{\varepsilon}R_n^{k}(x_i,t_n)|&= |L^{N,kM}_{\varepsilon}(U^{k}-u)(x_i,t_n)- 2^{-(k-1)} \Delta t L^{N, kM}_{\varepsilon}\xi^{k}(x_i,t_n)|\\
&\leq  C 
\left( N^{-2}L^{2}+N^{-2} + \Delta t^2 \right).
\end{align*}
From discrete minimum principle, we get
\begin{align*}
|R_n^{k}(x_i,t_n)| \leq
C \left( N^{-2}L^{2}+N^{-2} + \Delta t^2 \right).
\end{align*} 

\begin{theorem}
	\label{thm4.1}
	Let $u(x_i,t_n)$ be the exact solution of the problem.~(\ref{1.1})-(\ref{1.3}) and $U_{ext}(x_i,t_n)$ be the discrete solution obtained using the Richardson extrapolation at each mesh point $(x_i,t_n) \in \overline{Q}_{\sigma}^{N,M}$. Then, for $ N \geq N_0 $ where $N_0$ satisfies the assumption (\ref{3.7}) and $ 0 < \mu < m/2$, we have the following $ \varepsilon$-uniform error estimate
	\begin{align*}
	\|(U_{ext}-u)\|_{\overline{Q}_{\sigma}^{N,M}} \leq 
	C(\Delta t^2 + N^{-2}L^{2} + N^{-2}).
	\end{align*} 
\end{theorem}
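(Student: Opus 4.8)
The plan is to assemble the estimate from the two ingredients already prepared immediately before the statement: the asymptotic expansion \eqref{4.2} of each mesh solution $U^{k}$ and the remainder bound on $R^{k}_{n}$. The decisive structural fact is that the $O(\Delta t)$ correction in \eqref{4.2} is, apart from the explicit scaling $2^{-(k-1)}$, the \emph{same} function for $k=1$ and $k=2$. Indeed, $\xi^{k}$ is defined by problem \eqref{4.3}, whose data $\bigl(\tfrac{b}{2}\,\partial_t^2 u\bigr)$ and homogeneous boundary values carry no dependence on $k$; hence $\xi^{1}=\xi^{2}=:\xi$. The combination $U_{ext}=2U^{2}-U^{1}$ in \eqref{4.1} is engineered precisely so that these leading first-order-in-time errors annihilate each other.

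First I would substitute \eqref{4.2} into \eqref{4.1}. Writing the two instances explicitly,
\begin{align*}
U^{1}(x_i,t_n)&=u(x_i,t_n)+\Delta t\,\xi(x_i,t_n)+R_n^{1}(x_i,t_n),\\
U^{2}(x_i,t_n)&=u(x_i,t_n)+\tfrac{1}{2}\Delta t\,\xi(x_i,t_n)+R_n^{2}(x_i,t_n),
\end{align*}
and forming $2U^{2}-U^{1}$, the contribution $2\cdot\tfrac12\Delta t\,\xi-\Delta t\,\xi$ cancels identically, so that
\begin{align*}
(U_{ext}-u)(x_i,t_n)=\bigl(2R_n^{2}-R_n^{1}\bigr)(x_i,t_n),\qquad (x_i,t_n)\in\overline{Q}_{\sigma,0}^{N,M}.
\end{align*}
Thus the extrapolated error is governed entirely by the two remainders, which is where the established bound enters.

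To finish, I would invoke the remainder estimate derived above, $|R_n^{k}(x_i,t_n)|\leq C\bigl(N^{-2}L^{2}+N^{-2}+\Delta t^{2}\bigr)$ for $k=1,2$, uniformly in $\varepsilon$. This bound follows because $R^{k}$ solves a discrete problem with $R^{k}=0$ on $\Gamma_{\sigma}^{N,kM}$ and with $L_\varepsilon^{N,kM}R^{k}_{n}$ equal to the truncation error of $U^{k}-u$ minus the $\xi^{k}$-contribution, the latter being $O(N^{-2}L^{2}+N^{-2}+\Delta t^{2})$, followed by an application of the discrete stability estimate of Lemma \ref{lm3.3} (equivalently the discrete minimum principle, Lemma \ref{lm3.2}). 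Combining through the triangle inequality,
\begin{align*}
\|U_{ext}-u\|_{\overline{Q}_{\sigma}^{N,M}}\leq 2\,\|R^{2}\|_{\overline{Q}_{\sigma}^{N,2M}}+\|R^{1}\|_{\overline{Q}_{\sigma}^{N,M}}\leq C\bigl(\Delta t^{2}+N^{-2}L^{2}+N^{-2}\bigr),
\end{align*}
which is the claimed estimate.

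The main obstacle is not the algebraic cancellation but the rigorous justification of the expansion \eqref{4.2} together with the $O(\Delta t^{2})$ control of the temporal part of the remainder. This demands a careful consistency analysis showing that the leading backward-Euler temporal defect of the scheme applied to $u$ is exactly reproduced by $\Delta t\,\xi^{k}$, so that subtracting it leaves a genuinely second-order-in-time residual, and that this survives both the method-of-steps treatment of the delay across each subinterval $[(r-1)\tau,r\tau]$ and the hybrid spatial discretization on the layer-adapted mesh. Uniformity in $\varepsilon$ rests on the derivative bounds of Theorem \ref{thm1} and on the barrier-function machinery of Lemmas \ref{lm3.4}--\ref{lm3.5}; the most delicate point is verifying that $\xi^{k}$ inherits enough regularity for its own discrete truncation error to be bounded $\varepsilon$-uniformly.
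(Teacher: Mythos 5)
Your proposal is correct and follows essentially the same route as the paper: the paper also obtains the result by combining the expansion \eqref{4.2} (with $\xi^{k}$ defined by \eqref{4.3}, which is independent of $k$, so the $O(\Delta t)$ terms cancel in $2U^{2}-U^{1}$) with the remainder bound $|R_n^{k}|\leq C(N^{-2}L^{2}+N^{-2}+\Delta t^{2})$ obtained from the discrete minimum principle. Your write-up merely makes explicit the cancellation and triangle-inequality step that the paper leaves implicit, and your closing caveat about rigorously justifying \eqref{4.2} applies equally to the paper's own argument.
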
 

\section{Numerical results}
In this section, we present the numerical results for two test problems to validate the theoretical results. They also verify the high accuracy and convergence rate of the proposed schemes.
\begin{problem}
	Consider the following singularly perturbed parabolic IBVP :-
	\begin{equation}
	\begin{cases}
	
	\left( \varepsilon \frac{\partial^2{u}}{\partial{x^2}}+x^p \frac{\partial{u}}{\partial{x}}-\frac{\partial{u}}{\partial{t}}-u\right)(x,t) = (0.5)u(x,t-1) + x^2-1, \hspace{0.5cm} \forall ~ (x,t)\in Q = \Omega \times (0,2],\\
	u(x,t)=(1-x)^2,\hspace{0.5cm} \forall ~ (x,t)\in [0,1] \times [-1,0],\\
	u(0,t)=1+t^2, \quad u(1,t)=0,\quad \forall ~t\in (0,2].
	
	\end{cases}
	\end{equation}
\end{problem}
\begin{problem}
	Consider the following singularly perturbed parabolic IBVP :-
	\begin{equation}
	\begin{cases}
	
	\left( \varepsilon \frac{\partial^2{u}}{\partial{x^2}}+x^p \frac{\partial{u}}{\partial{x}}-\frac{\partial{u}}{\partial{t}}-(x+p)u\right)(x,t) =-u(x,t-1) + p \exp (-t)(x^2-1),\\ \hspace{0.5cm} \forall ~ (x,t)\in Q = \Omega \times (0,2],\\
	u(x,t)=(1-x)^2,\hspace{0.5cm} \forall ~ (x,t)\in [0,1] \times [-1,0],\\
	u(0,t)=1+t^2, \quad u(1,t)=0,\quad \forall ~t\in (0,2].
	
	\end{cases}
	\end{equation}
\end{problem}
Since exact solutions of the given problems are not known, the performances of the proposed schemes are illustrated by using the double mesh principle to calculate the maximum pointwise error. The maximum pointwise error is defined as
\begin{align*}
E^{N,M}_\varepsilon = \| \widetilde{U}^{N,M}(x_i,t_j)- \widetilde{U}^{2N,2M}(x_i,t_j)\|_{\overline{Q}^{N,M}_{\sigma}},
\end{align*}
where $\widetilde{U}=U$ for hybrid scheme (\ref{3.1}) and $\widetilde{U}=U_{ext}$ if Richardson extrapolation is applied on the scheme (\ref{3.1}). The corresponding order of convergence $q_{\varepsilon}^{N,M}$ is computed as
\begin{align*}
q^{N,M}_\varepsilon = \frac{\ln\left( {{E^{N,M}_\varepsilon}/E^{2N,2M}_\varepsilon}\right)}{\ln{2}}.
\end{align*}
Also, the $ \varepsilon $-uniform maximum pointwise error $E^{N,M}$ is computed as
\begin{align*}
E^{N,M} = \max\limits_{\varepsilon}E^{N,M}_\varepsilon
\end{align*}
and the corresponding $ \varepsilon $-uniform order of convergence $q^{N,M}$ is given by
\begin{align*}
q^{N,M} = \frac{\ln{\left( {E^{N,M}}/E^{2N,2M}\right)}}{\ln{2}}.
\end{align*}
For various values of $\varepsilon$, $N$ and $M$ the computed maximum pointwise errors $E_{\varepsilon}^{N,M}$ and the corresponding order of convergence $q_{\varepsilon}^{N,M}$ for the considered problems are tabulated in Tables \ref{tb1} to \ref{tb6}.\\
 In Table \ref{tb1} and \ref{tb3} we  have given the results for upwind scheme on uniform mesh, upwind scheme on piecewise uniform Shishkin mesh and hybrid scheme (\ref{3.1}) on modified Shishkin mesh for Problem 1, 2, respectively. It can be seen that the uniform mesh do not work. The upwind scheme on Shishkin mesh has almost first order of convergence. The proposed hybrid scheme gives better result than the upwind scheme with Shishkin mesh. The numerical results computed using hybrid scheme show monotonically decreasing behaviour as $N$ increases which confirms the $\varepsilon$-uniform convergence of the hybrid scheme (\ref{3.1}). The order of convergence of hybrid scheme is not depicting the theoretical order of convergence of order two upto a logarithmic factor  as proved in Theorem \ref{thm3.1} as the error consists of two parts due to spatial and temporal discretization. The hybrid scheme improves the accuracy space but temporal order of convergence remains one. As a result, the numerical results display almost first order of convergence due to the much influence of the temporal error. Therefore, Richardson extrapolation is used to increase the order of convergence in time.\\
Tables \ref{tb2} and \ref{tb4} show that the use of Richardson extrapolation on hybrid scheme further improves the error and the rate of convergence. The resulting scheme provides almost second order of convergence in space and second order of convergence in time variable.\\
Tables \ref{tb5} and \ref{tb6} display the numerical results computed using Richardson extrapolation for Problem 1 and 2, respectively, with different values of $p$.
\section{Conclusions}
In this article, we proposed and analysed a higher order numerical scheme for the solution of singularly perturbed parabolic problems with time delay and degenerate coefficient. The proposed scheme is comprised of implicit Euler scheme for time discretization on uniform mesh and a hybrid scheme for space discretization on modified Shishkin mesh. Parameter uniform convergence of order one in time direction and order two upto a logarithmic factor in space direction is established for the proposed scheme (\ref{3.1}). Further, to improve the order of convergence in time direction Richardson extrapolation is employed in the time direction. The resulting scheme increase the order of convergence to two in time direction. The numerical experiments are presented to illustrate the theoretical results.  

\section*{Acknowledgements}
The first author wish to acknowledge UGC Non-NET Fellowships for financial support vide Ref.No. Sch./139/Non-NET/Ext-152/2018-19/67.

\begin{table}[h!]
\begin{center}
	\caption{The maximum pointwise errors $ E^{N,M}_\varepsilon $ and the corresponding order of convergence $q^{N,M}_\varepsilon $ for the Problem 1 using different schemes with $p=1$ and $M=N$.}
	\label{tb1}
	\resizebox{\textwidth}{!}{\begin{tabular}{ c  c  c  c  c  c  c c c}
		\hline
		\textbf{$ \varepsilon \downarrow $} & \textbf{Scheme}  & \textbf{$ N=32 $} & \textbf{$ N=64 $} & \textbf{$ N=128 $} & \textbf{$ N=256 $} & \textbf{$ N=512 $} & \textbf{$ N=1024 $} & \textbf{$ N=2048 $} \\
		\hline
		{$2^{-8}$} & \textbf{Upwind scheme on} & 7.061e-02 & 3.726e-02 & 1.964e-02 & 1.009e-02 & 5.122e-03 & 2.581e-03 & 1.296e-03\\ 
	    & \textbf{uniform mesh} & 9.224e-01 & 9.236e-01 & 9.617e-01 & 9.774e-01 & 9.887e-01 & 9.942e-01\\
	    \hline
	    & \textbf{Upwind scheme on} & 6.350e-02 & 3.726e-02 & 1.964e-02 & 1.009e-02 & 5.122e-03 & 2.581e-03 & 1.296e-03\\ 
	    & \textbf{Shishkin mesh} & 7.693e-01 & 9.236e-01 & 9.617e-01 & 9.774e-01 & 9.887e-01 & 9.942e-01\\
		\hline
		&\textbf{Hybrid scheme on}& 4.308e-03 & 2.056e-03 & 1.031e-03 & 5.177e-04 & 2.647e-04 & 1.352e-04 & 6.834e-05\\ 
		& \textbf{Shishkin mesh}& 1.068e+00 & 9.957e-01 & 9.936e-01 & 9.676e-01 & 9.694e-01 & 9.843e-01\\
		\hline
		{$2^{-12}$}   & \textbf{Upwind scheme} & 1.481e-01 & 1.390e-01 & 6.916e-02 & 3.640e-02 & 1.927e-02 & 9.902e-03 & 5.028e-03\\
		& \textbf{uniform mesh} & 9.131e-02 & 1.007e+00 & 9.259e-01 & 9.179e-01 & 9.604e-01 & 9.777e-01\\
		\hline
		& \textbf{Upwind scheme on} & 6.438e-02 & 3.929e-02 & 2.365e-02 & 1.385e-02 & 7.897e-03 & 4.424e-03 & 2.443e-03\\
		& \textbf{Shishkin mesh} & 7.125e-01 & 7.320e-01 & 7.720e-01 & 8.107e-01 & 8.360e-01 & 8.565e-01\\  
		\hline
		&\textbf{Hybrid scheme on}& 4.124e-03 & 2.097e-03 & 1.054e-03 & 5.295e-04 & 2.728e-04 & 1.395e-04 &	7.084e-05\\
		& \textbf{Shishkin mesh}& 9.760e-01 & 9.927e-01 & 9.925e-01 & 9.571e-01 & 9.670e-01 & 9.780e-01\\ 
		\hline
		{$2^{-16}$}  & \textbf{Upwind scheme} & 2.176e-02 & 6.828e-02 & 1.466e-01 & 1.379e-01 & 6.858e-02 & 3.610e-02 & 1.912e-02\\
		& \textbf{uniform mesh} & -1.649e+00 & -1.102e+00 & 8.809e-02 & 1.008e+00 & 9.258e-01 & 9.169e-01\\
		\hline
		& \textbf{ Upwind scheme on} & 6.444e-02 & 3.932e-02 & 2.368e-02 & 1.386e-02 & 7.905e-03 & 4.428e-03 & 2.445e-03\\
		& \textbf{Shishkin mesh} & 7.128e-01 & 7.318e-01 & 7.720e-01 & 8.107e-01 & 8.360e-01 & 8.566e-01\\
		\hline
		&\textbf{Hybrid scheme on}& 4.126e-03 & 2.099e-03 & 1.055e-03 & 5.303e-04 & 2.739e-04 & 1.402e-04 &	7.119e-05\\
		& \textbf{Shishkin mesh}& 9.752e-01 & 9.923e-01 & 9.924e-01 & 9.530e-01 & 9.664e-01 & 9.777e-01\\
		\hline
		{$2^{-20}$}  & \textbf{Upwind scheme} & 3.961e-03 & 6.240e-03 & 2.076e-02 & 6.750e-02 & 1.461e-01 & 1.376e-01 & 6.842e-02\\
		& \textbf{uniform mesh} & -6.558e-01 & -1.734e+00 & -1.701e+00 & -1.114e+00 & 8.623e-02 & 1.008e+00\\
		\hline
		& \textbf{ Upwind scheme on} & 6.444e-02 & 3.932e-02 & 2.368e-02 & 1.387e-02 & 7.905e-03 &	4.429e-03 & 2.446e-03\\
		& \textbf{Shishkin mesh} & 7.128e-01 & 7.317e-01 & 7.719e-01 & 8.106e-01 & 8.360e-01 & 8.566e-01\\
		\hline
		&\textbf{Hybrid scheme on}& 4.126e-03 & 2.099e-03 & 1.055e-03 & 5.304e-04 & 2.741e-04 & 1.403e-04 &
		7.124e-05\\
		& \textbf{Shishkin mesh}& 9.750e-01 & 9.922e-01 & 9.923e-01 & 9.523e-01 & 9.662e-01 & 9.776e-01\\
		\hline
		{$2^{-24}$}  & \textbf{Upwind scheme} & 3.961e-03 & 2.062e-03 & 1.843e-03 & 5.574e-03 & 2.037e-02 & 6.727e-02 & 1.460e-01\\
		& \textbf{uniform mesh} & 9.417e-01 & 1.618e-01 & -1.596e+00 & -1.870e+00 & -1.724e+00 & -1.118e+00\\
		\hline
		& \textbf{ Upwind scheme on} & 6.444e-02 & 3.932e-02 & 2.368e-02 & 1.387e-02 & 7.905e-03 &	4.429e-03 & 2.446e-03\\
		& \textbf{Shishkin mesh} & 7.128e-01 & 7.317e-01 & 7.719e-01 & 8.106e-01 & 8.360e-01 & 8.566e-01\\
		\hline
		&\textbf{Hybrid scheme on}& 4.126e-03 & 2.099e-03 & 1.055e-03 & 5.304e-04 & 2.741e-04 & 1.403e-04 &
		7.125e-05\\
		& \textbf{Shishkin mesh} &9.750e-01 & 9.922e-01 & 9.923e-01 & 9.521e-01 & 9.662e-01 & 9.776e-01\\
		\hline
		{$2^{-28}$}  & \textbf{Upwind scheme} & 3.961e-03 & 2.062e-03 & 1.057e-03 & 5.943e-04 & 1.476e-03 & 5.380e-03 & 2.027e-02\\
		& \textbf{uniform mesh} & 9.417e-01 & 9.635e-01 & 8.312e-01 & -1.312e+00 & -1.866e+00 & -1.913e+00\\
		\hline
		 & \textbf{ Upwind scheme on} & 6.444e-02 & 3.932e-02 & 2.368e-02 & 1.387e-02 & 7.906e-03 &	4.429e-03 & 2.446e-03\\
		& \textbf{Shishkin mesh} & 7.128e-01 & 7.317e-01 & 7.719e-01 & 8.106e-01 & 8.360e-01 & 8.566e-01\\
		\hline
		&\textbf{Hybrid scheme on}& 4.126e-03 & 2.099e-03 & 1.055e-03 & 5.304e-04 & 2.741e-04 & 1.403e-04 &
		7.126e-05\\
		& \textbf{Shishkin mesh}& 9.750e-01 & 9.922e-01 & 9.923e-01 & 9.521e-01 & 9.662e-01 & 9.776e-01\\
		\hline
		{$2^{-32}$}  & \textbf{Upwind scheme} & 3.961e-03 & 2.062e-03 & 1.057e-03 & 5.367e-04 & 2.702e-04 & 4.030e-04 & 1.378e-03\\
		& \textbf{uniform mesh} & 9.417e-01 & 9.635e-01 & 9.782e-01 & 9.900e-01 & -5.767e-01 & -1.774e+00\\
		\hline
		& \textbf{ Upwind scheme on} & 6.444e-02 & 3.932e-02 & 2.368e-02 & 1.387e-02 & 7.906e-03 &	4.429e-03 & 2.446e-03\\
		& \textbf{Shishkin mesh} & 7.128e-01 & 7.317e-01 & 7.719e-01 & 8.106e-01 & 8.360e-01 & 8.566e-01\\
		\hline
		&\textbf{Hybrid scheme on}& 4.126e-03 & 2.099e-03 & 1.055e-03 & 5.304e-04 & 2.741e-04 & 1.403e-04 &
		7.126e-05\\
		& \textbf{Shishkin mesh}& 9.750e-01 & 9.922e-01 & 9.923e-01 & 9.521e-01 & 9.662e-01 & 9.776e-01\\
		\hline
		{$2^{-36}$}  & \textbf{Upwind scheme} & 3.961e-03 & 2.062e-03 & 1.057e-03 & 5.367e-04 & 2.702e-04 & 1.356e-04 & 1.173e-04\\
		& \textbf{uniform mesh} & 9.417e-01 & 9.635e-01 & 9.782e-01 & 9.900e-01 & 9.953e-01 & 2.082e-01\\
		\hline
		& \textbf{ Upwind scheme on} & 6.444e-02 & 3.932e-02 & 2.368e-02 & 1.387e-02 & 7.906e-03 &	4.429e-03 & 2.446e-03\\
		& \textbf{Shishkin mesh} & 7.128e-01 & 7.317e-01 & 7.719e-01 & 8.106e-01 & 8.360e-01 & 8.566e-01\\
		\hline
		&\textbf{Hybrid scheme on}& 4.126e-03 & 2.099e-03 & 1.055e-03 & 5.304e-04 & 2.741e-04 & 1.403e-04 & 7.126e-05\\
		& \textbf{Shishkin mesh}& 9.750e-01 & 9.922e-01 & 9.923e-01 & 9.521e-01 & 9.662e-01 & 9.776e-01\\
		\hline
		{$2^{-40}$}  & \textbf{Upwind scheme} & 3.961e-03 & 2.062e-03 & 1.057e-03 & 5.367e-04 & 2.702e-04 & 1.356e-04 & 6.789e-05\\
		& \textbf{uniform mesh} & 9.417e-01 & 9.635e-01 & 9.782e-01 & 9.900e-01 & 9.953e-01 & 9.976e-01\\
		\hline
		& \textbf{ Upwind scheme on} & 6.444e-02 & 3.932e-02 & 2.368e-02 & 1.387e-02 & 7.906e-03 & 4.429e-03 & 2.446e-03\\
		& \textbf{Shishkin mesh} & 7.128e-01 & 7.317e-01 & 7.719e-01 & 8.106e-01 & 8.360e-01 & 8.566e-01\\
		\hline
		&\textbf{Hybrid scheme on}& 4.126e-03 & 2.099e-03 & 1.055e-03 & 5.304e-04 & 2.741e-04 & 1.403e-04 & 7.126e-05\\
		& \textbf{Shishkin mesh}& 9.750e-01 & 9.922e-01 & 9.923e-01 & 9.521e-01 & 9.662e-01 & 9.776e-01\\
		\hline
		{$ \mathbf{E^{N,M}}$} &\textbf{Upwind scheme on} & \textbf{1.481e-01} & \textbf{1.390e-01} & \textbf{1.466e-01} & \textbf{1.379e-01} & \textbf{1.461e-01} & \textbf{1.376e-01} & \textbf{1.460e-01} \\
	    & \textbf{uniform mesh} & \textbf{9.131e-02} & \textbf{-7.659e-02} & \textbf{8.809e-02} & \textbf{-8.307e-02} & \textbf{8.623e-02}  & \textbf{-8.492e-02} \\
	    \hline
	    {$ \mathbf{E^{N,M}}$}& \textbf{ Upwind scheme on} & \textbf{6.444e-02} & \textbf{3.932e-02} & \textbf{2.368e-02} & \textbf{1.387e-02} & \textbf{7.906e-03} & \textbf{4.429e-03} & \textbf{2.446e-03} \\
	    & \textbf{Shishkin mesh} & \textbf{7.128e-01} & \textbf{7.317e-01} & \textbf{7.719e-01} & \textbf{8.106e-01} & \textbf{8.360e-01}  & \textbf{8.566e-01}\\
	    \hline
		{$ \mathbf{E^{N,M}}$} &\textbf{ Hybrid scheme }& \textbf{4.308e-03} & \textbf{2.099e-03} & \textbf{1.055e-03} & \textbf{5.304e-03} & \textbf{2.741e-04} & \textbf{1.403e-04} & \textbf{7.126e-05} \\
		& \textbf{ on Shishkin mesh} (\ref{3.1}) & \textbf{1.0373} & \textbf{9.922e-01} & \textbf{9.923e-01} & \textbf{9.521e-01} & \textbf{9.662e-01} & \textbf{9.776e-01} \\
		\hline

	\end{tabular}}
	\end{center}
	\end{table}

\newpage

\begin{table}[h!]
\begin{center}
\caption{The maximum pointwise errors $ E^{N,M}_\varepsilon $ and the corresponding order of convergence $q^{N,M}_\varepsilon $ for the Problem 1 using the Richardson extrapolation on the hybrid scheme (\ref{3.1}) with $p=1$ and $M=N$.}
\label{tb2}
	\resizebox{\textwidth}{!}{\begin{tabular}{ c  c  c  c  c  c  c c }	
		\hline
		\textbf{$ \varepsilon \downarrow $}   & \textbf{$ N=32 $} & \textbf{$ N=64 $} & \textbf{$ N=128 $} & \textbf{$ N=256 $} & \textbf{$ N=512 $} & \textbf{$ N=1024 $} & \textbf{$ N=2048 $} \\
		\hline
		{$2^{-8}$} & 6.397e-03 & 2.258e-03 & 7.740e-04 & 2.414e-04 & 6.028e-05 & 1.507e-05 & 4.443e-06\\ 
		& 1.502e+00 & 1.545e+00 & 1.681e+00 & 2.001e+00 & 2.000e+00 & 1.762e+00\\
		\hline
		{$2^{-12}$} & 6.352e-03 & 2.246e-03 & 7.705e-04 & 2.561e-04 & 8.279e-05 & 2.612e-05 & 8.072e-06\\
		& 1.500e+00 & 1.543e+00 & 1.589e+00 & 1.629e+00 & 1.664e+00 & 1.694e+00\\ 
		\hline
		{$2^{-16}$} & 6.387e-03 & 2.248e-03 & 7.695e-04 & 2.558e-04 & 8.274e-05 & 2.611e-05 & 8.069e-06\\
		& 1.506e+00 & 1.547e+00 & 1.589e+00 & 1.629e+00 & 1.664e+00 & 1.694e+00\\
		\hline
		{$2^{-20}$} & 6.407e-03 & 2.258e-03 & 7.726e-04 & 2.563e-04 & 8.275e-05 & 2.610e-05 & 8.068e-06\\
		& 1.505e+00 & 1.547e+00 & 1.592e+00 & 1.631e+00 & 1.665e+00 & 1.694e+00\\
		\hline
		{$2^{-24}$} & 6.409e-03 & 2.261e-03 & 7.741e-04 & 2.568e-04 & 8.292e-05 & 2.613e-05 & 8.069e-06\\
		& 1.503e+00 & 1.546e+00 & 1.592e+00 & 1.631e+00 & 1.666e+00 & 1.695e+00\\
		\hline
		{$2^{-28}$} & 6.410e-03 & 2.261e-03 & 7.745e-04 & 2.570e-04 & 8.299e-05 & 2.616e-05 & 8.077e-06\\
		& 1.503e+00 & 1.546e+00 & 1.591e+00 & 1.631e+00 & 1.666e+00 & 1.695e+00\\
		\hline
		{$2^{-32}$} & 6.410e-03 & 2.262e-03 & 7.745e-04 & 2.570e-04 & 8.301e-05 & 2.616e-05 & 8.080e-06\\
		& 1.503e+00 & 1.546e+00 & 1.591e+00 & 1.631e+00 & 1.666e+00 & 1.695e+00\\
		\hline
		{$2^{-36}$} & 6.410e-03 & 2.262e-03 & 7.746e-04 & 2.571e-04 & 8.301e-05 & 2.617e-05 & 8.081e-06\\
		& 1.503e+00 & 1.546e+00 & 1.591e+00 & 1.631e+00 & 1.666e+00 & 1.695e+00\\
		\hline
		{$2^{-40}$} & 6.410e-03 & 2.262e-03 & 7.746e-04 & 2.571e-04 & 8.301e-05 & 2.617e-05 & 8.081e-06\\
		& 1.503e+00 & 1.546e+00 & 1.591e+00 & 1.631e+00 & 1.666e+00 & 1.695e+00\\
		\hline
		$ \mathbf{E^{N,M}}$ \textbf{(hybrid scheme (}\ref{3.1}) \\ \textbf{with extrapolation)} & \textbf{6.410e-03} & \textbf{2.262e-03} & \textbf{7.746e-04} & \textbf{2.571e-04} & \textbf{8.301e-05} & \textbf{2.617e-05} & \textbf{8.081e-06} \\
		\hline
		$ \mathbf{q^{N,M}}$ \textbf{(hybrid scheme} (\ref{3.1}) \\ \textbf{with extrapolation)} & \textbf{1.503e+00} & \textbf{1.546e+00} & \textbf{1.591e+00} & \textbf{1.631e+00} & \textbf{1.666e+00} & \textbf{1.695e+00} \\
		\hline
		
	\end{tabular}}
\end{center}
\end{table}

\newpage

\begin{table}[h!]
\begin{center}
\caption{The maximum pointwise errors $ E^{N,M}_\varepsilon $ and the corresponding order of convergence $q^{N,M}_\varepsilon $ for the Problem 2 using different schemes with $p=1$ and $M=N$.}
\label{tb3}
	\resizebox{\textwidth}{!}{\begin{tabular}{ c  c  c  c  c  c  c c c}
		\hline
		\textbf{$ \varepsilon \downarrow $}  & \textbf{Scheme}  & \textbf{$ N=32 $} & \textbf{$ N=64 $} & \textbf{$ N=128 $} & \textbf{$ N=256 $} & \textbf{$ N=512 $} & \textbf{$ N=1024 $} & \textbf{$ N=2048 $} \\
		\hline
		{$2^{-8}$} & \textbf{Upwind scheme on} & 6.278e-02 & 3.256e-02 & 1.716e-02 & 8.789e-03 & 4.454e-03 & 2.243e-03 & 1.126e-03\\ 
	    &\textbf{uniform mesh}& 9.474e-01 & 9.243e-01 & 9.649e-01 & 9.805e-01 & 9.896e-01 & 9.947e-01\\
		\hline
		& \textbf{Upwind scheme on} & 6.278e-02 & 3.256e-02 & 1.716e-02 & 8.789e-03 & 4.454e-03 & 2.243e-03 & 1.126e-03\\ 
		& \textbf{ Shishkin mesh} & 9.474e-01 & 9.243e-01 & 9.649e-01 & 9.805e-01 & 9.896e-01 & 9.947e-01\\
		\hline
		 & \textbf{Hybrid scheme on} & 1.468e-02 & 3.811e-03 & 1.817e-03 & 9.210e-04 & 4.637e-04 & 2.326e-04 & 1.165e-04\\ 
		& \textbf{ Shishkin mesh} & 1.946e+00 & 1.068e+00 & 9.806e-01 & 9.901e-01 & 9.950e-01 & 9.975e-01\\
		\hline
		{$2^{-12}$} & \textbf{Upwind scheme on} & 1.203e-01 & 1.131e-01 & 5.492e-02 & 2.879e-02 & 1.516e-02 & 7.794e-03 & 3.959e-03\\
		&\textbf{Uniform mesh}& 8.849e-02 & 1.043e+00 & 9.315e-01 & 9.251e-01 & 9.601e-01 & 9.773e-01\\ 
		\hline
		& \textbf{Upwind scheme on} & 1.039e-01 & 5.739e-02 & 3.595e-02 & 2.148e-02 & 1.253e-02 & 7.108e-03 & 3.959e-03\\
		& \textbf{ Shishkin mesh} & 8.570e-01 & 6.746e-01 & 7.429e-01 & 7.779e-01 & 8.177e-01 & 8.443e-01\\ 
		\hline
		& \textbf{Hybrid scheme on} & 2.615e-02 & 9.482e-03 & 3.466e-03 & 1.329e-03 & 5.531e-04 & 2.725e-04 & 1.365e-04\\
		& \textbf{ Shishkin mesh} & 1.464e+00 & 1.452e+00 & 1.382e+00 & 1.265e+00 & 1.022e+00 & 9.976e-01\\ 
		\hline
		{$2^{-16}$} & \textbf{Upwind scheme on} & 1.663e-02 & 5.109e-02 & 1.165e-01 & 1.119e-01 & 5.427e-02 & 2.849e-02 & 1.502e-02\\
		&\textbf{Uniform mesh}& -1.619e+00 & -1.189e+00 & 5.864e-02 & 1.044e+00 & 9.297e-01 & 9.232e-01\\
		\hline
		& \textbf{Upwind scheme on} & 1.023e-01 & 5.616e-02 & 3.534e-02 & 2.115e-02 & 1.236e-02 & 7.013e-03 & 3.910e-03\\
		& \textbf{ Shishkin mesh} & 8.648e-01 & 6.681e-01 & 7.405e-01 & 7.747e-01 & 8.182e-01 & 8.428e-01\\
		\hline
		& \textbf{Hybrid scheme on} & 2.646e-02 & 9.559e-03 & 3.666e-03 & 1.354e-03 & 5.732e-04 & 2.800e-04 & 1.401e-04\\
		& \textbf{ Shishkin mesh} & 1.469e+00 & 1.383e+00 & 1.437e+00 & 1.240e+00 & 1.034e+00 & 9.986e-01\\
		\hline
		{$2^{-20}$}& \textbf{Upwind scheme on} & 1.121e-02 & 6.017e-03 & 1.416e-02 & 5.130e-02 & 1.169e-01 & 1.119e-01 & 5.430e-02\\
		&\textbf{Uniform mesh}& 8.972e-01 & -1.235e+00 & -1.857e+00 & -1.188e+00 & 6.255e-02 & 1.044e+00\\
		\hline
		 & \textbf{Upwind scheme on} & 1.022e-01 & 5.612e-02 & 3.533e-02 & 2.115e-02 & 1.236e-02 & 7.010e-03 & 3.909e-03\\
		& \textbf{ Shishkin mesh} & 8.645e-01 & 6.675e-01 & 7.405e-01 & 7.747e-01 & 8.183e-01 & 8.428e-01\\
		\hline
		& \textbf{Hybrid scheme on} & 2.647e-02 & 9.530e-03 & 3.476e-03 & 1.590e-03 & 7.209e-04 & 3.157e-04 & 1.420e-04\\
		& \textbf{ Shishkin mesh} & 1.474e+00 & 1.455e+00 & 1.129e+00 & 1.141e+00 & 1.191e+00 & 1.153e+00\\
		\hline
		{$2^{-24}$} & \textbf{Upwind scheme on} & 1.121e-02 & 6.017e-03 & 3.162e-03 & 3.208e-03 & 1.496e-02 & 5.180e-02 & 1.171e-01\\
		&\textbf{Uniform mesh}& 8.972e-01 & 9.285e-01 & -2.084e-02 & -2.221e+00 & -1.792e+00 & -1.177e+00\\
		\hline
		& \textbf{Upwind scheme on} & 1.022e-01 & 5.613e-02 & 3.534e-02 & 2.115e-02 & 1.236e-02 & 7.012e-03 & 3.909e-03\\
		& \textbf{ Shishkin mesh} & 8.643e-01 & 6.675e-01 & 7.405e-01 & 7.747e-01 & 8.183e-01 & 8.428e-01\\
		\hline
		& \textbf{Hybrid scheme on} & 2.646e-02 & 9.519e-03 & 3.480e-03 & 1.335e-03 & 6.125e-04 & 3.029e-04 & 1.450e-04\\
		& \textbf{ Shishkin mesh} & 1.475e+00 & 1.452e+00 & 1.382e+00 & 1.124e+00 & 1.016e+00 & 1.063e+00\\
		\hline
		{$2^{-28}$} & \textbf{Upwind scheme on} & 1.121e-02 & 6.017e-03 & 3.162e-03 & 1.637e-03 & 8.373e-04 & 3.785e-03 & 1.528e-02\\
		&\textbf{Uniform mesh}& 8.972e-01 & 9.285e-01 & 9.499e-01 & 9.670e-01 & -2.177e+00 & -2.013e+00\\
		\hline
		& \textbf{Upwind scheme on} & 1.022e-01 & 5.614e-02 & 3.534e-02 & 2.115e-02 & 1.236e-02 & 7.012e-03 & 3.910e-03\\
		& \textbf{ Shishkin mesh} & 8.642e-01 & 6.675e-01 & 7.405e-01 & 7.748e-01 & 8.183e-01 & 8.428e-01\\
		\hline
		& \textbf{Hybrid scheme on} & 2.646e-02 & 9.515e-03 & 3.654e-03 & 1.528e-03 & 5.937e-04 & 2.816e-04 & 1.411e-04\\
		& \textbf{ Shishkin mesh} & 1.475e+00 & 1.381e+00 & 1.258e+00 & 1.364e+00 & 1.076e+00 & 9.968e-01\\
		\hline
		{$2^{-32}$} & \textbf{Upwind scheme on} & 1.121e-02 & 6.017e-03 & 3.162e-03 & 1.637e-03 & 8.373e-04 & 4.247e-04 & 8.944e-04\\
		&\textbf{Uniform mesh}& 8.972e-01 & 9.285e-01 & 9.499e-01 & 9.670e-01 & 9.791e-01 & -1.074e+00\\
		\hline
		& \textbf{Upwind scheme on} & 1.022e-01 & 5.614e-02 & 3.535e-02 & 2.115e-02 & 1.236e-02 & 7.012e-03 & 3.910e-03\\
		& \textbf{ Shishkin mesh} & 8.642e-01 & 6.675e-01 & 7.405e-01 & 7.748e-01 & 8.183e-01 & 8.428e-01\\
		\hline
		& \textbf{Hybrid scheme on} & 2.645e-02 & 9.514e-03 & 3.671e-03 & 1.472e-03 & 6.550e-04 & 2.910e-04 & 1.406e-04\\
		& \textbf{ Shishkin mesh} & 1.475e+00 & 1.374e+00 & 1.319e+00 & 1.168e+00 & 1.171e+00 & 1.049e+00\\
		\hline
		{$2^{-36}$} & \textbf{Upwind scheme on} & 1.121e-02 & 6.017e-03 & 3.162e-03 & 1.637e-03 & 8.373e-04 & 4.247e-04 & 2.143e-04\\
		&\textbf{Uniform mesh}& 8.972e-01 & 9.285e-01 & 9.499e-01 & 9.670e-01 & 9.791e-01 & 9.872e-01\\
		\hline
		& \textbf{Upwind scheme on} & 1.022e-01 & 5.614e-02 & 3.535e-02 & 2.115e-02 & 1.236e-02 & 7.012e-03 & 3.910e-03\\
		& \textbf{ Shishkin mesh} & 8.642e-01 & 6.675e-01 & 7.405e-01 & 7.748e-01 & 8.183e-01 & 8.428e-01\\
		\hline
		& \textbf{Hybrid scheme on} & 2.645e-02 & 9.514e-03 & 3.675e-03 & 1.473e-03 & 6.270e-04 & 2.924e-04 & 1.416e-04\\
		& \textbf{ Shishkin mesh} & 1.475e+00 & 1.372e+00 & 1.319e+00 & 1.232e+00 & 1.101e+00 & 1.046e+00\\
		\hline
		{$2^{-40}$} & \textbf{Upwind scheme on} & 1.121e-02 & 6.017e-03 & 3.162e-03 & 1.637e-03 & 8.373e-04 & 4.247e-04 & 2.143e-04\\
		&\textbf{Uniform mesh}& 8.972e-01 & 9.285e-01 & 9.499e-01 & 9.670e-01 & 9.791e-01 & 9.872e-01\\
		\hline
		 & \textbf{Upwind scheme on} & 1.022e-01 & 5.614e-02 & 3.535e-02 & 2.115e-02 & 1.236e-02 & 7.012e-03 & 3.910e-03\\
		& \textbf{ Shishkin mesh} & 8.642e-01 & 6.675e-01 & 7.405e-01 & 7.748e-01 & 8.183e-01 & 8.428e-01\\
		\hline
		& \textbf{Hybrid scheme on} & 2.645e-02 & 9.514e-03 & 3.677e-03 & 1.473e-03 & 6.270e-04 & 2.865e-04 & 1.417e-04\\
		& \textbf{ Shishkin mesh} & 1.475e+00 & 1.372e+00 & 1.320e+00 & 1.232e+00 & 1.130e+00 & 1.016e+00\\
		\hline
		{$ \mathbf{E^{N,M}}$} &\textbf{Upwind scheme on} & \textbf{1.203e-01} & \textbf{1.131e-01} & \textbf{1.165e-01} & \textbf{1.119e-01} & \textbf{1.169e-01} & \textbf{1.119e-01} & \textbf{1.171e-01} \\
		&\textbf{Uniform mesh} & \textbf{8.849e-02} & \textbf{-4.264e-02} & \textbf{5.864e-02} & \textbf{-6.339e-02} & \textbf{6.255e-02}  & \textbf{-6.519e-02} \\
		\hline
		{$ \mathbf{E^{N,M}}$} &\textbf{Upwind scheme } & \textbf{1.039e-01} & \textbf{5.739e-02} & \textbf{3.595e-02} & \textbf{2.148e-02} & \textbf{1.253e-02} & \textbf{7.108e-03} & \textbf{3.959e-03} \\
		& \textbf{ Shishkin mesh} & \textbf{8.570e-01} & \textbf{6.746e-01} & \textbf{7.429e-01} & \textbf{7.779e-01} & \textbf{8.177e-01}  & \textbf{8.443e-01} \\
		\hline
		{$ \mathbf{E^{N,M}}$}& \textbf{ hybrid scheme on} & \textbf{2.647e-02} & \textbf{9.559e-03} & \textbf{3.677e-03} & \textbf{1.590e-03} & \textbf{7.209e-04} & \textbf{3.157e-04} & \textbf{1.450e-04} \\
		&\textbf{Shishkin mesh} (\ref{3.1}) & \textbf{1.470e+00} & \textbf{1.378e+00} & \textbf{1.209e+00} & \textbf{1.141e+00} & \textbf{1.191e+00} & \textbf{1.122e+00} \\
		\hline
		
	\end{tabular}}
	\end{center}
	\end{table}

\newpage
 
\begin{table}[h!]
\begin{center}
\caption{The maximum pointwise errors $ E^{N,M}_\varepsilon $ and the corresponding order of convergence $q^{N,M}_\varepsilon $ for the Problem 2 using the Richardson extrapolation on the hybrid scheme (\ref{3.1}) with $p=1$ and $M=N$.}
\label{tb4}
	\resizebox{\textwidth}{!}{\begin{tabular}{ c  c  c  c  c  c  c c }
		\hline
		\textbf{$ \varepsilon \downarrow $}   & \textbf{$ N=32 $} & \textbf{$ N=64 $} & \textbf{$ N=128 $} & \textbf{$ N=256 $} & \textbf{$ N=512 $} & \textbf{$ N=1024 $} & \textbf{$ N=2048 $} \\
		\hline
		{$2^{-8}$} & 1.347e-02 & 3.252e-03 & 8.099e-04 & 2.023e-04 & 5.054e-05 & 1.999e-05 & 9.089e-06\\ 
		& 2.051e+00 & 2.005e+00 & 2.001e+00 & 2.001e+00 & 1.338e+00 & 1.137e+00\\
		\hline
		{$2^{-12}$} & 2.301e-02 & 8.141e-03 & 2.716e-03 & 9.060e-04 & 2.917e-04 & 9.194e-05 & 2.838e-05\\
		& 1.499e+00 & 1.584e+00 & 1.584e+00 & 1.635e+00 & 1.666e+00 & 1.696e+00\\ 
		\hline
		{$2^{-16}$} & 2.298e-02 & 8.123e-03 & 2.711e-03 & 9.035e-04 & 2.910e-04 & 9.170e-05 & 2.831e-05\\
		& 1.500e+00 & 1.583e+00 & 1.585e+00 & 1.634e+00 & 1.666e+00 & 1.696e+00\\
		\hline
		{$2^{-20}$} & 2.299e-02 & 8.126e-03 & 2.712e-03 & 9.031e-04 & 2.909e-04 & 9.166e-05 & 2.830e-05\\
		& 1.500e+00 & 1.583e+00 & 1.586e+00 & 1.634e+00 & 1.666e+00 & 1.696e+00\\
		\hline
		{$2^{-24}$} & 2.299e-02 & 8.127e-03 & 2.713e-03 & 9.031e-04 & 2.909e-04 & 9.165e-05 & 2.829e-05\\
		& 1.500e+00 & 1.583e+00 & 1.587e+00 & 1.634e+00 & 1.666e+00 & 1.696e+00\\
		\hline
		{$2^{-28}$} & 2.299e-02 & 8.127e-03 & 2.713e-03 & 9.031e-04 & 2.909e-04 & 9.165e-05 & 2.829e-05\\
		& 1.500e+00 & 1.583e+00 & 1.587e+00 & 1.634e+00 & 1.666e+00 & 1.696e+00\\
		\hline
		{$2^{-32}$} & 2.299e-02 & 8.127e-03 & 2.713e-03 & 9.031e-04 & 2.909e-04 & 9.165e-05 & 2.829e-05\\
		& 1.500e+00 & 1.583e+00 & 1.587e+00 & 1.634e+00 & 1.666e+00 & 1.696e+00\\
		\hline
		{$2^{-36}$} & 2.299e-02 & 8.127e-03 & 2.713e-03 & 9.031e-04 & 2.909e-04 & 9.165e-05 & 2.829e-05\\
		& 1.500e+00 & 1.583e+00 & 1.587e+00 & 1.634e+00 & 1.666e+00 & 1.696e+00\\
		\hline
		{$2^{-40}$} & 2.299e-02 & 8.127e-03 & 2.713e-03 & 9.031e-04 & 2.909e-04 & 9.165e-05 & 2.829e-05\\
		& 1.500e+00 & 1.583e+00 & 1.587e+00 & 1.634e+00 & 1.666e+00 & 1.696e+00\\
		\hline
		$ \mathbf{E^{N,M}}$ \textbf{(hybrid scheme (}\ref{3.1}) \\ \textbf{with extrapolation)} & \textbf{2.301e-02} & \textbf{8.141e-03} & \textbf{2.716e-03} & \textbf{9.060e-04} & \textbf{2.917e-04} & \textbf{9.194e-05} & \textbf{2.838e-05} \\
		\hline
		$ \mathbf{q^{N,M}}$ \textbf{(hybrid scheme} (\ref{3.1}) \\ \textbf{with extrapolation)} & \textbf{1.499e+00} & \textbf{1.584e+00} & \textbf{1.584e+00} & \textbf{1.635e+00} & \textbf{1.666e+00} & \textbf{1.696e+00} \\
		\hline
		
	\end{tabular}
}\end{center}
\end{table}

\begin{table}[h!]
\begin{center}
\caption{The $\varepsilon$-uniform maximum pointwise errors $ E^{N,M} $ and the corresponding order of convergence $q^{N,M} $ for the Problem 1 using the Richardson extrapolation on the hybrid scheme (\ref{3.1}) for different values of $p$ and $M=N$.}
\label{tb5}
	\resizebox{\textwidth}{!}{\begin{tabular}{ c  c  c  c  c  c  c c }
		\hline
		\textbf{$ p \downarrow $}   & \textbf{$ N=32 $} & \textbf{$ N=64 $} & \textbf{$ N=128 $} & \textbf{$ N=256 $} & \textbf{$ N=512 $} & \textbf{$ N=1024 $} & \textbf{$ N=2048 $} \\
		\hline 
		{$3$} & 1.014e-02 & 3.724e-03 & 1.293e-03 & 4.318e-04 & 1.396e-04 & 4.402e-05 & 1.360e-05\\ 
		& 1.445e+00 & 1.526e+00 & 1.582e+00 & 1.629e+00 & 1.665e+00 & 1.695e+00\\
		\hline
		{$6$} & 1.183e-02 & 4.284e-03 & 1.515e-03 & 6.023e-04 & 2.183e-04 & 7.572e-05 & 2.527e-05\\
		& 1.465e+00 & 1.500e+00 & 1.331e+00 & 1.464e+00 & 1.528e+00 & 1.583e+00\\ 
		\hline
		
	\end{tabular}
}\end{center}
\end{table}

\begin{table}[h!]
\begin{center}
\caption{The $\varepsilon$-uniform maximum pointwise errors $ E^{N,M} $ and the corresponding order of convergence $q^{N,M} $ for the Problem 2 using the Richardson extrapolation on the hybrid scheme (\ref{3.1}) for different values of $p$ and $M=N$.}
\label{tb6}
	\resizebox{\textwidth}{!}{\begin{tabular}{ c  c  c  c  c  c  c c }
		\hline
		\textbf{$ p \downarrow $}   & \textbf{$ N=32 $} & \textbf{$ N=64 $} & \textbf{$ N=128 $} & \textbf{$ N=256 $} & \textbf{$ N=512 $} & \textbf{$ N=1024 $} & \textbf{$ N=2048 $} \\
		\hline 
		{$2$} & 1.347e-02 & 4.914e-03 & 1.714e-03 & 5.728e-04 & 1.852e-04 & 5.843e-05 & 1.805e-05\\ 
		& 1.454e+00 & 1.519e+00 & 1.582e+00 & 1.629e+00 & 1.664e+00 & 1.695e+00\\
		\hline
		{$5$} & 2.752e-02 & 1.070e-02 & 3.766e-03 & 1.259e-03 & 4.080e-04 & 1.287e-04 & 3.977e-05\\
		& 1.363e+00 & 1.506e+00 & 1.581e+00 & 1.626e+00 & 1.664e+00 & 1.695e+00\\ 
		\hline
		
	\end{tabular}
}\end{center}
\end{table}

\newpage


\begin{thebibliography}{100}
\bibitem{ansari} A. R. Ansari, S. A. Bakr, and G. I. Shishkin, A parameter-robust finite difference method for singularly perturbed delay parabolic partial differential equations, J. Comput. Appl. Math. 205(1) (2007), pp. 552-566.
		
\bibitem{das1} A. Das and S. Natesan, Uniformly convergent hybrid numerical scheme for singularly perturbed delay parabolic convection–diffusion problems on Shishkin mesh, Appl. Math. Comput. 271 (2015), pp. 168-186.

\bibitem{das2} A. Das and S. Natesan, Second-order uniformly convergent numerical method for singularly perturbed delay parabolic partial differential equations, Inter. J. of Comput. Math. 95(3) (2018), pp. 490-510.
		
\bibitem{gowrishankar1} S. Gowrisankar and S. Natesan, Robust numerical scheme for singularly perturbed delay parabolic initial-boundary- value problems on equidistributed grids, Electron. Trans. Numer. Anal. 41 (2014), pp. 376-395.

\bibitem{gowrishankar2}S. Gowrisankar and S. Natesan, $\varepsilon$- Uniformly convergent numerical scheme for singularly perturbed delay parabolic partial differential equations, Int. J. Comput. Math. 94 (2017), pp. 902-921.

\bibitem{Kaushik1} A. Kaushik and M. Sharma, Convergence analysis of weighted difference approximations on piecewise uniform grids to a class of singularly perturbed functional differential equations, J Optim. Theory Appl., 155(1) (2012), pp. 252-272.

\bibitem{Kaushik2} A. Kaushik and M. Sharma, A robust numerical approach for singularly perturbed time delayed parabolic partial differential equations, Computational Mathematics and modeling, 23(1) (2012), pp. 96-106. 

\bibitem{sunil1} S. Kumar and M. Kumar, High order parameter-uniform discretization for singularly perturbed parabolic partial differential equations with time delay, Comput. Math. Appl. 68 (2014), pp. 1355-1367.

\bibitem{MR0241822}
O.~A. Ladyzenskaja, V.~A. Solonnikov, and N.~N. Ural'tseva.
\newblock {\em Linear and quasilinear equations of parabolic type}.
\newblock Translated from the Russian by S. Smith. Translations of Mathematical
Monographs, Vol. 23. American Mathematical Society, Providence, R.I., 1968.

\bibitem{MR0219861} M.~H. Protter and H.~F. Weinberger.
\newblock {\em Maximum principles in differential equations}.
\newblock  Springer Science $\&$ Business Media, 2012. 

\bibitem{Salama1}  A. A. Salama and  D. G. Al-Amery, A higher order uniformly convergent method for singularly perturbed delay parabolic partial differential equations, Inter. J. of Comput. Math. 94(12) (2017), pp. 2520-2546.

\bibitem{Joginder}  J. Singh, S. Kumar and M. Kumar, A domain decomposition
method for solving singularly perturbed perturbed parabolic
parabolic reaction-diffusion reaction-diffusion problems singularly problems with time delay. Numer. Methods Partial Differential Eq. 34(5) (2018), pp. 1849-1866. 

\bibitem{MR2201992}
G.~I. Shishkin and L.~P. Shishkina.
\newblock Richardson's method of high-order accuracy for a quasilinear
singularly perturbed elliptic reaction-diffusion equation.
\newblock {\em Differ. Uravn.}, 41(7) (2005), pp. 980-989.

\end{thebibliography}
\end{document}